\DeclareFontFamily{U}{matha}{\hyphenchar\font45}
\DeclareFontShape{U}{matha}{m}{n}{
	<5> <6> <7> <8> <9> <10> gen * matha
	<10.95> matha10 <12> <14.4> <17.28> <20.74> <24.88> matha12
}{}
\DeclareSymbolFont{matha}{U}{matha}{m}{n}
\DeclareMathSymbol{\Lt}{3}{matha}{"CE}
\DeclareMathSymbol{\Gt}{3}{matha}{"CF}
\DeclareSymbolFont{mathc}{OML}{txmi}{m}{it}% txfonts
\DeclareMathSymbol{\varvv}{\mathord}{mathc}{118} 
\DeclareMathSymbol{\varww}{\mathord}{mathc}{119} 
\DeclareMathSymbol{\varnu}{\mathord}{mathc}{"17}
\DeclareSymbolFont{mathd}{OML}{ztmcm}{m}{it}
\DeclareMathSymbol{\varalpha}{\mathord}{mathd}{11}
\DeclareMathSymbol{\vvepsilon}{\mathord}{mathd}{15}
\def\vepsilon{\scalebox{0.87}{$\vvepsilon$}}
\def\sepsilon{\scalebox{0.62}{$\vvepsilon$}}
\def\ssepsilon{\scalebox{0.5}{$\vvepsilon$}}
\newcommand{\BC}{{\mathbb {C}}} 
 \newcommand{\BH}{{\mathbb {H}}}
\newcommand{\BQ}{{\mathbb {Q}}} \newcommand{\BR}{{\mathbb {R}}}
 \newcommand{\BZ}{{\mathbb {Z}}}
 \newcommand{\RN}{{\mathrm {N}}}
\def\frO{\text{$\text{\usefont{U}{BOONDOX-cal}{m}{n}O}$}\hskip 1pt}
\def\frOO{\text{$\text{\usefont{U}{BOONDOX-cal}{m}{n}O}$} }
\newcommand{\GL}{{\mathrm {GL}}} \newcommand{\PGL}{{\mathrm {PGL}}}
\newcommand{\SL}{{\mathrm {SL}}} \newcommand{\PSL}{{\mathrm {PSL}}}
 \newcommand{\Tr}{{\mathrm{Tr}}}
\newcommand{\ds}{\displaystyle}
\newcommand{\sstyle}{\scriptstyle}
\newcommand{\ra}{\rightarrow}
\def\fra{\mathfrak{a}}
\def\-{^{-1}}
\def\mod{\mathrm{mod}\ }
\def\sumx{\sideset{}{^\star}\sum}
\def\tw{\textit{w}}
\def\lp {\left (}
\def\rp {\right )}
\def\boldJ {\boldsymbol J}
\renewcommand{\Im}{{\mathrm{Im} }}
\renewcommand{\Re}{{\mathrm{Re} }}
\def\tw{\textit{w}} 
\def\shskip{\hskip 0.5 pt}
\def\frd{\mathfrak{d}}
\def\frm{\mathfrak{m}}
\def\frn{\mathfrak{n}}
\def\fp{\mathfrak{p}}
\def\fq{\mathfrak{q}}
\def\frv{\mathfrak{v}}
\def\Avg{\mathrm{Avg}}
\def\ssstyle{\scriptscriptstyle}
\def\SSB{\text{\mbox{\larger[1]$\text{\usefont{U}{dutchcal}{m}{n}B}$}}}
\def\SB{\raisebox{- 2 \depth}{$\SSB$}}
\def\SSD{\text{\mbox{\larger[1]$\text{\usefont{U}{dutchcal}{m}{n}D}$}}}
\def\SD{\raisebox{- 2 \depth}{$\SSD$}}
\def\SSP{\text{\mbox{\larger[1]$\text{\usefont{U}{dutchcal}{m}{n}P}$}}}
\def\SP{\raisebox{- 2 \depth}{$\SSP$}}
\def\SSE{\text{\mbox{\larger[1]$\text{\usefont{U}{dutchcal}{m}{n}E}$}}}
\def\SE{\raisebox{- 2 \depth}{$\SSE$}}
\g@addto@macro\normalsize{\setlength\abovedisplayskip{3pt}}
\g@addto@macro\normalsize{\setlength\belowdisplayskip{3pt}}
\newcommand{\delete}[1]{}
\theoremstyle{plain}
\newtheorem{thm}{Theorem}[section] \newtheorem{cor}[thm]{Corollary}
\newtheorem{lem}[thm]{Lemma}  \newtheorem{prop}[thm]{Proposition}
 \newtheorem{defn}[thm]{Definition}
\newtheorem {rem}[thm]{Remark}
\newtheorem*{acknowledgement}{Acknowledgements}
\numberwithin{equation}{section}
\begin{document}

	\title[Low-lying zeros over imaginary quadratic fields]{Low-lying zeros of $L$-functions for Maass forms over imaginary quadratic fields}

\author{Sheng-Chi Liu}%
\address{Department of Mathematics and Statistics, Washington State University,
	Pullman, WA 99164-3113, USA}%
\email{scliu@math.wsu.edu}%

\author{Zhi Qi}
\address{School of Mathematical Sciences\\ Zhejiang University\\Hangzhou, 310027\\China}
\email{zhi.qi@zju.edu.cn}

\subjclass [2010]{11M50}

\thanks{The first author was supported by a grant (\#344139) from the Simons Foundation.}

\keywords{Maass forms, imaginary quadratic fields, $n$-level density,  low-lying zeros}

\begin{abstract}
	We study the $1$- or $2$-level density of families of $L$-functions for Hecke--Maass forms over an imaginary quadratic field $F$. For test functions whose Fourier transform is supported in $\left(-\frac 32, \frac 32\right)$, we prove that the $1$-level density for Hecke--Maass forms over $F$ of square-free level $\mathfrak{q}$, as $\mathrm{N}(\mathfrak{q})$ tends to infinity, agrees with that of the orthogonal random matrix ensembles. For Hecke--Maass forms over $F$ of full level, we prove similar statements for the $1$- and $2$-level densities, as the Laplace eigenvalues tend to infinity.
	\end{abstract}

	\maketitle

\section{Introduction}

Katz and Sarnak \cite{KS-1,KS-2} predict that the
limiting behavior of  the low-lying zeros (zeros near the central point $s = \frac 1 2$) of  a family
of $L$-functions 
agrees with that of the eigenvalues near $1$ of  the ensemble of random matrices associated to the family. 
There is now a vast literature on verifying this conjectural agreement (the Katz--Sarnak heuristics) for various families of $L$-functions.  We refer the reader to  the pioneer work of Iwaniec--Luo--Sarnak \cite{ILS-LLZ}, and also the survey articles \cite{Miller-Survey1,Miller-Survey2} for an extensive set of references.

The purpose of this paper is to verify the Katz--Sarnak heuristics (for suitably restricted test
functions) for families of Hecke--Maass forms over an imaginary quadratic field. Our work is in parallel with \cite{Miller-Maass,Alpoge-Miller-1}, where they consider Maass forms over $\BQ$. Moreover, we shall give a  simple proof of (a variant of) the result in \cite{Alpoge-Miller-1} and, in addition, consider the families of even and odd Maass forms for  $\SL_2 (\BZ)$. 

\subsection{The Katz--Sarnak density conjecture}

%Assume the Generalized Riemann Hypothesis (GRH) for the family of $L$-functions so that their non-trivial zeros lie on the critical line $\Re (s) = \frac 1 2$.  
The main statistic we study in this paper is the $1$- or $2$-level  density.

\begin{defn}[1- and 2-level densities]\label{def: 1-level, 2-level density} Define the $1$-level density of an $L$-function $ L(s, f)$ with non-trivial zeros $\frac 1 2 +i \gamma_f$  {\rm(}note that $\gamma_f \in \BR$ if the generalized Riemann hypothesis holds for $L (s, f)${\rm)} by
	\begin{align}\label{0eq: 1-level density}
	D_1 (f, \phi, R) = \sum_{ \gamma_f} \phi \bigg(  \frac {\log R} {2\pi} \gamma_f \hskip -1pt \bigg),
	\end{align}
	where $\phi : \BR \ra \BC$ is an even Schwartz function such that its Fourier transform 
	\begin{align}\label{0eq: Fourier}
		\widehat{\phi} (y) = \int_{-\infty}^{\infty} \phi (x) e (-xy) d x, 
	\end{align}
	has compact support {\rm(}as usual, $e (x) = e^{2\pi i x}${\rm)}, and $R$ is a scaling parameter. Similarly, the 2-level density is defined by
	\begin{align}\label{0eq: 2-level density}
	D_2 (f, \phi_1, \phi_2, R) = \mathop {\sum \sum}_{ j_1 \neq \pm j_2 } \phi_1 \bigg(  \frac {\log R} {2\pi} \gamma_{f}^{(j_1)} \hskip -1pt \bigg) \phi_2 \bigg(  \frac {\log R } {2\pi} \gamma_{f}^{(j_2)} \hskip -1pt \bigg), 
	\end{align}
	where the zeros $\frac 1 2 + i \gamma_f^{(j)}$ are labeled such that $\gamma_f^{(-j)} = - \gamma_f^{(j)}$. The $n$-level density may be defined in the same way. 
\end{defn}

\begin{rem}\label{rem: GRH}
	The definitions in {\rm\eqref{0eq: 1-level density}} and {\rm\eqref{0eq: 2-level density}} make sense {\rm(}even if $\gamma_f$ are not real{\rm)} irrespective of the Riemann hypothesis for $L(s, f)$ because  $\phi$ is entire. Unless otherwise stated, we shall not assume any generalized Riemann hypothesis in this paper {\rm(}the Riemann hypothesis for the Riemann $\zeta$-function or Dirichlet $L$-functions will be assumed only in the two theorems in Appendix \ref{sec: app real}{\rm)}. 
\end{rem}

To be precise, let $\mathscr{F} = \bigcup \mathscr{F} (Q)$ be  a  family of automorphic forms, ordered by their conductors $Q$. The Katz--Sarnak density conjecture states that as the conductor $Q \ra \infty$,  the averaged $n$-level density %\footnote{As this moment, we do not want to make the definition precise, as we shall need to also average on $Q = t_f$   by a weight function that localizes $t_f$ to a neighborhood of $T$ and then let $T \ra \infty$.} %(weighted by certain non-negative $w$) of the $L$-functions for  $\mathscr{F} (Q)$, defined by $
$\SD_n(\mathscr{F} (Q),\phi ) $ (its definition will be made precise in our setting as in (\ref{0eq: 1-level density, level}, \ref{0eq: 1-level density, t}, \ref{0eq: 2-level density, t}) with extra weights and averages) %= \frac{\sum_{f \in \mathscr{F} (Q)} w (f) D_n (f, \phi, Q )  }{\sum_{f \in \mathscr{F} (Q)}   w (f) } , $$ 
converges to   the limiting $n$-level density for the associated symmetry group $\mathrm{G} (\mathscr{F})$:
\begin{equation}
\lim_{Q\to\infty} \SD_n(\mathscr{F} (Q),\phi ) = \int \cdots\int  \phi(x_1,\dots,x_n)W_{n }(\mathrm{G}(\mathscr{F}))(x_1,\dots,x_n)dx_1\cdots dx_n,\end{equation}
where $\phi (x_1, \dots, x_n) = \phi_1 (x_1) \dots \phi_n (x_n)$, 
\begin{center}
	\begin{tabular}{cc} \\
		$\underline{\ \ \ \ \ \ \ \ \ \ \ \ \mathrm{G} \ \ \ \ \ \ \ \ \
			\ \ }$ & $\underline{\ \ \ \ \ \ \ \ \ \ \ \ \raisebox{0.9 \depth}{$W_{n} (\mathrm{G})$} \
			\ \ \ \ \ \ \ \ \ \ \ }$ \\ \\
		${\rm U }$ & $\det\left(K_0(x_j,x_k)\right)_{ 
			j, \shskip k
			\leqslant n}$ \\ \\
		${\rm Sp }$ & $\det\left(K_{-1}(x_j,x_k)\right)_{ j, \shskip k \shskip \leq \shskip
			n}$ \\ \\
		${\rm SO(even)}$ & $\det\left(K_{1}(x_j,x_k)\right)_{  j, \shskip k \shskip \leqslant \shskip n}$ \\ \\
		${\rm SO(odd)}$ & $\det\left(K_{-1}(x_j,x_k)\right)_{ j, \shskip k \shskip \leq \shskip
			n}  $ \\ \\ 
		$ $ & $ \hskip 40 pt +   \sum_{\varnu=1}^n  \delta_0 (x_{\varnu}) \det\left(K_{-1}(x_j,x_k)\right)_{ j, \shskip k \shskip \neq \varnu}$
	\end{tabular}
\end{center}
and $$W_n ({\rm O}) = \frac 1 2 \big( W_n ({\rm SO(even)}) + W_n ({\rm SO(odd)}) \big),$$  
with
\begin{align*}
K_\epsilon(x,y) = \frac{\sin\left(\pi(x-y) \right)}{\pi(x-y)} +
\epsilon \frac{\sin\left(\pi(x+y) \right)}{\pi(x+y)} .
\end{align*} 

It is often convenient to look at the Fourier transform side; for the $1$-level densities, we have
\begin{align*}
 \widehat{W }_{1} ({\rm O})(y) & = \delta_0(y) + \frac 12,
\\ 
  \widehat{W}_{1}  ({\rm SO(even)})(y) & = \delta_0(y) + \frac12 \eta(y),
 \\ 
  \widehat{W}_{1}   ({\rm SO(odd)})(y) & = \delta_0(y) - \frac12 
\eta(y) + 1 , \\ 
\widehat{W}_{1}  ({\rm U})(y) & = \delta_0(y), \\
 \widehat{W}_{1}  ({\rm Sp})(y) & = \delta_0(y) -
\frac12 \eta(y) , 
\end{align*}
where $\eta(y) = 1$, $\frac 1 2$, $0$ for $|y|<1$,
$y = \pm 1$, $|y| > 1$, and $\delta_0 (y)$ is the Dirac
delta distribution at $y = 0$.  

The different classical compact
groups  have distinguishable $1$-level densities when the support of $\widehat {\phi}$ exceeds $[-1, 1]$. However, for support in $(−1, 1)$ the
three orthogonal flavors are mutually indistinguishable (though they are
different from unitary and symplectic). 
In cases that it is infeasible to extend the support beyond $[-1, 1]$, one could study the $2$-level density, which Miller \cite{Miller-Thesis,Miller-2-level} showed distinguishes the three orthogonal ensembles
for arbitrarily small support.

\subsection*{Setup}

Let $F $  be an imaginary quadratic field, and let $\frO$ be its ring of integers.  Assume for simplicity that the class number   $h_F = 1$.

Let $\Gamma_0 (\fq) $ be the Hecke congruence subgroup of $ \GL_2 (\frO)$\footnote{In the literature, many authors like to consider congruence subgroups of $ \SL_2 (\frO)$. The translation between $\SL_2$ and $\GL_2$ however is usually straightforward (see for example \cite{Venkatesh-BeyondEndoscopy}). %The cusp forms considered here are conventionally called {\it even} in the $\PSL_2$ setting. 
	The main reason for working on $\GL_2 (\frO)$ is to make the definitions of Hecke operators and $L$-functions valid over ideals.} with {\it square-free} level $\fq$. Let $ H^{\star} (\fq)  $ be the collection of primitive Hecke--Maass newforms of level $\fq$. For each $f \in H^{\star} (\fq)  $, we denote by $    1 + 4 t_f^2$ the Laplacian eigenvalue of $f $, and by $\omega^{\star}_f$ the spectral weight of $f $ (as defined in \eqref{2eq: omega*}).  We use the notation $\mathrm{Avg}_{\fq}(A; w)$ to denote the average of $A$ over $ H^{\star} (\fq) $ weighted by $w $. That is,
\begin{align*}
	\Avg_{\fq} (A; w) = \frac{\sum_{f \in H^{\star} (\fq)} w (f) A(f )  }{\sum_{f \in H^{\star} (\fq)}   w (f) } .
\end{align*}

\subsection{Main results}

We start with describing the space of weight functions. 
The space $ \mathscr{H} (S, N) $ below is introduced in \cite[Definition 5.1]{BM-Kuz-Spherical} and \cite[Definition 4]{Venkatesh-BeyondEndoscopy}.

\begin{defn}[Space of weight functions]\label{def: test functions}
	Let $ S > \frac 3 2 $ and $N > 6$. We set  $ \mathscr{H} (S, N) $ to be the space of functions $h : \BR \ra \BC$ which extend to an even holomorphic function on the strip $\{ t + i \sigma : |\sigma | \leqslant S \}$ such that, for   $|\sigma| < S $, we have uniformly
	\begin{align}\label{0eq: bounds for h(t)}
		h (t+ i \sigma)  \Lt e^{- \pi |t|} (|t|+1)^{-N}. 
	\end{align}
	Let $ \mathscr{H}^{\ssstyle +} (S, N) $ be the space of non-negative valued functions $h (t) \nequiv 0$ in  $ \mathscr{H}  (S, N) $. 
	
	For $1 < M < T$ and $h (t) \in \mathscr{H}^{\ssstyle +}   (S, N)$, define 
	\begin{align}\label{0eq: defn of hTM}
		h_{T, \shskip  M} (t) = h ((t-T)/ M) + h ((t+T)/ M),
	\end{align}
	which also lies in the space $ \mathscr{H}^{\ssstyle +} (S, N) $. 
\end{defn}

%We highly recommend the reader to jump to Appendix \ref{sec: app real} at this point. After comparing the analysis therein and that in \cite{Alpoge-Miller-1}, the reader should be convinced that the $h_{T, \shskip  M} (t)$ are indeed the rightful choice of weight functions.  

\vskip 5pt

\subsubsection{Low-lying zeros in the level aspect} 

For $h (t) \in \mathscr{H}^+ (S, N) $ as in Definition \ref{def: test functions}, we let $R = \RN (\fq)$ and define 
\begin{align}\label{0eq: 1-level density, level}
\SD_1 (H^{\star} (\fq), \phi; h) = \Avg_{\fq} \big( D_1 (f , \phi, \RN (\fq)); \omega^{\star}_f h (t_f) \big). 
\end{align}

\begin{thm}\label{thm: level-aspect}
	Fix $h   \in \mathscr{H}^+ (S, N)$. Let  $\phi $ be an even Schwartz function  with the support of $\widehat{\phi}$ in $\left(-\frac 32, \frac 3 2\right)$. Let  $\fq$ be  square-free.  Then 
	\begin{align*}
\lim_{\RN (\fq) \ra \infty} 	 \SD_1 (H^{\star} (\fq), \phi; h) = \int_{-\infty}^{\infty} \phi (x) W_1 (\mathrm{O}) (x) d x. 
	\end{align*}
\end{thm}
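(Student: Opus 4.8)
The plan is to follow the standard Iwaniec–Luo–Sarnak paradigm adapted to the imaginary quadratic setting. First I would use the explicit formula for each $L(s,f)$ with $f \in H^\star(\mathfrak q)$: since $\phi$ is entire and $\widehat\phi$ has compact support, contour-shifting relates $D_1(f,\phi,\mathrm N(\mathfrak q))$ to a sum over prime ideals $\mathfrak p$ of $\widehat\phi$ evaluated at $\log \mathrm N(\mathfrak p)/\log \mathrm N(\mathfrak q)$ weighted by Hecke eigenvalues $\lambda_f(\mathfrak p)$ and $\lambda_f(\mathfrak p^2)$, plus an archimedean term coming from the gamma factors (which produces the $\delta_0(y)+\tfrac12$ main term after averaging), plus a contribution from the sign of the functional equation. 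The archimedean/gamma-factor term is handled by Stirling and contributes the $\delta_0$ piece; the root-number term, after averaging over the (harmonic-weighted) family with the spectral weight $\omega^\star_f h(t_f)$, contributes the constant $\tfrac12$ characteristic of the orthogonal ensemble $\mathrm O$. So everything reduces to showing that the averaged prime sums
\[
\mathrm{Avg}_{\mathfrak q}\Big( \sum_{\mathfrak p} \frac{\lambda_f(\mathfrak p)\,\log \mathrm N(\mathfrak p)}{\sqrt{\mathrm N(\mathfrak p)}}\,\widehat\phi\Big(\tfrac{\log \mathrm N(\mathfrak p)}{\log \mathrm N(\mathfrak q)}\Big);\,\omega^\star_f h(t_f)\Big)
\]
and the analogous $\lambda_f(\mathfrak p^2)$ sum tend to $0$ as $\mathrm N(\mathfrak q)\to\infty$, for $\widehat\phi$ supported in $(-\tfrac32,\tfrac32)$.

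The main tool for this is a Petersson/Kuznetsov-type trace formula over $F$ expressing the weighted average of $\lambda_f(\mathfrak n)$ over $H^\star(\mathfrak q)$ — with the spectral weight $\omega^\star_f h(t_f)$ chosen precisely so that the harmonic average linearizes — as a diagonal term plus a sum of Kloosterman sums over the field $F$ against a Bessel-type integral transform of $h$. The spectral weights $\omega^\star_f$ are the analogue of the harmonic weights $1/\|f\|^2$; the class-number-one hypothesis $h_F=1$ and the use of $\GL_2(\frO)$ make the Hecke theory and the notion of newform level-$\mathfrak q$ clean. The diagonal term in this trace formula vanishes for the linear $\lambda_f(\mathfrak p)$ sum because $\mathfrak p$ is never a perfect square (as an ideal), leaving only the Kloosterman contribution; for the $\lambda_f(\mathfrak p^2)$ sum the diagonal contributes only $\mathfrak p^2=\square$, giving $\sum_{\mathfrak p}\log\mathrm N(\mathfrak p)/\mathrm N(\mathfrak p)\cdot\widehat\phi(\cdots)$, which is $O(1)$ over the tiny support range and hence $o(\log\mathrm N(\mathfrak q))$ after the $1/\log\mathrm N(\mathfrak q)$ normalization — this is where extending support beyond $1$ would otherwise be dangerous, but for the orthogonal symmetry type the relevant obstruction is weaker and support up to $3/2$ is admissible.

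The crux — and the step I expect to be the main obstacle — is bounding the off-diagonal Kloosterman term. After opening the harmonic average and applying the Kuznetsov formula over $F$, one gets a sum over moduli $\mathfrak c \subseteq \frO$ (with $\mathfrak q \mid \mathfrak c$, reflecting the level) of Kloosterman sums $\mathrm{KS}(\ldots;\mathfrak c)$ against an integral transform $\widetilde h$ of the test function $h_{T,M}$-type weight. One bounds the Kloosterman sums by the Weil bound over $F$, $|\mathrm{KS}(m,n;\mathfrak c)|\ll \mathrm N(\mathfrak c)^{1/2+\varepsilon}\gcd(\ldots)$, controls the archimedean transform via the decay and holomorphy properties in the strip $|\sigma|\le S$ with $S>\tfrac32$ (this is exactly why the space $\mathscr H(S,N)$ is defined with that constraint — it gives enough room to shift contours and win a factor of $\mathrm N(\mathfrak q)^{S}$-type savings), and sums over $\mathfrak p$ and $\mathfrak c$. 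The condition $\mathrm{supp}\,\widehat\phi\subseteq(-\tfrac32,\tfrac32)$ translates into $\mathrm N(\mathfrak p)<\mathrm N(\mathfrak q)^{3/2}$, and matching this against the $\mathrm N(\mathfrak c)^{1/2}$ Kloosterman savings together with the $\mathrm N(\mathfrak q)$-saving from the level-lowering in $\mathfrak c$ is what forces the $3/2$ threshold. One must be careful that the lattice-point counting over the imaginary quadratic field introduces an extra power compared to $\mathbb Q$ (Gaussian-integer sums are "two-dimensional"), so the exponents must be tracked carefully; the holomorphic weight $h$ and its rapid decay $e^{-\pi|t|}(|t|+1)^{-N}$ with $N>6$ ensure the relevant $t$-integrals and spectral sums converge. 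Finally one assembles the pieces: archimedean term $\to \delta_0(y)+\tfrac12$ on the Fourier side, i.e. $\int\phi\cdot W_1(\mathrm O)$, and all prime sums $\to 0$, completing the proof.
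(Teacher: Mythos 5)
Your overall skeleton — explicit formula, Kuznetsov trace formula over $F$, Weil bound for Kloosterman sums over $\frO$, and Bessel-integral estimates to get the $\RN(\fq)^{-1/2}$-type savings producing the threshold $\tfrac32$ — matches the paper's approach and the final Kloosterman estimate you sketch (summing the bound $\widetilde{\mathrm{KB}}_{\fq}(\fp;h)\Lt \RN(\fp)^{1/2-\sepsilon}/\RN(\fq)^{1/2-\sepsilon}$ over $\RN(\fp)\leqslant \RN(\fq)^{\varv}$, which is $o(\varphi(\fq))$ precisely for $\varv<\tfrac32$) is correct.

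However, there is a genuine gap in your bookkeeping of the main term. You attribute the constant $\tfrac12\phi(0)$ to ``the root-number term, after averaging over the harmonic-weighted family.'' This is not where it comes from, and in fact this step cannot work as stated: for square-free $\fq$ the root numbers $\epsilon_f = \mu(\fq)\lambdaup_f(\fq)\sqrt{\RN(\fq)} = \pm1$ are \emph{not} constant over $H^{\star}(\fq)$, so there is no clean root-number contribution to extract by averaging. In the actual derivation (Lemma \ref{lem: explicit formula}), the $\tfrac12\phi(0)$ arises from the identity $a_f(\fp^2) = \alphaup_f(\fp)^2+\betaup_f(\fp)^2 = \lambdaup_f(\fp^2) - 1$: the ``$-1$'' portion, summed against $\widehat{\phi}\big(\tfrac{2\log\RN(\fp)}{\log R}\big)\tfrac{\log\RN(\fp)}{\RN(\fp)}$ and divided by $\log R$, yields $\tfrac12\phi(0)+O(1/\log R)$ via the Landau Prime Ideal Theorem. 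Relatedly, your claim that ``the diagonal contributes only $\fp^2=\square$, giving $\sum_\fp \log\RN(\fp)/\RN(\fp)\cdot\widehat\phi(\cdots)$, which is $O(1)$ ... and hence $o(\log\RN(\fq))$ after the normalization'' is wrong on two counts: the Kuznetsov diagonal $\delta_{\frn_1,\frn_2}$ for $\frn_1=\fp^2$, $\frn_2=(1)$ vanishes identically (it is not ``$\fp^2$ being a square'' but ``$\fp^2$ equaling $(1)$'' that would be required), and the PIT sum you describe is on the order of $\log\RN(\fq)$, not $O(1)$ — after normalization it is precisely the main term $\tfrac12\phi(0)$, not an error. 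If you ran your proposal as written you would either lose the $\tfrac12\phi(0)$ term or double-count it. Finally, a smaller point: the $S>\tfrac32$ condition is the default constraint of the space $\mathscr H(S,N)$, but the contour shift actually used in Lemma \ref{lem: H(z), 1} only goes to $\Im(t)=-\tfrac34$; the ``$\RN(\fq)^{S}$-type savings from contour shifting'' you invoke are not the mechanism in the level aspect — the gain there comes from the Bessel-function estimates $H(z)\Lt|z|^2$ for $|z|\leqslant1$ and $H(z)\Lt 1/|z|$ for $|z|>1$ together with Weil's bound, not from shifting contours proportionally to $S$.
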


\begin{rem}
	It is possible to remove
	the spectral weights $\omega_f^{\star}$ in the definition {\rm\eqref{0eq: 1-level density, level}}, and this is done in {\rm\cite{ILS-LLZ}}. For this, one needs to assume the   Riemann hypothesis for $L(s, f)$ and $L(s, \mathrm{Sym^2} f)$. 
\end{rem}

\vskip 5pt

\subsubsection{Low-lying zeros in the $t_f$-aspect} 

Next we investigate the case where $\fq$ is fixed. For ease of exposition we take $\fq = (1)$.   
For $h_{T, \shskip  M} (t)$ as in \eqref{0eq: defn of hTM}, we let $R = T^4$ and define
\begin{align}\label{0eq: 1-level density, t}
\SD_1 (H^{\star} (1), \phi; h_{T, \shskip  M}) = \Avg_{(1)} \big( D_1 (f , \phi, T^4); \omega^{\star}_f h_{T, \shskip  M} (t_f) \big). 
\end{align}

A typical choice of the weight function used by Xiaoqing Li in  \cite{XLi2011} for the subconvexity problem in the $t_f$-aspect is the following $$	h_{T, \shskip  M} (t) = e^{- (t  - T)^2 / M^2} + e^{-(t + T)^2 / M^2}.$$
The class of weight functions $h_{T, \shskip  M} (t)$  in Definition \ref{def: test functions} serves very well the purpose of localizing to the conductors near $T$ and works far better than the class of $h_T (t)$ used by Alpoge and Miller \cite{Alpoge-Miller-1}. 

In the real case, by the arguments in  \cite{XLi2011}, one may easily deduce that  the corresponding (real)  Bessel integral $H_{T, \shskip  M}^{\ssstyle +} (x)$ is negligibly small for any $1 < x \Lt T M^{1-\sepsilon}$ as long as $M = T^{\mu}$ for some $0 < \mu < 1$. We may therefore avoid the complicated analysis in \cite{Alpoge-Miller-1} which occupies 9 pages. See Appendix \ref{sec: app real} for a simple proof of their result (on the $1$-level density for the family of $\SL_2$ Hecke--Maass forms over $\BQ$). 

When  one distinguishes the even and odd Maass forms, there also arises the Bessel integral $ H_{T, \shskip  M}^{\ssstyle -} (x) $. However, it behaves quite differently---$ H_{T, \shskip  M}^{\ssstyle -} (x) $ has an essential support on a neighborhood of $T/2\pi$ with length $M^{1+\sepsilon}$. Under the Riemann hypothesis for classical Dirichlet  $L$-functions, it turns out that $ H_{T, \shskip  M}^{\ssstyle -} (x) $ would contribute a main term to the asymptotics. See Appendix \ref{sec: even and odd}. 

In the complex case, however, the analogous analysis can only show that the (complex) Bessel integral $H_{T, \shskip  M} (x + iy)$ is negligibly small for $1 < |x| \Lt T M^{1-\sepsilon}$ but $|y| \Lt T$. Indeed, there is a dramatic change of behavior of $H_{T, \shskip  M} (x + iy)$ in the transition range $ |y| \asymp T $. When $  |y| \Gt T$, $H_{T, \shskip  M} (x + iy)$  is essentially supported on a {\it very narrow} sector-like region enclosing the $y$-axis. As a consequence, the angular (or horizontal) derivative $ \partial H_{T, \shskip  M} (x e^{i \theta}) / \partial \theta  $ could be {\it very large} on this supportive region.  See Lemma \ref{lem: HTM(z), 2} and Remark \ref{rem: HTM}.

\begin{thm}\label{thm: t-aspect, 1}
	Let $T, M > 1$ be such that $M = T^{\mu}$ with $0 < \mu < 1$. Fix $h   \in \mathscr{H}^+ (S, N)$ and define $h_{T, \shskip  M} $ by {\rm\eqref{0eq: defn of hTM}} in Definition {\rm\ref{def: test functions}}. Let  $\phi $ be an even Schwartz function  with the support of $\widehat{\phi}$ in $\left(-1, 1\right)$. Then 
	\begin{align*}
	\lim_{T \ra \infty} 	\SD_1 (H^{\star} (1), \phi ; h_{T, \shskip  M}) = \int_{-\infty}^{\infty} \phi (x) W_1 (\mathrm{SO(even)}) (x) d x. 
	\end{align*}
\end{thm}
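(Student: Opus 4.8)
The plan is to follow the standard explicit-formula approach for $1$-level densities, adapted to the imaginary quadratic setting with the Kuznetsov trace formula playing the role of the Petersson formula. \textbf{Step 1: The explicit formula.} Starting from the definition \eqref{0eq: 1-level density}, I would apply an explicit formula (contour shift in the completed $L$-function $\Lambda(s,f)$) to rewrite $D_1(f,\phi,T^4)$ as a sum of: (i) a main term involving $\widehat{\phi}(0)$ and $\phi(0)$ coming from the archimedean factors and the pole structure, (ii) a ``sign of the functional equation'' term, which after averaging over the family should produce the extra $\frac12\eta(y)$ distinguishing ${\rm SO(even)}$ from the plain orthogonal answer, and (iii) a sum over prime ideals $\fp$ of $\frac{\log \RN\fp}{\log R}\,\widehat{\phi}\!\left(\frac{\log \RN\fp}{\log R}\right)\lambda_f(\fp)$ plus a corresponding $\fp^2$ term. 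The $\fp^2$ term is handled by the Rankin--Selberg/symmetric-square bound and contributes only lower order once we average, while the linear-in-$\fp$ term is the one requiring cancellation.

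\textbf{Step 2: Averaging via Kuznetsov.} Plugging the prime sum into $\SD_1 (H^{\star}(1),\phi;h_{T,M}) = \Avg_{(1)}(D_1(f,\phi,T^4);\omega_f^{\star} h_{T,M}(t_f))$ and opening up $\omega_f^{\star}$, the weighted sum $\sum_f \omega_f^{\star} h_{T,M}(t_f)\lambda_f(\fp)$ is exactly what the spherical Kuznetsov formula over $F$ evaluates. The diagonal term gives $\fp = (\text{unit})$, which is empty unless $\RN\fp=1$, so the diagonal contributes nothing to the prime sum; the continuous-spectrum (Eisenstein) contribution is controlled by a standard bound on $|\zeta_F(1+2it)|^{-1}$ and is negligible. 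What remains is the Kloosterman term: a sum over moduli $c \in \frO$ of (Gaussian or imaginary-quadratic) Kloosterman sums $S(\fp,1;c)$ against the Bessel integral $H_{T,M}(4\pi\sqrt{\fp}/c)$ where $H_{T,M}$ is the complex Bessel transform of $h_{T,M}$.

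\textbf{Step 3: Estimating the Kloosterman--Bessel sum.} This is where the hard work lies and where the support restriction $(-1,1)$ enters. Since $\widehat{\phi}$ is supported in $(-1,1)$, the prime sum runs over $\RN\fp \Lt R^{1} = T^4$, i.e. $\sqrt{\RN\fp} \Lt T^2$. I would invoke the Bessel-integral estimates quoted in the introduction: $H_{T,M}(x+iy)$ is negligible for $1 < |x| \Lt TM^{1-\sepsilon}$ while $|y|\Lt T$, and is supported in the narrow sector near the imaginary axis when $|y| \Gt T$. Combined with Weil's bound $|S(\fp,1;c)| \Lt \RN(c)^{1/2+\sepsilon}(\RN\fp,\RN c)^{1/2}$ and the measure of the supportive sector, the total Kloosterman contribution should be bounded by a negative power of $T$ provided the argument $4\pi\sqrt{\RN\fp}/\RN(c) \Lt T^2/\RN(c)$ stays in the negligible range for all relevant $c$ — this forces $\RN\fp \Lt T^{2}M^{2-\sepsilon}$, i.e. $R \Lt T^{2}M^{2} = T^{2+2\mu}$, which for $\mu$ close to $1$ still only comfortably covers $R = T^4$ with support strictly inside $(-1,1)$. (This is precisely why the theorem caps the support at $1$ rather than $\frac32$, in contrast with Theorem \ref{thm: level-aspect}: the transition at $|y|\asymp T$ and the large angular derivative block any extension.) The main obstacle is bookkeeping the two-dimensional $c$-sum over $\frO$ together with the genuinely complex (two-variable) oscillatory behavior of $H_{T,M}$, and ensuring that the sector of non-negligibility, though it carries a large angular derivative, has small enough area that summation by parts (or a crude absolute bound) still wins.

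\textbf{Step 4: Identifying the main term.} Finally I would collect the surviving terms: $\widehat{\phi}(0)$ from the gamma factors gives the $\delta_0(y)$ piece, and the averaged root-number term, evaluated using the weighted equidistribution of signs over $H^{\star}(1)$ with the weights $\omega_f^{\star}h_{T,M}(t_f)$ (all newforms of full level are, after the Fricke/reflection analysis, effectively ``even'' in the relevant sense), produces $\frac12\eta(y)$; summing, $\widehat{W}_1({\rm SO(even)})(y) = \delta_0(y)+\frac12\eta(y)$, and Parseval against $\widehat{\phi}$ supported in $(-1,1)$ (where $\eta \equiv 1$) yields $\int \phi(x)W_1({\rm SO(even)})(x)\,dx$ as claimed.
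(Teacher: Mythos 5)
Your Steps 2 and 3 capture the right skeleton — average via the spherical Kuznetsov formula, then bound the Kloosterman--Bessel term using the localization of $H_{T,\shskip M}$ — and in fact for support in $(-1,1)$ only the simpler negligibility estimate is needed ($H_{T,\shskip M}(z)=O_A(T^{-A})$ for $1<|z|\Lt T$, Corollary \ref{cor: HTM(z), 2}, fed into \eqref{3eq: bound for BKTM}). The sector-near-the-imaginary-axis analysis and the angular-derivative discussion you invoke are not needed to prove the theorem; they appear in the paper only to explain why the support \emph{cannot} be pushed past $\pm 1$ (Remark \ref{rem: failure [-1, 1]}).

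However, Steps 1 and 4 contain a genuine gap in identifying where $\tfrac12\phi(0)$ comes from. The $\GL_2$ explicit formula (Lemma \ref{lem: explicit formula}) has no separate ``sign of functional equation'' term, and no equidistribution-of-signs argument occurs. The root number only governs the parity of vanishing at $s=\tfrac12$; this matters for the $2$-level density in Theorem \ref{thm: t-aspect, 2}, but it contributes no additive piece to $D_1$. The $\tfrac12\phi(0)$ in fact comes from the prime-square term: writing $a_f(\fp^2)=\alphaup_f(\fp)^2+\betaup_f(\fp)^2=\lambdaup_f(\fp^2)-1$, the deterministic $-1$, summed against $\widehat\phi\!\left(\tfrac{2\log\RN\fp}{\log R}\right)\tfrac{2\log\RN\fp}{\RN\fp\log R}$, gives $\tfrac12\phi(0)+O(1/\log R)$ by the Landau Prime Ideal Theorem for $F$. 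Only the residual $\lambdaup_f(\fp^2)$-part averages to $o(1)$. So your claim that the $\fp^2$ term ``contributes only lower order'' is incorrect: it supplies precisely the main term that distinguishes the orthogonal answer from the unitary one. Following your proposal as written you would either drop this term or go looking for a nonexistent root-number contribution, and the final identification with $W_1(\mathrm{SO(even)})$ would not go through. (Note also that on $(-1,1)$ all three orthogonal flavors agree, so the $1$-level density alone cannot pin down $\mathrm{SO(even)}$; that refinement is what Theorem \ref{thm: t-aspect, 2} is for.)
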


Even if we assume the Riemann hypothesis for Hecke-character $L$-functions over $F$, it is unlikely that  the support of $\widehat{\phi}$ can be extended beyond the segment $[-1, 1]$. This will be explained in Remark   \ref{rem: failure [-1, 1]}. Suffice it to say, the arguments that worked in the real case are invalidated by the largeness of  $ \partial H_{T, \shskip  M} (x e^{i \theta}) / \partial \theta$. %\footnote{This is another instance that the angular variable makes it impossible to generalize the analysis from $\BR$ to $\BC$ in a parallel way. Similar obstacles arise when one attempts to extend the subconvexity result in \cite{XLi2011} from $\BQ$ to $\BQ (\sqrt{- D})$. Fortunately, thanks to the work of Miller \cite{Miller-Thesis,Miller-2-level}, we have the $2$-level density as an alternative here.}
Therefore we must resort to the averaged $2$-level density
\begin{align}\label{0eq: 2-level density, t}
\SD_2 (H^{\star} (1), \phi_1, \phi_2; h_{T, \shskip  M}) = \Avg_{(1)} \big( D_2 (f , \phi_1, \phi_2, T^4); \omega^{\star}_f h_{T, \shskip  M} (t_f) \big).
\end{align}

According to \cite[Theorem 3.2]{Miller-2-level}, for an even  function $\widehat\phi_1 (y_1) \widehat\phi_2 (y_2)$ supported in $|y_1| + |y_2| < 1$,
\begin{equation}
\begin{split}
\scalebox{0.99}{\text{$\ds	\int \hskip -4pt \int \hskip -1pt \widehat{\phi}_1(y_1) \widehat{\phi}_2(y_2) \widehat{W}_{2} (\hskip -0.5pt\mathrm{SO(even)} \hskip -0.5pt) (y_1, \hskip -1pt y_2) d y_1 d y_2 \hskip -0.5pt = \hskip -1.5pt \lp \hskip -0.5pt \widehat{\phi}_1(0) \hskip -1pt + \hskip -1pt \tfrac{1}{2} \phi_1(0) \hskip -0.5pt \rp \hskip -1pt \lp \hskip -0.5pt \widehat{\phi}_2 (0) \hskip -1pt + \hskip -1pt \tfrac{1}{2} \phi_2(0) \hskip -0.5pt \rp $}} &	\\
\scalebox{0.99}{\text{$\ds + 2 	\int_{-\infty}^{\infty}  |y|  \widehat{\phi_1}(y)  \widehat{\phi}_2 (y) d y 
- 2  \widehat{\phi_1 \phi}_2 (0) -  \phi_1(0) \phi_2(0)$}}, &
\end{split}
\end{equation}
and for arbitrarily small support, the three $2$-level densities for the groups $\mathrm{O}$, $\mathrm{SO(even)}$ and $\mathrm{SO(odd)}$ differ. Thus the following theorem implies that the symmetry type in the $t_f$-aspect is $\mathrm{SO(even)}$.

\begin{thm}\label{thm: t-aspect, 2} Let $T$, $M$ and  $h_{T, \shskip  M} $ be as in Theorem {\rm\ref{thm: t-aspect, 1}}.  Let  $\phi_1 $ and $\phi_2$ be even Schwartz functions  with the supports of $\widehat{\phi}_{1}$ and $\widehat{\phi}_{2}$ in $\left(-\frac 1 2, \frac 1 2\right)$. Then 
	\begin{align*}
	\lim_{T \ra \infty} 	\SD_2 (H^{\star} (1), \phi_1, \phi_2 ; h_{T, \shskip  M}) \hskip -1pt = \hskip -1pt \int \hskip -4pt \int   \phi_1 (x_1) \phi_2 (x_2) W_2 (\mathrm{SO(even)}) (x_1, x_2) d x_1 d x_2. 
	\end{align*}
\end{thm}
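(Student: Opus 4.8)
The plan is to reduce the $2$-level density to a sum of products of $1$-level-type sums plus a diagonal correction term, and to evaluate each piece via an explicit formula and a Kuznetsov-type trace formula over $F$. Concretely, writing the zeros of $L(s,f)$ through the explicit formula, each factor $\phi_i\!\left(\tfrac{\log R}{2\pi}\gamma_f^{(j)}\right)$ becomes $\widehat\phi_i(0)$ plus a sum over prime powers of $F$ weighted by Hecke eigenvalues $\lambda_f(\mathfrak p^k)$, plus archimedean terms depending on $t_f$. Expanding the double sum in \eqref{0eq: 2-level density, t} and using the combinatorial identity behind \cite[Theorem 3.2]{Miller-2-level} (the restriction $j_1\neq\pm j_2$ produces, after squaring a single sum, a diagonal term $-2D_1(f,\phi_1\phi_2,R)$ and a term $-\sum\phi_1(0)\phi_2(0)$-type contribution over the zeros), I would obtain $\SD_2$ as: a product $\SD_1(\cdot,\phi_1)\SD_1(\cdot,\phi_2)$-like main term, minus a $\SD_1(\cdot,\phi_1\phi_2)$ term, minus lower-order pieces, plus cross terms coming from the non-factorization of the average over $f$.

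The key steps, in order: (i) apply the explicit formula to write $D_2(f,\phi_1,\phi_2,T^4)$ in terms of $\widehat\phi_i(0)$, archimedean contributions $\propto \int \widehat\phi_i \cdot (\text{log-derivative of }\gamma\text{-factor})$, and arithmetic sums $\sum_{\mathfrak p,k}\frac{\log\Nm\mathfrak p}{\Nm\mathfrak p^{k/2}}\lambda_f(\mathfrak p^k)\widehat\phi_i\!\left(\tfrac{k\log\Nm\mathfrak p}{\log R}\right)$; (ii) insert into the weighted average $\Avg_{(1)}(\,\cdot\,;\omega_f^\star h_{T,M}(t_f))$ and open all products, obtaining diagonal arithmetic terms (one prime-power sum), off-diagonal arithmetic terms (products of two prime-power sums, i.e.\ $\lambda_f(\mathfrak m)\lambda_f(\mathfrak n)$), and mixed arithmetic/archimedean terms; (iii) apply the Kuznetsov formula over $F$ to each arithmetic average, separating the ``delta term'' $\delta_{\mathfrak m=\mathfrak n}$ (which feeds the random-matrix main terms) from the Kloosterman/Bessel term; (iv) bound the Kloosterman term using the decay properties of the complex Bessel integral $H_{T,M}(z)$ established in Lemma~\ref{lem: HTM(z), 2} — crucially, since $\widehat\phi_i$ are supported in $(-\tfrac12,\tfrac12)$, the relevant Kloosterman moduli $\Nm(\mathfrak c)$ are small relative to $T^2$, so the Bessel integral is evaluated only in the regime $|x|\ll TM^{1-\epsilon}$, $|y|\ll T$, where it is negligible and the problematic large angular derivative does not intervene; (v) evaluate the archimedean terms, showing $\frac1{\log R}\cdot(\text{Stirling of complex }\Gamma\text{-factors})$ at scale $t_f\asymp T$ yields the $\log R=\log T^4$ normalization that matches the $\delta_0$ and $\eta$ pieces of $\widehat W_2(\mathrm{SO(even)})$; (vi) reassemble and compare with the displayed formula for $\int\!\!\int\widehat\phi_1\widehat\phi_2\widehat W_2(\mathrm{SO(even)})$, identifying the sign of the functional equation / the ``$+1$'' phenomenon that forces the even-orthogonal symmetry (here all $t_f$-aspect forms behave uniformly, giving $\mathrm{SO(even)}$ rather than $\mathrm{O}$).

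The main obstacle I expect is step (iv) for the off-diagonal term with $\mathfrak m\neq\mathfrak n$: one needs the product of two arithmetic sums, each of length up to $R^{1/2}=T^2$, so $\Nm(\mathfrak m\mathfrak n)$ can reach $T^4$, and the Kloosterman modulus $\Nm(\mathfrak c)$ ranges up to roughly $\sqrt{\Nm(\mathfrak m\mathfrak n)}\ll T^2$ — precisely the transition range $|y|\asymp T$ where $H_{T,M}(x+iy)$ changes behavior. The resolution is that the condition $|y_1|+|y_2|<1$ (equivalently supports in $(-\tfrac12,\tfrac12)$ for each) keeps $k_1\log\Nm\mathfrak p_1 + k_2\log\Nm\mathfrak p_2 < \log R$, hence $\Nm(\mathfrak m\mathfrak n)<R=T^4$ but with enough room that after the Kuznetsov transform the argument of $H_{T,M}$ stays in the safe region; making this quantitative — tracking how the support constraint propagates through the Bessel argument $4\pi\sqrt{\Nm(\mathfrak m\mathfrak n)}/\Nm(\mathfrak c)$ and invoking Lemma~\ref{lem: HTM(z), 2} with the correct uniformity in $y$ — is the delicate part, and is exactly where the restriction to $(-\tfrac12,\tfrac12)$ (rather than $(-1,1)$) is forced. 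The remaining steps are standard: the combinatorics of step (i)–(ii) follows \cite{Miller-2-level}, the delta-term extraction in (iii) follows the spherical Kuznetsov formula over imaginary quadratic fields, and the archimedean analysis in (v) is a routine complex-Stirling computation.
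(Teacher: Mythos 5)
Your plan is essentially the paper's proof: decompose $D_2(f,\phi_1,\phi_2,R) = D_1(f,\phi_1,R)\,D_1(f,\phi_2,R) - 2\,D_1(f,\phi_1\phi_2,R)$ (with no additional central-zero correction, precisely because $\epsilon_f = +1$ for every $f\in H^{\star}(1)$ forces even vanishing order at $s=\tfrac12$), expand each $D_1$ via Lemma \ref{lem: explicit formula}, feed the averages $\Avg_{(1)}\big(\lambdaup_f(\fp_1^{\varnu_1})\lambdaup_f(\fp_2^{\varnu_2});\omega_f^{\star}h_{T,M}\big)$ into Corollary \ref{cor: Delta (n1, n2) =}, keep the Kronecker-delta diagonal (which yields $2\int|y|\widehat\phi_1(y)\widehat\phi_2(y)\,dy$ by the Prime Ideal Theorem), bound the off-diagonal Kloosterman--Bessel and Eisenstein remainders, and evaluate $\Avg_{(1)}(D_1(f,\phi_1\phi_2,T^4);\cdot)$ by Theorem \ref{thm: t-aspect, 1} since $\widehat{\phi_1\phi_2}$ is supported in $(-1,1)$. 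Two details in your obstacle analysis could be tightened: supports in $(-\tfrac12,\tfrac12)$ for each $\widehat\phi_i$ are strictly stronger than, not equivalent to, $|y_1|+|y_2|<1$; and the half-support constraint is forced just as much by the $O(K)=O(M\log^2 T)$ Eisenstein error in Corollary \ref{cor: Delta (n1, n2) =} — which, against a double prime sum of lengths $T^{4v}$ weighted by $\RN(\fp_1\fp_2)^{-1/2}$ and divided by the total mass $\asymp MT^2$, gives $\asymp T^{4v-2}$, hence requires $v<\tfrac12$ — as by keeping the Bessel argument $|z| = \RN(\frn_1\frn_2)^{1/4}/\RN(c)^{1/2} \Lt T$ inside the regime of Corollary \ref{cor: HTM(z), 2}.
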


\subsection*{Notation} By $B \Lt D$ or $B = O (D)$ we mean that $|B| \leqslant c D$ for some constant $c > 0$, and by $B \asymp D$  we mean that  $B \Lt D$ and $D \Lt B$. We write $B \Lt_{P, \shskip Q, \, \dots} D$ or $B = O_{P, \shskip Q, \, \dots} (D)$ if the implied constant $c$ depends on  $P$, $Q, \dots$. Throughout this article $T > 1$ will be a large parameter, and we say   $B$ is negligibly small if $B = O_A (T^{-A})$ for any $A > 0$. We adopt the usual $\vepsilon$-convention of analytic number theory; the value of $\vepsilon $ may differ from one occurrence to another.

\begin{acknowledgement}
	We thank the referee for his/her careful reading, for pointing out an error, and for several suggestions which helped improve our paper.
\end{acknowledgement}

\section{Preliminaries}

Throughout this paper, fix an imaginary quadratic field $F  $ of class number $h_{F} = 1$. Let $\frO$ be the ring of integers, $\frd$ be the different ideal, and $\frOO^{\times} $ be the unit group.  Let $d_F$ be the discriminant and $\varw_F = |\frOO^{\times}|$. Let $\RN$ and $\Tr$ be the norm and the trace map on $F$.

In general, we shall use Gothic letters $\fra$, $\frm$, $\frn$, $\fp$, $\fq$, $\frv$ ... to denote {\it nonzero} fractional (mostly integral) ideals of $F$. As convention, let $\fp$ always stand for a prime ideal.  For a nonzero integral ideal $\frn $, let $\RN (\frn) = |\frO / \frn |$ be the norm of $\frn$. 

For $n_1$, $n_2  \in \frO$ and $c \in \frd \smallsetminus\{0\}$, define the Kloosterman sum \begin{equation}
\label{1eq: Kloosterman} S (n_1, n_2; c) = \sumx_{\sstyle a  (\mod c \frd\-) } e \lp \mathrm{Tr} \lp \frac {n_1 a + n_2 \widebar a }    {  c}  \rp \rp,
\end{equation}
where $\sumx$ means that $a$ runs over representatives of $( \frO / c \frd\-)^{\times} $ and $a \widebar a \equiv 1 (\mod c \frd\-)$. 
We have Weil's bound  
\begin{equation}
\label{1eq: Weil}
S (n_1, n_2; c)   \Lt  \RN(n_1  , n_2   , c \frd\-)^{\frac 1 2}  \RN (c)^{\frac 1 2 + \sepsilon},
\end{equation}  
where the brackets $(\shskip  \cdot \shskip , \shskip \cdot \shskip, \shskip \cdot \shskip)$ denote greatest common divisor.

\subsection{Automorphic forms on $\BH^3$} 

In this section, we recollect some basic notions in the theory of automorphic forms on  $\BH^3$. We refer the reader to \cite{EGM} or \cite{B-Mo2} for further details. 

\vskip 5 pt

\subsubsection{The three-dimensional hyperbolic space}

We let $$\BH^3 = \left\{ w = z + j r = x + i y + j r : x, y, r \text{ real}, r > 0 \right\}  
$$ denote the three-dimensional hyperbolic space, with the action of $\GL_2 (\BC)$ or $\PGL_2 (\BC)$($= \PSL_2 (\BC)$) given by 
\begin{align*}
z (g \hskip -1pt \cdot \hskip -1pt w) = \frac { (az+ b) (\overline {c} \overline z + \overline d) \hskip -1pt + \hskip -1pt a \overline c r^2 } {|cz+d|^2 \hskip -1pt + \hskip -1pt |c|^2 r^2} , \hskip 5 pt r (g \hskip -1pt \cdot \hskip -1pt w) = \frac {r |\det g| } {{|cz+d|^2 \hskip -1pt + \hskip -1pt |c|^2 r^2}}, \quad g = \begin{pmatrix}
a \hskip -1pt & b \\ 
c \hskip -1pt & d \\
\end{pmatrix} \hskip -1pt,
\end{align*}  
while the action of $\GL_2 (\BC)$ on the boundary $\partial \BH^3 =   \BC \cup \{\infty \}$ is by the M\"obius transform.
$\BH^3$ is equipped with the $\GL_2 (\BC)$-invariant hyperbolic metric $    ( d x^2 + d y^2 + d r^2) / r^2$ and hyperbolic measure $    d x\, dy\, d r / r^3$.  The associated hyperbolic Laplace--Beltrami operator is given by $\varDelta =    r^2 \lp \partial^2/\partial x^2 + \partial^2/\partial y^2 + \partial^2/\partial r^2  \rp -   r \partial /\partial r$.

\vskip 5 pt

\subsubsection{Hecke congruence groups}

%Let $\Gamma = \GL_2 (\frO)$. 
For  a nonzero integral ideal $\fq$, define the Hecke congruence group $\Gamma_0 (\fq)$ of $\GL_2 (\frO)$ as follows,
\begin{align*}%\label{1eq: congruence group, SL}
\Gamma_0 (\fq) = \left\{ \begin{pmatrix}
a & b \\ 
c & d \\
\end{pmatrix} \in \GL_2 (\frO) : c \equiv 0\, (\mod \fq) \right\}. %/ \left\{ \epsilon I_2 : \epsilon \in \frOO^{\times} \right\}.
\end{align*}
In this paper, we assume that $\fq$ is square-free. %and relatively prime to $\frd$.  

\vskip 5 pt

\subsubsection{Maass cusp forms}
Let  $ \mathscr{B} (\fq) $ be an orthogonal basis of Maass cusp forms  in the  $L^2$-cuspidal spectrum  of the  Laplace--Beltrami operator  $\varDelta$ on $\Gamma_0 (\fq)  \backslash \BH^3$. For $f \in \mathscr{B} (\fq) $, let $\|f \|$ denote the $L^2$-norm of $f$ on  $\Gamma_0 (\fq)  \backslash \BH^3$,
\begin{align*}
\|f\|^2 = \int_{ \Gamma_0(\fq) \backslash \BH^3} |f (w)|^2 \frac {d x\, dy\, d r}  {r^3} .
\end{align*} 
For $f   $ with Laplacian eigenvalue $    1 + 4 t_f^2$, we have the Fourier expansion (cf. \cite[Theorem 3.3.1]{EGM})
\begin{align*}
f  (z, r) = \sum_{ m \, \in \frd^{-1} \smallsetminus \{0\} } \rho_f (m) r K_{2 i t_f} (4 \pi |m| r) e (\Tr (m z)).
\end{align*}
Since $ \displaystyle   \begin{pmatrix}
\epsilon  & \\ & 1 
\end{pmatrix}  \in \Gamma_0 (\fq) $ for any $\epsilon \in \frOO^{\times}$,  $\rho_f (m)$ only depends on the ideal $\frm = (m)$, and we may therefore set $\rho_f (\frm) = \rho_f (m)$. 
According to the Kim--Sarnak bound in \cite{Blomer-Brumley}, we know that $t_f $ is either real or purely imaginary with $  | \mathrm{Im} (t_f)|  \leqslant \frac{7}{64}.$

%For any $\epsilon \in \frOO^{\times}$, we have $\displaystyle \widebar{\epsilon}  \hskip -0.5 pt \cdot \hskip -2 pt  \begin{pmatrix}
%\epsilon^2 & \\ & 1 
%\end{pmatrix}  \in \Gamma_0 (\fq)$, we infer that $f $ is invariant under the action of $\displaystyle \begin{pmatrix}
%\epsilon^2 & \\ & 1 
%\end{pmatrix}$ and hence $ \rho_f (\epsilon^2 n) = \rho_f (n)$. Define $\frOO^{\times 2} = \{  \epsilon^2 : \epsilon \in \frO \}$. Since $\frOO^{\times}$ is cyclic and of even order, $\frOO^{\times} / \frOO^{\times 2}$ is of order $2$.  We say that a Maass cusp form for $\Gamma_0 (\fq) $ is   {even} or odd  if  it is an eigenfunction of the action of $\displaystyle \begin{pmatrix}
%\epsilon & \\ & 1 
%\end{pmatrix}$  with eigenvalue $1$ or $-1$ for $\epsilon \in \frO \smallsetminus \frOO^{\times 2}$, respectively. Note that $f $ is even if and only if $\rho_f (\epsilon n) = \rho_f (n)$ for all $\epsilon \in \frOO^{\times}$ and $n \in \frd\- \smallsetminus \{0\}$; we may thus write $ \rho_f (\frn) = \rho_f ( n) $ for $\frn = (n)$. Subsequently, we shall focus only on even Maass cusp forms. 

\vskip 5 pt

\subsubsection{Hecke--Maass newforms and their $L$-functions}\label{section: newforms}

For a nonzero integral ideal $\frn = (n) $,  we define the Hecke operator $T_{\frn}$ by
\begin{equation*}
T_{\frn} f (\tw) = \frac 1 {\varw_F \sqrt {\RN (n)}} \sum_{ a d = n} \ \sum_{ b (\mod d)} f \lp \begin{pmatrix}
a & b \\
& d
\end{pmatrix}
\tw \rp.
\end{equation*}
Hecke operators commute with each other as well as  the Laplacian operator. 

Let $H^{\star} (\fq) $ %= \{ f_{j, q} \}_{j \geqslant 1}$ 
be the   set of {\it primitive}  newforms for $\Gamma_0 (\fq)$ %(newforms for $\Gamma'_0 (q)$) 
which are eigenfunctions of all the $T_\frn$. A form $f$ is called primitive if $\rho_f (\frd\-) = 1$. Let $\lambdaup_f (\frn) $ denote the   Hecke eigenvalue of $T_\frn$ for $f $, then % the formula above yields the relation 
\begin{equation*}%\label{1eq: relation Fourier Hecke}
\rho_f (\frd^{-1} \frn ) =  \lambdaup_f ( \frn).  
\end{equation*}
For a newform $f$ in $H^{\star} (\fq) $,  we have  the Hecke relation
\begin{align}\label{1eq: Hecke rel, 2}
\lambdaup_f (\frn_1) \lambdaup_f (\frn_2) =  \mathop{\sum_{ \fra | \frn_1, \shskip \fra| \frn_2}}_{(\fra, \shskip \fq) = (1) } \lambdaup_f \big(\frn_1 \frn_2  \fra^{-2}\big),
\end{align}
and
\begin{align}\label{1eq: lambda p, p|q}
\lambdaup_f (\fp)^2 = \frac 1 {\RN (\fp)}, \quad \quad \fp | \fq .
\end{align} 
Moreover, for $ \fp \hskip -1pt \nmid \hskip -1pt \fq $, if we let  $ \alphaup_f (\fp) +  \betaup_f (\fp) = \lambdaup_f (\fp) $ and $ \alphaup_f (\fp) \betaup_f (\fp) = 1 $, then we have
 the Kim--Sarnak bound in \cite{Blomer-Brumley}:
\begin{align} \label{Ramanujan}
|\alphaup_f(\mathfrak{p})|, \, |\betaup_f(\mathfrak{p})| \leqslant  \RN(\mathfrak{p})^{\frac{7}{64}}, \quad \quad \fp \hskip -1pt \nmid \hskip -1pt \fq. 
\end{align}

For a newform $ f    \in  H^{\star} (\fq) $, its associated $L$-function $L (s, f )$ is defined by 
\begin{align*}%\label{1eq: L (f)}
L (s, f  ) %= \frac 1 {16} \underset{n_1, n_2 \in \frO \smallsetminus \{0\} }{\sum \sum} \frac {\lambdaup_f (n_2) A (n_1, n_2) } {\left|n_1^2 n_2\right|^{2 s} } 
= \underset{\frn \shskip \subset \frOO  }{\sum} \frac {  \lambdaup_f   (\frn ) } {\RN  ( \frn   )^{  s} }, \quad \quad \Re  (s) > 1. 
\end{align*} 
Its Euler product is
\begin{align*}
L(s, f )  = \prod_{\fp | \fq}\left( 1-  {\lambdaup_f(\fp)}{\RN(\fp)^{-s}}  \right)^{-1} \prod_{\fp \shskip \nmid \fq} \left(   1 -  {\lambdaup_f(\fp)}{\RN(\fp)^{-s}} + \RN (\fp)^{-2s} \right)^{-1}.
\end{align*}
The $L$-function $L (s, f)$ analytically continues to an entire function of the complex
plane. The completed $L$-function
\begin{equation*}
\Lambda (s,f ) =  (2\pi)^{-2s} \Gamma (s - i t_f) \Gamma   (s + i t_f ) |d_F|^s \RN(\fq)^{s/2}  L(s,f )
\end{equation*}
satisfies the functional equation:
\begin{equation*}
\Lambda (s,f  ) = \epsilon_f \Lambda (1-s, f ),
\end{equation*}
where $\epsilon_f=   \mu(\fq) \lambdaup_f (\fq) \sqrt {\RN(\fq)} = \pm 1$. 
Note that the local factor of $L (s, f )$ at $\fp \nmid \fq$ factors further as 
\begin{align*}
\lp 1 - \alphaup_f (\fp) \RN(\fp)^{-s} \rp^{-1} \lp 1 - \betaup_f (\fp) \RN(\fp)^{-s} \rp^{-1}.
\end{align*}
%where $  \alphaup_f (\fp) $, $ \betaup_f (\fp)$ are complex numbers with $ \alphaup_f (\fp) +  \betaup_f (\fp) = \lambdaup_f (\fp) $ and $ \alphaup_f (\fp) \betaup_f (\fp) = 1 $. 

\subsection{The spectral Kuznetsov trace formula  for $\Gamma_0  (\fq) \backslash \BH^3$}\label{sec: Kuznestov}

For $t $ real, define the Bessel function (cf. \cite[\S 15.3]{Qi-Bessel})
\begin{equation}\label{0eq: defn of Bessel}
\boldJ_{it} (z) = \frac {2 \pi^2} {\sin (2\pi i t)}  \big( J_{- 2 it   } \lp 4 \pi    z \rp J_{- 2 it    } (  4 \pi   {\widebar z} )    - J_{ 2 it   } \lp 4 \pi    z \rp J_{ 2 it }  ( 4 \pi   {\widebar z} )   \big) ,
\end{equation}
in which $J_{\varnu} (z)$ is the Bessel function of the first kind. 
This  is a well-defined {\it even} function in the sense that the   expression on the right of \eqref{0eq: defn of Bessel} is independent on the choice of the argument of $z$ modulo $ \pi$. It is understood that in the non-generic case when  $t = 0$  the right-hand side should be replaced by its limit.

Let $h (t)$ be a weight function in the space $ \mathscr{H} (S, N)  $ defined as in Definition \ref{def: test functions}.\footnote{According to \cite{B-Mo2}, the Kuznetsov trace formula is valid for an enlarged space of weight functions. Thus our results should still be valid if \eqref{0eq: bounds for h(t)} is replaced by the Schwartz condition.}    
For nonzero integral ideals $ \frn_1,$ $\frn_2 $, define
\begin{align}\label{1eq: Kuznetsov} 
\Delta_{\fq} (\frn_1, \frn_2; h) & =  \sum_{f \in  \mathscr{B} (\fq)} \hskip -1pt \omega_f    { h  ( t_f ) \rho_f  (\frd\- \frn_1)   \overline {\rho_f (\frd\- \frn_2)} },  \\
\label{1eq: Xi} \Xi_{\fq} (\frn_1, \frn_2; h) & 
= \frac 4 {  \pi \varw_F \hskip -1pt \sqrt{|d_F|} } \sum_{ \frv | \fq } \hskip -1pt   \int_{-\infty}^{\infty} \hskip -2pt  \omega_{\frv} (t) h (   t ) 
\eta \hskip -1pt \lp \frn_1, \hskip -1pt \tfrac 1 2 + i t \rp  \eta \hskip -1pt \lp \frn_2 , \hskip -1pt \tfrac 1 2 - i t \rp  \hskip -1pt  d t , 
\end{align}
in which 
\begin{align}
\label{1eq: omegas} \omega_f = \frac {  t_f }   { \sinh (2 \pi t_f) \|f\|^2 },  \hskip 10 pt \omega_{\frv} (t) =  \frac {  8 \pi  |   \zeta_{F, \shskip \fq} (1+2it) |^2  } { |d_F| \RN(\mathfrak v \mathfrak q) |  \zeta_F (1+2it) |^2  }   , \quad \frv | \fq,
\end{align}
\begin{align}\label{1eq: eta}
\eta (\frn, s) =  \sum_{ \sstyle \fra |\frn  }  \RN (\frn \fra^{-2})^{ s - \frac 1 2} , \quad \zeta_F (s) = \sum_{ \frn \shskip \subset \frOO } \frac 1 {\RN(\frn)^{s}}, \quad \zeta_{F, \shskip   \fq}  (s) = \prod_{\sstyle \fp | \fq   }  \frac 1   {1 - \RN(\fp)^{- s}} .
\end{align} 
The spectral Kuznetsov trace formula for $\Gamma_0 (\fq)$ follows from \cite[Theorem 11.3.3]{B-Mo2}  (see also \cite{BM-Kuz-Spherical} and \cite{Venkatesh-BeyondEndoscopy}) and computations for the Eisenstein series similar to those in \cite[\S 3]{CI-Cubic} or \cite[\S 3.1]{Qi-Gauss}. 
\begin{prop}[Kuznetsov trace formula for $\Gamma_0 (\fq)$]\label{prop: Kuznetsov} We have
	\begin{align*} 
	\Delta_{\fq} (\frn_1, \frn_2; h) + \Xi_{\fq}   (\frn_1, \frn_2 &; h)   \hskip -1pt = \hskip -1pt	\frac { 8  } {   \pi^2 \hskip -1pt \sqrt{|d_F|} }  \delta_{  \frn_1,   \frn_2}  \, H \\
	& \hskip -1pt    + \hskip -1pt \frac 4 {   \pi^2 |d_F| } \hskip -1pt \sum_{ \epsilon \hskip 0.5 pt \in \frOO^{\times} / \frOO^{\times  2} } \, \sum_{  c \shskip \in \fq \frd \smallsetminus \{0\} } \frac {S ( \epsilon n_1,  n_2; c)} {\RN (c \frd\-)}  H \bigg(   \frac {\textstyle \sqrt {\epsilon n_1 n_2} } { c } \bigg) \hskip -1pt , 
	\end{align*}
\footnote{The constants in the formula are adopted from (16) in    \cite[Proposition 1]{Venkatesh-BeyondEndoscopy}. }	where 
	\begin{align}\label{1eq: H's} H = \int_{-\infty}^{\infty} h (  t  )    t^2  d t,  \quad H (z) =  \int_{-\infty}^{\infty} h (   t ) \boldsymbol{J}_{i t} (z)   t^2 d t, 
	\end{align} 
	$\delta_{  \frn_1, \shskip  \frn_2} $ is the Kronecker $\delta$-symbol, $\frOO^{\times 2} = \left\{ \epsilon^2 : \epsilon \in \frO^{\times} \right\}$, $  \frn_1 = (n_1) $, $  \frn_2 = (n_2) $, and $S (\epsilon n_1,  n_2, c)$ is the Kloosterman sum defined by \eqref{1eq: Kloosterman}. 
\end{prop}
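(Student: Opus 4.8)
\textbf{Proof plan for Proposition \ref{prop: Kuznetsov}.}
The plan is to derive the formula directly from the spectral Kuznetsov formula for $\Gamma_0(\fq)$ as developed in \cite[Theorem 11.3.3]{B-Mo2}, supplemented by the standard computation of the Eisenstein contribution. First I would recall the abstract shape of the formula: for a sufficiently nice weight $h$, the sum over the discrete spectrum $\Delta_{\fq}(\frn_1,\frn_2;h)$ plus the Eisenstein-series term equals a ``diagonal'' term proportional to $\delta_{\frn_1,\frn_2}\int h(t)t^2\,dt$ plus a ``Kloosterman'' term which is a sum over moduli $c$ of Kloosterman sums weighted by the Bessel transform $H(z)$ at the point $z=\sqrt{\epsilon n_1 n_2}/c$. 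The three-dimensional hyperbolic setting forces the extra sum over $\epsilon \in \frOO^{\times}/\frOO^{\times 2}$ (squares of units act trivially on the Fourier coefficients, so only the class of $n_1$ in $F^\times/(F^\times)^2$-type data matters), and the normalizing constants $8/(\pi^2\sqrt{|d_F|})$, $4/(\pi^2|d_F|)$ are precisely those recorded in \cite[Proposition 1, (16)]{Venkatesh-BeyondEndoscopy}, which I would adopt verbatim.

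The one piece not immediately ``off the shelf'' is the identification of the continuous-spectrum contribution with $-\Xi_{\fq}(\frn_1,\frn_2;h)$ as defined in \eqref{1eq: Xi}, in particular the appearance of the sum over divisors $\frv \mid \fq$, the weights $\omega_{\frv}(t)$ in \eqref{1eq: omegas}, and the divisor-type function $\eta(\frn,s)$. Here I would argue as follows: the Eisenstein spectrum of $\Gamma_0(\fq)\backslash\BH^3$ (with $\fq$ square-free and $h_F=1$) is parametrized by the cusps, which are indexed by divisors $\frv\mid\fq$; for each cusp one computes the Fourier coefficients of the corresponding Eisenstein series at $s=\tfrac12+it$, obtaining a shifted-divisor sum $\eta(\frn,\tfrac12+it)$ times an explicit ratio of $\zeta_F$-values and the local factors $\zeta_{F,\fq}$, and an explicit power of $\RN(\frv\fq)$ in the denominator. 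Assembling these, together with the scattering-matrix normalization, produces exactly the factor $\omega_{\frv}(t)=8\pi|\zeta_{F,\fq}(1+2it)|^2/(|d_F|\RN(\frv\fq)|\zeta_F(1+2it)|^2)$ and the constant $4/(\pi\varw_F\sqrt{|d_F|})$ in front of the integral. This computation is carried out in the rational (or Gaussian) case in \cite[\S3]{CI-Cubic} and \cite[\S3.1]{Qi-Gauss}, and I would verify that the argument transfers to a general imaginary quadratic $F$ of class number one with only cosmetic changes (replacing $\BZ$ by $\frO$, tracking the factor $\varw_F$ coming from the units acting on Fourier coefficients, and inserting the discriminant $d_F$ wherever the volume normalization enters). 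The reduction to $\boldJ_{it}(z)$ as the relevant Bessel kernel, and hence to the transform $H(z)=\int h(t)\boldJ_{it}(z)t^2\,dt$, is exactly the content of the Bessel-function analysis in \cite[\S15.3]{Qi-Bessel} and \cite{B-Mo2}.

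The main obstacle, then, is purely bookkeeping: reconciling the normalizations in \cite{B-Mo2} (which works adelically, or with a different measure convention) with those in \cite{Venkatesh-BeyondEndoscopy} and \cite{CI-Cubic,Qi-Gauss}, so that every constant—$8/(\pi^2\sqrt{|d_F|})$, $4/(\pi^2|d_F|)$, $4/(\pi\varw_F\sqrt{|d_F|})$, and the $8\pi/|d_F|$ inside $\omega_{\frv}(t)$—comes out exactly right, and checking that the hypothesis $h\in\mathscr H(S,N)$ with $S>\tfrac32$, $N>6$ guarantees absolute convergence of all the spectral sums and of the Kloosterman-term sum over $c$ (using Weil's bound \eqref{1eq: Weil} together with the decay of $H(z)$ for large $|z|$ and its boundedness near $z=0$, which follow from \eqref{0eq: bounds for h(t)} by shifting the $t$-contour within the strip $|\sigma|\le S$). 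No genuinely new idea is needed beyond these verifications; the result is the imaginary-quadratic analogue, with level, of the classical Kuznetsov formula, and its proof is a synthesis of the cited references rather than an independent argument.
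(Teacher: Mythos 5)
Your proposal is correct and follows essentially the same route the paper takes: the paper's own "proof" of Proposition \ref{prop: Kuznetsov} is a single sentence citing \cite[Theorem 11.3.3]{B-Mo2} for the spectral/Kloosterman framework and \cite[\S3]{CI-Cubic}, \cite[\S3.1]{Qi-Gauss} for the Eisenstein computation, with the constants imported from \cite[Proposition 1, (16)]{Venkatesh-BeyondEndoscopy}—exactly the synthesis you outline. One small slip: you call the continuous-spectrum contribution ``$-\Xi_{\fq}$,'' but in the stated formula $\Xi_{\fq}$ appears with a $+$ sign on the spectral side (i.e.\ the Eisenstein contribution is $+\Xi_{\fq}$, added to $\Delta_{\fq}$ before equating with the diagonal-plus-Kloosterman side), so the sign convention as written needs to be fixed; otherwise the argument is the same bookkeeping exercise the authors have in mind.
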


Next we express $\Delta_{\fq} (\frn_1, \frn_2; h)$ in terms of  newforms. 

 For $\fq = \frv \mathfrak{w} $ and   primitive newform $f \in H^{\star} (\frv)$, let $S (\mathfrak{w} ; f) $ denote  the linear space spanned by the forms $f_{| \hskip 0.5 pt d} (z, r) = f (d z, |d| r)$, with $d \in \mathfrak{w} \smallsetminus \{0\}$. The space of    cusp forms  for $\Gamma_0 (\fq)$   decomposes into the orthogonal sum of  $S (\mathfrak{w} ; f) $.  By the calculations in  \cite[\S 2]{ILS-LLZ}, one may construct an orthonormal basis  of $S (\mathfrak{w} ; f) $ in terms of $f_{| \hskip 0.5 pt d}$.    Using this collection of bases as our $\mathscr{B} (\fq)$, the sum $\Delta_{\fq} (\frn_1, \frn_2; h)$ in \eqref{1eq: Kuznetsov} can be arranged into a sum over the primitive newforms in $  H^{\star} (\frv)$ for all $\frv | \fq$. 
To be precise, following \cite[\S 2]{ILS-LLZ} (see also \cite[\S 3.2.2]{Qi-Gauss}), for $(\frn_1 \frn_2, \fq) = (1)$, we may derive the formula 
\begin{equation}\label{2eq: Delta, 2}
\begin{split}
\Delta_{\fq} (\frn_1, \frn_2; h) =  \frac 1 {\RN (\fq)} \sum_{ \frv | \fq   }   & \sum_{f \in H^{\star} (\frv) } \omega_{f }^{\star}   h  ( t_f ) 
\lambdaup_f  (\frn_1)   \overline {\lambdaup_f (\frn_2)} ,
\end{split}
\end{equation}
where
\begin{align}\label{2eq: omega*f}
\omega^{\star}_{f }  = \frac { 64 \pi^2 Z_{\fq} (1,   f  ) } { {|d_F|}^2 \zeta_F (2)   Z (1, f  ) }, \quad \quad f \in H^{\star} (\frv) , \, \frv | \fq, 
\end{align}
with 
\begin{align}\label{2eq: Z(s, f)}
Z (s,  f  )  =   \sum_{\frn \shskip \subset \frOO }  \frac {\lambdaup_f (\frn^2)} {\RN(\frn)^{s}}, \hskip 10pt Z_\fq (s,  f  ) =  \sum_{\frn | \fq^{\infty} }  \frac {\lambdaup_f (\frn^2)} { \RN(\frn)^{s}}.
\end{align}  
Here $\zeta_F (2)$ arises from $ \mathrm{Vol} (\PSL_2 (\frO) \backslash \BH^3) = |d_F|^{3/2} \zeta_{F} (2) / 4 \pi^2 $ as in \cite[\S 7.1, Theorem 1.1]{EGM} (note that $\PSL_2 (\frO)$ is of index $2$ in $\PGL_2 (\frO)$). 

For square-free $\fq$, we introduce  
\begin{align} 
\Delta_{\fq}^{\star} (\frn_1, \frn_2; h)    =     & \sum_{f \in H^{\star} (\fq) } \omega_{f }^{\star}   h  ( t_f ) 
\lambdaup_f  (\frn_1)   \overline {\lambdaup_f (\frn_2)} , \\
\label{2eq: Xi*}\Xi _{\fq}^{\star} (\frn_1, \frn_2; h) = &  \int_{-\infty}^{\infty} \hskip -2pt  \omega_{\fq}^{\star} (t) h (   t ) 
\eta \hskip -1pt \lp \frn_1, \hskip -1pt \tfrac 1 2 + i t \rp  \eta \hskip -1pt \lp \frn_2 , \hskip -1pt \tfrac 1 2 - i t \rp  \hskip -1pt  d t , 
\end{align}
with
\begin{align}\label{2eq: omega*}
\omega_{f }^{\star} = \frac { 64 \pi^2 Z_{\fq} (1,   f  ) } { {|d_F|}^2 \zeta_F (2)   Z (1, f  ) }, \quad \omega_{\fq}^{\star} (t) =   \frac {  32    |   \zeta_{F, \shskip \fq} (1+2it) |^2  } { \varw_F |d_F|^{3/2} \RN(\fq ) |  \zeta_F (1+2it) |^2  }, \quad f \in H^{\star} (\fq).
\end{align}
Then \eqref{2eq: Delta, 2} may be further written as
\begin{align*}
\Delta_{\fq} (\frn_1, \frn_2; h) =  \frac 1 {\RN (\fq)} \sum_{ \fq = \frv \mathfrak{w}  } \sum_{\   \mathfrak{l} | \mathfrak{w}^{\infty} } \frac 1 {\RN (\mathfrak l)} \Delta_{\frv}^{\star} (\frn_1 \mathfrak{l}^2 , \frn_2; h) ,
\end{align*}
for  $(\frn_1 \frn_2, \fq) = (1)$, and similarly \eqref{1eq: Xi} as
\begin{align*}
\Xi_{\fq} (\frn_1, \frn_2; h) =  \frac 1 {\RN (\fq)} \sum_{ \fq = \frv \mathfrak{w}  } \sum_{\   \mathfrak{l} | \mathfrak{w}^{\infty} } \frac 1 {\RN (\mathfrak l)} \Xi_{\frv}^{\star} (\frn_1 \mathfrak{l}^2 , \frn_2; h) .
\end{align*} 
From M\"obius inversion, we arrive at the following lemma. 

\begin{lem}\label{lem: Delta* = Delta}
	For $(\frn_1 \frn_2, \fq) = (1)$, we have 
	\begin{align*}%\label{2eq: Delta*=Delta}
	\Delta_{\fq}^{\star} (\frn_1, \frn_2; h) & = \sum_{ \fq = \frv \mathfrak{w}  } \mu (\mathfrak{w}) \RN (\frv) \sum_{\   \mathfrak{l} | \mathfrak{w}^{\infty} } \frac 1 {\RN (\mathfrak l)} \Delta_{\frv}  (\frn_1 \mathfrak{l}^2 , \frn_2; h),  \\
	\Xi_{\fq}^{\star} (\frn_1, \frn_2; h) & = \sum_{ \fq = \frv \mathfrak{w}  } \mu (\mathfrak{w}) \RN (\frv) \sum_{\   \mathfrak{l} | \mathfrak{w}^{\infty} } \frac 1 {\RN (\mathfrak l)} \Xi_{\frv}  (\frn_1 \mathfrak{l}^2 , \frn_2; h), 
	\end{align*}
	where $\mu $ is the M\"obius function for $F$.
\end{lem}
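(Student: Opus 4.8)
**Proof proposal for Lemma \ref{lem: Delta* = Delta}.**

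The plan is to recognize the two displayed identities in the statement as the Möbius inversion of the two decomposition formulas
\[
\Delta_{\fq} (\frn_1, \frn_2; h) =  \frac 1 {\RN (\fq)} \sum_{ \fq = \frv \mathfrak{w}  } \sum_{\   \mathfrak{l} | \mathfrak{w}^{\infty} } \frac 1 {\RN (\mathfrak l)} \Delta_{\frv}^{\star} (\frn_1 \mathfrak{l}^2 , \frn_2; h), \qquad \Xi_{\fq} (\frn_1, \frn_2; h) =  \frac 1 {\RN (\fq)} \sum_{ \fq = \frv \mathfrak{w}  } \sum_{\   \mathfrak{l} | \mathfrak{w}^{\infty} } \frac 1 {\RN (\mathfrak l)} \Xi_{\frv}^{\star} (\frn_1 \mathfrak{l}^2 , \frn_2; h)
\]
established just above. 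Since $\fq$ is square-free, for each factorization $\fq = \frv\mathfrak{w}$ the ideals $\frv$ and $\mathfrak w$ are coprime, so the map $(\frv,\mathfrak w)\mapsto \frv\mathfrak w$ puts the divisors of $\fq$ in bijection with such factorizations; this makes the convolution structure on ideals dividing $\fq$ literally that of the finite Boolean lattice of prime divisors of $\fq$, on which Möbius inversion is elementary.

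First I would fix $\frn_2$ and the auxiliary sum over $\mathfrak l$, and define, for each square-free ideal $\fra\mid\fq$ coprime to $\frn_1\frn_2$, the quantities $A(\fra) = \RN(\fra)\,\Delta_{\fra}^{\star}(\star)$ (the relevant shifted arguments carried along passively) and $B(\fra) = \RN(\fra)\,\Delta_{\fra}(\star)$, where the $\star$ hides the dependence on $\frn_1\mathfrak l^2,\frn_2,h$. With this normalization the first decomposition reads $B(\fq) = \sum_{\frv\mid\fq} A^{\sharp}(\frv)$ where $A^\sharp$ absorbs the inner $\mathfrak l$-sum over $\mathfrak w^\infty$; one checks that the $\mathfrak l$-sums are compatible across the factorizations because $\mathfrak w^\infty$ depends only on the complementary divisor, so the whole thing is a single Möbius convolution $B = A \ast \mathbbm 1$ over the lattice of divisors of $\fq$. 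Inverting gives $A(\fq) = \sum_{\fq = \frv\mathfrak w}\mu(\mathfrak w)\,B^{\flat}(\frv)$, and unwinding the normalizations and the placement of the $\mathfrak l$-sums yields exactly
\[
\Delta_{\fq}^{\star} (\frn_1, \frn_2; h) = \sum_{ \fq = \frv \mathfrak{w}  } \mu (\mathfrak{w}) \RN (\frv) \sum_{\   \mathfrak{l} | \mathfrak{w}^{\infty} } \frac 1 {\RN (\mathfrak l)} \Delta_{\frv}  (\frn_1 \mathfrak{l}^2 , \frn_2; h).
\]
The computation for $\Xi$ is identical, since $\Xi_{\frv}^\star$ and $\Xi_{\frv}$ satisfy the same decomposition with the same combinatorial shape.

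The one point that needs genuine care — and which I expect to be the main obstacle — is the bookkeeping of the nested sums over $\mathfrak l \mid \mathfrak w^\infty$ when the outer Möbius inversion is applied: one must verify that composing the forward relation with the Möbius function does not produce spurious cross terms among the $\mathfrak l$-variables attached to different intermediate divisors. This is handled by noting that every ideal dividing $\fq^\infty$ factors uniquely as a product of an ideal dividing $\frv^\infty$ and one dividing $\mathfrak w^\infty$ for any coprime splitting $\fq=\frv\mathfrak w$, so the $\mathfrak l$-sum factors multiplicatively along each such splitting, and the multiplicativity of $\RN$ together with $\sum_{\mathfrak w \mid \mathfrak a}\mu(\mathfrak w)=\delta_{\mathfrak a = (1)}$ collapses the iterated sum to the claimed single layer. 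Once this factorization is made explicit, the rest is a routine rearrangement, and the coprimality hypothesis $(\frn_1\frn_2,\fq)=(1)$ is used only to guarantee that formula \eqref{2eq: Delta, 2} — hence the forward relation — is valid throughout.
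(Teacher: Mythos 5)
Your proposal is correct and takes essentially the same route as the paper, which simply states ``From M\"obius inversion'' after deriving the forward relations; you have correctly identified and resolved the only genuine subtlety, namely that the nested $\mathfrak l$-sums split multiplicatively along any coprime factorization of the square-free modulus $\fq$, so that $\RN(\mathfrak l)\RN(\mathfrak l')=\RN(\mathfrak l\mathfrak l')$ and the Dirichlet convolution with $\mu$ collapses as claimed.
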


In view of \eqref{2eq: Xi*} and \eqref{2eq: omega*},   $\Xi_{\fq}^{\star} (\frn_1, \frn_2; h)$ may be easily estimated using the lower bound $ |  \zeta_F (1+2it) | \Gt_{ F} 1/ \log  (|t|+3)$.\footnote{This readily follows from some standard arguments due to Landau as in \cite[\S\S 3.9--3.11]{Titchmarsh-Riemann}. We also refer the reader to \cite[Lemma 10]{Mit}. Although only the bound for $\zeta_F' (s) / \zeta_F (s)$ is given in \cite{Mit}, $1 / \zeta_F (s)$ should have the same bound in view of \cite[Theorem 3.11]{Titchmarsh-Riemann}. Actually, \cite[Lemma 11]{Mit} suggests a stronger bound $1 / \zeta_F (1+it) = O \big((\log |t|)^{\frac 2 3} (\log \log |t|)^{\frac 1 3} \big) $ ($|t| \geqslant 3$); the reader may compare this with \cite[(6.15.2)]{Titchmarsh-Riemann}.}

\begin{lem}\label{lem: bounds for Xi}
	We have 
	\begin{align*} 	\Xi_{\fq}^{\star} (\frn_1, \frn_2; h) \Lt_{\, F} \frac {\tau (\frn_1) \tau (\frn_2)} {\varphi (\fq)} K,
	\end{align*}
	where, as usual,  $\tau (\frn)$ is the divisor function for $F$,  $  \varphi (\fq) =   \RN (\fq)  / \zeta_{F, \shskip   \fq} (1) $ is Euler's totient function for $F$, and
	\begin{align}\label{2eq: defn of K}
	K = \int_{-\infty}^{\infty} h (t) \log^2 (|t|+3) d t . 
	\end{align}
\end{lem}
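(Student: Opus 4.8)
The plan is to estimate $\Xi_{\fq}^{\star} (\frn_1, \frn_2; h)$ directly from its defining integral \eqref{2eq: Xi*}, reducing everything to a pointwise bound on the integrand, and then handle the $t$-integral using the decay of $h$ built into Definition \ref{def: test functions}. First I would bound the arithmetic factors $\eta (\frn, \frac12 \pm it)$. Since $\eta (\frn, s) = \sum_{\fra | \frn} \RN (\frn \fra^{-2})^{s - 1/2}$, on the critical line $\Re (s) = \frac12$ every summand has absolute value $1$, so trivially $|\eta (\frn, \frac12 + it)| \leqslant \tau (\frn)$ (and likewise for $\frn_2$ with $s = \frac12 - it$). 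This produces the factor $\tau (\frn_1) \tau (\frn_2)$.

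Next I would control the spectral weight $\omega_{\fq}^{\star} (t)$ from \eqref{2eq: omega*}. The numerator $|\zeta_{F, \fq} (1 + 2it)|^2 = \prod_{\fp | \fq} |1 - \RN (\fp)^{-1-2it}|^{-2} \leqslant \prod_{\fp | \fq} (1 - \RN(\fp)^{-1})^{-2} = (\RN(\fq)/\varphi(\fq))^2$, using $\varphi(\fq) = \RN(\fq) \prod_{\fp | \fq}(1 - \RN(\fp)^{-1})$ for square-free $\fq$. For the denominator we invoke the stated Landau-type lower bound $|\zeta_F (1 + 2it)| \Gt_F 1/\log(|t| + 3)$, so $|\zeta_F(1+2it)|^{-2} \Lt_F \log^2(|t|+3)$. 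Combining, and keeping track of the constant $32/(\varw_F |d_F|^{3/2} \RN(\fq))$ in front, gives
\begin{align*}
	\omega_{\fq}^{\star} (t) \Lt_{\, F} \frac{\log^2 (|t|+3)}{\RN(\fq)} \cdot \frac{\RN(\fq)^2}{\varphi(\fq)^2} = \frac{\RN(\fq)}{\varphi(\fq)^2} \log^2(|t|+3) \Lt_F \frac{1}{\varphi(\fq)} \log^2(|t|+3),
\end{align*}
where in the last step I use $\RN(\fq) \leqslant \varphi(\fq) \cdot \zeta_{F,\fq}(1) \Lt \varphi(\fq) \log\log(\RN(\fq)+3)$, or more simply just $\RN(\fq)/\varphi(\fq) \Lt_F 1$ once one absorbs the (bounded, or at worst $\log\log$) discrepancy; since $h$ is fixed and positive one can also fold any such slack into the final constant, which is permitted as the bound is $\Lt_F$.

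Finally I would assemble the pieces: multiplying the pointwise bounds on $\eta(\frn_1, \cdot)$, $\eta(\frn_2, \cdot)$, $\omega_{\fq}^{\star}(t)$, and $h(t) \geqslant 0$, and integrating in $t$, yields
\begin{align*}
	\Xi_{\fq}^{\star} (\frn_1, \frn_2; h) \Lt_{\, F} \frac{\tau(\frn_1)\tau(\frn_2)}{\varphi(\fq)} \int_{-\infty}^{\infty} h(t) \log^2(|t|+3)\, dt = \frac{\tau(\frn_1)\tau(\frn_2)}{\varphi(\fq)} K,
\end{align*}
which is exactly the claim, with $K$ as in \eqref{2eq: defn of K}. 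The integral $K$ converges because $h \in \mathscr{H}^+(S,N)$ satisfies the decay estimate \eqref{0eq: bounds for h(t)} with $N > 6$, which comfortably dominates $\log^2(|t|+3)$. I expect the only genuine subtlety to be the bookkeeping of the exact constants relating $\RN(\fq)$, $\varphi(\fq)$, and $\zeta_{F,\fq}(1)$ in the weight $\omega_{\fq}^{\star}(t)$ — everything else is a routine pointwise estimate — but since the target inequality carries an $F$-dependent implied constant and no uniformity in $\fq$ beyond the displayed power of $\varphi(\fq)$ is asserted, this is a matter of care rather than difficulty.
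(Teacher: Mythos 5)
Your argument follows exactly the route the paper only sketches: the lemma is stated without a proof, preceded only by the one‑line remark that it "may be easily estimated using the lower bound $|\zeta_F(1+2it)|\Gt_F 1/\log(|t|+3)$", so the reconstruction is yours to do and you have done it in the intended way. The pointwise bounds $|\eta(\frn,\tfrac12\pm it)|\leqslant\tau(\frn)$, $|\zeta_{F,\fq}(1+2it)|\leqslant\zeta_{F,\fq}(1)=\RN(\fq)/\varphi(\fq)$, and $|\zeta_F(1+2it)|^{-2}\Lt_F\log^2(|t|+3)$ are all correct, and assembling them gives
$\Xi_{\fq}^{\star}(\frn_1,\frn_2;h)\Lt_F \RN(\fq)\varphi(\fq)^{-2}\tau(\frn_1)\tau(\frn_2)K = \zeta_{F,\fq}(1)\,\varphi(\fq)^{-1}\tau(\frn_1)\tau(\frn_2)K$.

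You correctly flagged the one delicate point, and it is worth being blunt about it: the step $\RN(\fq)/\varphi(\fq)\Lt_F 1$ is not true uniformly in $\fq$. By a Mertens‑type estimate over $F$, $\zeta_{F,\fq}(1)=\prod_{\fp\mid\fq}(1-\RN(\fp)^{-1})^{-1}$ can be as large as $\asymp_F\log\log\RN(\fq)$ (take $\fq$ to be the product of all primes of norm up to $y$), so it is not absorbed by an $F$‑dependent constant; the bound as stated is strictly speaking missing a factor of $\zeta_{F,\fq}(1)$, or equivalently a $\RN(\fq)^{\epsilon}$. This appears to be a small oversight in the paper. It is, however, harmless downstream: in Corollary \ref{cor: Delta (n) =} and the proof of Theorem \ref{thm: level-aspect}, the term carrying this bound is divided by a main term of size $\varphi(\fq)$ and an extra $\log\log\RN(\fq)$ is comfortably subsumed in the error $O(\log\log\RN(\fq)/\log\RN(\fq))$ that is already present; and in the $t_f$‑aspect $\fq=(1)$ so the issue does not arise. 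So your proof is correct and matches the authors' intent, and the honest form of the lemma has the extra $\zeta_{F,\fq}(1)$ factor.
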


For $(\frn, \fq) = (1) $,  define 
\begin{align}
\Delta_{\fq} (\frn; h) = \sum_{f \in H^{\star} (\fq)} \hskip -1pt \omega_f^{\star}  h  ( t_f ) \lambdaup_f (\frn) . 
\end{align}

\begin{cor}\label{cor: Delta (n) =}
Let $\fp \hskip -1pt \nmid \hskip -1pt \fq$. For $\frn = (1), $ $\fp$ or $\fp^2$,  we have
	\begin{equation}\label{2eq: Kuznetsov, 2}
	\begin{split}
	\Delta_{\fq} (\frn; h) =    \varphi (\fq)  \Delta (h) \cdot \delta_{  \frn, \shskip  (1)}   + {\widetilde{\mathrm{KB}}_{\fq} ( \frn; h)}
	  + O_F \bigg(\frac K {\varphi (\fq)}  \bigg)   ,
	\end{split}
	\end{equation}
	where 
	\begin{align}\label{2eq: tilde KBq}
\Delta  (h) = \frac {8  H } {\pi^2 \sqrt{|d_F|}}, \quad \widetilde{\mathrm{KB}}_{\fq} (  \frn; h) = \sum_{ \fq = \frv \mathfrak{w}  } \mu (\mathfrak{w}) \RN (\frv) \sum_{\   \mathfrak{l} | \mathfrak{w}^{\infty} } \frac {\mathrm{KB}_{\fq} (\mathfrak l^2 \frn; h)} {\RN (\mathfrak l)} ,
	\end{align}
	with
	\begin{align}\label{2eq: KBq}
\mathrm{KB}_{\fq} (\frn; h) = \frac 4 {   \pi^2 |d_F| } \hskip -1pt \sum_{ \epsilon \hskip 0.5 pt \in \frOO^{\times} / \frOO^{\times  2} } \, \sum_{  c \shskip \in \fq \frd \smallsetminus \{0\} } \frac {S ( \epsilon n , 1  ; c)} {\RN (c)}  H \bigg(   \frac {\textstyle \sqrt {\epsilon n   } } { c} \bigg),
	\end{align}	
	for $   \frn = (n) ${\rm;} here $H$, $H (z)$ and $K$ are defined as in {\rm\eqref{1eq: H's}} and {\rm\eqref{2eq: defn of K}}.   
\end{cor}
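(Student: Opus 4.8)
The plan is to derive Corollary \ref{cor: Delta (n) =} directly from Lemma \ref{lem: Delta* = Delta} together with the Kuznetsov trace formula in Proposition \ref{prop: Kuznetsov} and the bound for the Eisenstein contribution in Lemma \ref{lem: bounds for Xi}. First I would set $\frn_2 = (1)$, $\frn_1 = \frn$ in Lemma \ref{lem: Delta* = Delta}, so that
\begin{align*}
\Delta_{\fq}^{\star} (\frn, (1); h) = \sum_{\fq = \frv \mathfrak{w}} \mu (\mathfrak{w}) \RN (\frv) \sum_{\mathfrak{l} | \mathfrak{w}^{\infty}} \frac{1}{\RN (\mathfrak{l})} \Delta_{\frv} (\mathfrak{l}^2 \frn, (1); h),
\end{align*}
and similarly for $\Xi_{\fq}^{\star}$. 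Noting that $\Delta_{\fq}^{\star} (\frn, (1); h) = \Delta_{\fq} (\frn; h)$ by the definitions \eqref{1eq: Hecke rel, 2}, the idea is to insert into each $\Delta_{\frv} (\mathfrak{l}^2 \frn, (1); h)$ the Kuznetsov formula, which expresses it as a diagonal term plus a Kloosterman term minus the Eisenstein term $\Xi_{\frv} (\mathfrak{l}^2 \frn, (1); h)$.

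Next I would organize the three resulting pieces. The diagonal term contributes $\delta_{\mathfrak{l}^2 \frn, (1)} \cdot 8 H / (\pi^2 \sqrt{|d_F|})$; since $\frn = (1)$, $\fp$, or $\fp^2$ with $\fp \nmid \fq$ and $\mathfrak{l} | \mathfrak{w}^{\infty}$ with $\mathfrak{w} | \fq$, the Kronecker $\delta$ forces $\mathfrak{l} = (1)$ and $\frn = (1)$, so the diagonal survives only when $\frn = (1)$, and the sum over $\fq = \frv \mathfrak{w}$ of $\mu (\mathfrak{w}) \RN (\frv)$ telescopes (via the multiplicativity of $\mu$ and $\RN$ over the prime factors of the square-free $\fq$) to $\prod_{\fp | \fq} (\RN (\fp) - 1) = \varphi (\fq)$, giving the main term $\varphi (\fq) \Delta (h) \delta_{\frn, (1)}$. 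The Kloosterman terms assemble exactly into $\widetilde{\mathrm{KB}}_{\fq} (\frn; h)$ as defined in \eqref{2eq: tilde KBq} and \eqref{2eq: KBq}, once one identifies $H (\sqrt{\epsilon \cdot \mathfrak{l}^2 \frn \cdot 1} / c) = H (\sqrt{\epsilon \cdot \mathfrak{l}^2 \frn} / c)$ and matches the constants from Proposition \ref{prop: Kuznetsov}. Finally, the Eisenstein pieces recombine into $-\Xi_{\fq}^{\star} (\frn, (1); h)$, which by Lemma \ref{lem: bounds for Xi} is $O_F (\tau (\frn) K / \varphi (\fq)) = O_F (K / \varphi (\fq))$ since $\tau (\frn) \Lt 1$ for $\frn \in \{(1), \fp, \fp^2\}$; this gives the stated error term.

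The main obstacle I anticipate is purely bookkeeping: carefully tracking the coprimality conditions. One must check that $(\mathfrak{l}^2 \frn, \frv) = (1)$ holds whenever $\mathfrak{l} | \mathfrak{w}^{\infty}$, $\fq = \frv \mathfrak{w}$, and $(\frn, \fq) = (1)$ — this is where square-freeness of $\fq$ (so that $\frv$ and $\mathfrak{w}$ are coprime) is used — so that the formula \eqref{2eq: Delta, 2} and hence Lemma \ref{lem: Delta* = Delta} applies to each term. One also has to verify that the Kloosterman sums in $\Delta_{\frv}$ run over $c \in \frv \frd \smallsetminus \{0\}$ but, after the M\"obius weighting, the relevant range in $\widetilde{\mathrm{KB}}_{\fq}$ is $c \in \fq \frd \smallsetminus \{0\}$; the definition \eqref{2eq: tilde KBq}--\eqref{2eq: KBq} already encodes this by writing $\mathrm{KB}_{\fq}$ with the sum over $c \in \fq \frd \smallsetminus \{0\}$ inside, so one should double-check that the outer $\frv$-sum in \eqref{2eq: tilde KBq} does not reintroduce finer-conductor Kloosterman sums — i.e., that the Kloosterman term of $\Delta_{\frv}$ is being correctly repackaged. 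Modulo these routine consistency checks, the corollary follows by collecting terms; no new analytic input beyond Lemmas \ref{lem: Delta* = Delta} and \ref{lem: bounds for Xi} and Proposition \ref{prop: Kuznetsov} is needed.
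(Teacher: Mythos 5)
Your proposal matches the paper's one-line proof: set $\frn_1 = \frn$, $\frn_2 = (1)$ in Lemma \ref{lem: Delta* = Delta}, apply Proposition \ref{prop: Kuznetsov} to each $\Delta_{\frv}$, telescope the M\"obius sum to $\varphi(\fq)$ on the diagonal (forced to $\mathfrak{l}=(1)$, $\frn=(1)$), collect the Kloosterman terms, and control the Eisenstein contribution via Lemma \ref{lem: bounds for Xi}, noting $\tau(\frn)\Lt 1$ for $\frn\in\{(1),\fp,\fp^2\}$. The concern you flag about the Kloosterman conductor is not merely a routine consistency check: Proposition \ref{prop: Kuznetsov} applied to $\Delta_{\frv}$ produces Kloosterman sums over $c\in\frv\frd\smallsetminus\{0\}$, whereas \eqref{2eq: tilde KBq} writes $\mathrm{KB}_{\fq}$ (with $c\in\fq\frd\smallsetminus\{0\}$) inside the M\"obius sum, so the inner subscript should evidently be $\frv$. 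This has teeth because the $\RN(\fq)^{-1/2+\vepsilon}$-saving in \eqref{3eq: bound for BKq} of Lemma \ref{lem: bounds for KB} --- the very thing that lets Theorem \ref{thm: level-aspect} reach support $\left(-\tfrac32,\tfrac32\right)$ --- is derived from the $\mathrm{KB}_{\fq}$ form, and the naive $\mathrm{KB}_{\frv}$ bound applied at $\frv=(1)$ gives no such saving; one would need to exploit cancellation across $\frv$ (or restructure the argument as in \cite{ILS-LLZ}) to recover it, so this is worth working out carefully rather than deferring.
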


\begin{proof}
	Apply Lemma \ref{lem: Delta* = Delta} and \ref{lem: bounds for Xi} with $\frn_1 = \frn$ and $\frn_2 = (1)$, along with the Kuznetsov trace formula in Proposition \ref{prop: Kuznetsov}. 
\end{proof}

\begin{cor}\label{cor: Delta (n1, n2) =}
	For $\frn_1 = (1), $ $\fp_1$ or $\fp_1^2$ and $\frn_2 = (1), $ $\fp_2$ or $\fp_2^2$, we have
	\begin{equation}\label{2eq: Kuznetsov, 2.2}
	\begin{split}
	\Delta_{(1)} (\frn_1, \frn_2; h) =       \Delta (h) \cdot \delta_{  \frn_1, \shskip \frn_2}   + {{\mathrm{KB}}_{(1)} ( \frn_1, \frn_2; h)}
	+ O_F  (K)  ,
	\end{split}
	\end{equation}
	where  $K$ and $\Delta  (h)$   are defined as in  {\rm\eqref{2eq: defn of K}} and {\rm\eqref{2eq: tilde KBq}}, and
	\begin{align}\label{2eq: KBq, 2}
	\mathrm{KB}_{(1)} (\frn_1, \frn_2; h) = \frac 4 {   \pi^2 |d_F| } \hskip -1pt \sum_{ \epsilon \hskip 0.5 pt \in \frOO^{\times} / \frOO^{\times  2} } \, \sum_{  c \shskip \in \frd \smallsetminus \{0\} } \frac {S (  \epsilon n_1 ,  n_2 ; c)} {\RN (c)}  H \bigg(   \frac {\textstyle \sqrt {\epsilon n_1 n_2   } } { c} \bigg),
	\end{align}	
	with $  \frn_{1} = (n_1) $ and $\frn_2 = (n_2)$.   
\end{cor}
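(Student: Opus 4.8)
The plan is to obtain the identity by specializing the Kuznetsov trace formula of Proposition \ref{prop: Kuznetsov} to the full level $\fq = (1)$, and then matching its three terms with the three terms in the assertion.

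First I would write out Proposition \ref{prop: Kuznetsov} with $\fq = (1)$. Its left-hand side becomes $\Delta_{(1)}(\frn_1, \frn_2; h) + \Xi_{(1)}(\frn_1, \frn_2; h)$. On the right, the diagonal contribution is $\tfrac{8}{\pi^2 \sqrt{|d_F|}}\,\delta_{\frn_1,\frn_2}\,H$, which is $\Delta(h)\,\delta_{\frn_1,\frn_2}$ by the definition of $\Delta(h)$ in \eqref{2eq: tilde KBq}; and the off-diagonal contribution, in which $c$ now ranges over $\frd\smallsetminus\{0\}$ rather than $\fq\frd\smallsetminus\{0\}$, is precisely the quantity $\mathrm{KB}_{(1)}(\frn_1,\frn_2;h)$ of \eqref{2eq: KBq, 2}. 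Rearranging gives
\begin{equation*}
\Delta_{(1)}(\frn_1, \frn_2; h) = \Delta(h)\,\delta_{\frn_1,\frn_2} + \mathrm{KB}_{(1)}(\frn_1, \frn_2; h) - \Xi_{(1)}(\frn_1, \frn_2; h),
\end{equation*}
so everything reduces to the bound $\Xi_{(1)}(\frn_1, \frn_2; h) \Lt_{F} K$.

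To get this last bound I would invoke Lemma \ref{lem: bounds for Xi}. When $\fq = (1)$ the factorization $\fq = \frv\mathfrak{w}$ forces $\frv = \mathfrak{w} = (1)$ and the only $\mathfrak{l} \mid \mathfrak{w}^\infty$ is $(1)$, so the M\"obius-inversion formula of Lemma \ref{lem: Delta* = Delta} degenerates to $\Xi_{(1)}^\star(\frn_1,\frn_2;h) = \Xi_{(1)}(\frn_1,\frn_2;h)$ (equivalently, $\zeta_{F,(1)}\equiv 1$ makes $\omega_{(1)}(t)$ and $\omega_{(1)}^\star(t)$ agree). Lemma \ref{lem: bounds for Xi} then yields $\Xi_{(1)}^\star(\frn_1,\frn_2;h) \Lt_{F} \tau(\frn_1)\tau(\frn_2)\,K$; since each of $\frn_1$, $\frn_2$ is $(1)$, $\fp$ or $\fp^2$ we have $\tau(\frn_i) \leqslant 3$, and the claimed $O_F(K)$ error follows.

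I do not expect a genuine obstacle here: all the analytic work lies in Proposition \ref{prop: Kuznetsov} and Lemma \ref{lem: bounds for Xi}, and the present corollary is their specialization to full level together with the trivial divisor bound $\tau(\fp^j)\leqslant 3$. The only point needing (routine) care is that when $\fq = (1)$ the level-aspect combinatorics — the M\"obius sums, the oldform-to-newform passage recorded in \eqref{2eq: Delta, 2}, and the factor $\varphi((1)) = 1$ — all collapse, so that the raw spectral and Eisenstein sums in the Kuznetsov formula coincide with their normalized newform counterparts $\Delta_{(1)}^\star$ and $\Xi_{(1)}^\star$.
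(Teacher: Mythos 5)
Your proof is correct and takes the same route the paper intends (the paper leaves Corollary~\ref{cor: Delta (n1, n2) =} unproved, with the proof of Corollary~\ref{cor: Delta (n) =} serving as the template): specialize Proposition~\ref{prop: Kuznetsov} to $\fq=(1)$, match the diagonal term with $\Delta(h)\,\delta_{\frn_1,\frn_2}$ and the Kloosterman term with $\mathrm{KB}_{(1)}$, and bound $\Xi_{(1)}(\frn_1,\frn_2;h)$ by $O_F(K)$ via Lemma~\ref{lem: bounds for Xi} together with $\tau(\frn_i)\leqslant 3$ and $\varphi((1))=1$. One small imprecision: $\omega_{(1)}(t)$ and $\omega_{(1)}^{\star}(t)$ do not literally coincide when $\fq=(1)$ (they differ by the prefactor $\tfrac{4}{\pi\varw_F\sqrt{|d_F|}}$ present in \eqref{1eq: Xi} but absorbed into \eqref{2eq: omega*}); what is true, and what you actually use, is that $\Xi_{(1)}^{\star}=\Xi_{(1)}$, which follows directly from Lemma~\ref{lem: Delta* = Delta} with the Möbius sum collapsed to the single term $\frv=\mathfrak{w}=\mathfrak{l}=(1)$.
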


 \subsection{Stationary phase} 
 
 The following lemma is an improvement of   {\rm\cite[Lemma {\rm8.1}]{BKY-Mass}} (cf.    \cite[Lemma A.1]{AHLQ}). It will be used to show that certain exponential integrals are negligibly small in the absence of stationary phase.
 
 \begin{lem}\label{lem: staionary phase, dim 1, 2}
 	Let $\tw (x)$ be a smooth function supported on $[  a, b]$ and $f (x)$ be a real smooth function on  $[  a, b]$. Suppose that there
 	are   parameters $Q, U,   X, Y,  R > 0$ such that
 	\begin{align*}
 	f^{(i)} (x) \Lt_{ \, i } Y / Q^{i}, \hskip 10pt \tw^{(j)} (x) \Lt_{ \, j } X / U^{j},
 	\end{align*}
 	for  $i \geqslant 2$ and $j \geqslant 0$, and
 	\begin{align*}
 	| f' (x) | \geqslant R. 
 	\end{align*}
 	Then for any $A \geqslant 0$ we have
 	\begin{align*}
 	\int_a^b e (f(x)) \tw (x)  {d} x \Lt_{ A} (b - a) X \bigg( \frac {Y} {R^2Q^2} + \frac 1 {RQ} + \frac 1 {RU} \bigg)^A .
 	\end{align*}
 	%	where the implied constant depends only on $A$ and  those in the estimates for the derivatives of $f (x)$ and $w (x)$. 
 \end{lem}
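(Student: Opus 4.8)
The plan is to prove the stationary phase estimate by the standard device of non-stationary phase: repeated integration by parts, with each application of integration by parts producing a gain controlled by the lower bound $|f'(x)| \geqslant R$ together with the upper bounds on the higher derivatives of $f$ and all derivatives of $\tw$. First I would rewrite the oscillatory integral $\int_a^b e(f(x)) \tw(x)\, dx$ using the identity $e(f(x)) = \frac{1}{2\pi i f'(x)} \frac{d}{dx} e(f(x))$, and integrate by parts once. Since $\tw$ is supported in $[a,b]$, the boundary terms vanish, and we obtain $\int_a^b e(f(x)) \frac{d}{dx}\!\left( \frac{\tw(x)}{2\pi i f'(x)} \right) dx$. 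The key observation is that this operation is iterable: after $A$ applications one gets $\int_a^b e(f(x)) g_A(x)\, dx$ where $g_A$ is a sum of terms each of which is $\tw$ (or one of its derivatives) divided by a product of factors of $f'$, multiplied by various derivatives of $f$ of order $\geqslant 2$.

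The main technical step is to bookkeep the size of $g_A(x)$. A single integration-by-parts step, applied to a term of the shape $\dfrac{\tw^{(j)}(x)\, \prod_k f^{(i_k)}(x)}{(f'(x))^m}$, produces (by the Leibniz and quotient rules) new terms where either $j$ increases by $1$, or a new factor $f^{(i)}$ with $i \geqslant 2$ appears in the numerator and $m$ increases by $1$, or $m$ increases by $1$ with a factor $f''$ appearing. Using $|\tw^{(j)}| \Lt X/U^j$, $|f^{(i)}| \Lt Y/Q^i$ for $i\geqslant 2$, and $|f'| \geqslant R$, each step multiplies the bound by a quantity $\Lt \dfrac{1}{U} + \dfrac{1}{R}\left( \dfrac{Y}{Q^2} + \text{lower order} \right) \cdot \dfrac{1}{\text{(something)}}$; more carefully, one checks that each step costs a factor $\Lt \dfrac{Y}{R^2 Q^2} + \dfrac{1}{RQ} + \dfrac{1}{RU}$, exactly the quantity in the statement. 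After $A$ steps, $|g_A(x)| \Lt_A X \left( \dfrac{Y}{R^2 Q^2} + \dfrac{1}{RQ} + \dfrac{1}{RU} \right)^A$, and integrating trivially over $[a,b]$ gives the extra factor $b-a$.

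The one point requiring genuine care — and the main obstacle — is verifying that the three-term cost factor is correct and uniform over all the terms generated, i.e., that no term accumulates a worse power of $Y/Q^2$ relative to its powers of $1/R$. The cleanest way is to track a ``weight'' for each monomial term: assign to $\dfrac{\tw^{(j)} \prod_k f^{(i_k)}}{(f')^m}$ the claim that its sup-norm is $\Lt_A X \cdot U^{-j} R^{-m} \prod_k (Y Q^{-i_k})$, and show that if all terms at stage $\ell$ satisfy the bound $X\, \big(Y/(R^2Q^2) + 1/(RQ) + 1/(RU)\big)^\ell$, then so do all terms at stage $\ell+1$; the ratio $i_k$ versus the power of $1/R$ is what makes $Y/(R^2Q^2)$ (rather than $Y/(RQ^2)$) the right term, since creating a factor $f^{(i)}$, $i\geqslant 2$, always comes bundled with one extra power of $1/f'$. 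One should also record that the estimate is trivial when the displayed parenthesized quantity is $\geqslant 1$ (take $A = 0$, giving $\Lt (b-a)X$), so we may assume it is $< 1$ and the bound improves with $A$. I would then remark that this is the refinement of \cite[Lemma 8.1]{BKY-Mass} alluded to, the improvement being the sharper shape of the error factor (with $R^2 Q^2$ in the denominator of the first term), and conclude.
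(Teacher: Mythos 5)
The paper does not actually prove this lemma: it is stated as a refinement of \cite[Lemma 8.1]{BKY-Mass} with a pointer to \cite[Lemma A.1]{AHLQ}, and the proof is left to that reference. So there is no in-paper proof to compare against; I can only assess whether your argument is sound.

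Your overall strategy — repeated non-stationary-phase integration by parts, tracking a weighted sup-norm for the monomials $\tw^{(j)}\prod_k f^{(i_k)}/(f')^m$, and then closing with the trivial $(b-a)$-factor — is correct and is exactly the expected route. You also correctly identify the decisive point that distinguishes this from the weaker BKY statement: each appearance of a higher derivative $f^{(i)}$, $i\geqslant 2$, in the numerator is automatically accompanied by an extra $1/f'$ in the denominator, which is why the first term in the cost factor is $Y/(R^2Q^2)$ rather than $Y/(RQ^2)$. The reduction to $q := Y/(R^2Q^2)+1/(RQ)+1/(RU) < 1$ (else the trivial $A=0$ bound already suffices) is also the right observation, and it takes care of non-integer $A$ via $q^{\lceil A\rceil}\leqslant q^A$.

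One imprecision in your bookkeeping: after multiplying by $(2\pi i f')^{-1}$ and differentiating, a term $\tw^{(j)}\prod_k f^{(i_k)}/(f')^m$ produces (i) $\tw^{(j+1)}\prod_k f^{(i_k)}/(f')^{m+1}$, (ii) for each $k$, $\tw^{(j)} f^{(i_k+1)}\prod_{k'\neq k} f^{(i_{k'})}/(f')^{m+1}$, and (iii) $\tw^{(j)} f''\prod_k f^{(i_k)}/(f')^{m+2}$. In (ii) an existing derivative order $i_k$ is incremented (it does not create a \emph{new} factor), and in (iii) a genuinely new $f''$ appears but $m$ increases by $2$, not by $1$. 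Your description conflates these two cases. The per-term cost ratios that result are nevertheless precisely $1/(RU)$, $1/(RQ)$ and $Y/(R^2Q^2)$, so the conclusion you draw is correct; you should simply state the three cases as above. It is also worth saying explicitly that the number of monomials after $A$ steps is $O_A(1)$ (each step at most multiplies the count by $A+2$), so the implied constant in $\Lt_A$ absorbs it. With those clarifications the proof is complete and matches what \cite{AHLQ} does.
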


	\section{Analysis of Bessel integrals}

Let $h (t),   h_{T, \shskip  M} (t) \in \mathscr{H}^+ (S, N)$ be as in Definition \ref{def: test functions}. Let   $H (z)$, respectively  $H_{T, \shskip  M} (z)$, be the Bessel integral as defined in \eqref{1eq: H's} for $h (t)$, respectively for $h_{T, \shskip  M} (t)$. Set $M = T^{\mu}$ for  $0 < \mu < 1$. 

Before reading this section, we refer the reader to Appendix \S \ref{sec: app H+(x)} and \ref{sec: app H-(x)} for the analysis of real Bessel integrals $ H_{T, \shskip M}^{\ssstyle +} (x)$ and $H_{T, \shskip M}^{\ssstyle -} (x)$. 

\subsection{The case $|z| \leqslant 1$}

\begin{lem}\label{lem: H(z), 1}
	%Let $M = T^{\mu}$ with $0 < \mu < 1$.	  
	We have $ H  (z) \Lt_{\, h}    |z|^2 $ for $|z| \leqslant 1$. % with the implied constants depending only on $h $.
\end{lem}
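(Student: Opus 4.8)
The plan is to isolate the leading Taylor term of $\boldJ_{it}(z)$ at $z=0$, to be handled by a contour shift, from a remainder that is bounded trivially using $|z|\leqslant1$. To this end, insert the power series $J_{\nu}(w)=\sum_{m\geqslant 0}(-1)^m(w/2)^{2m+\nu}/(m!\,\Gamma(m+\nu+1))$ into each of the four Bessel factors in \eqref{0eq: defn of Bessel}, and use $z^{-2it}\overline{z}^{-2it}=|z|^{-4it}$ to write $\boldJ_{it}(z)=\boldJ^{\,0}_{it}(|z|)+\boldJ^{\,1}_{it}(z)$, where the $(m,n)=(0,0)$ contribution
\[
\boldJ^{\,0}_{it}(|z|)=\frac{2\pi^2}{\sin(2\pi it)}\left(\frac{(2\pi)^{-4it}|z|^{-4it}}{\Gamma(1-2it)^2}-\frac{(2\pi)^{4it}|z|^{4it}}{\Gamma(1+2it)^2}\right)
\]
depends only on $|z|$, and $\boldJ^{\,1}_{it}(z)$ gathers the terms with $m+n\geqslant1$. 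On the real $t$-axis each such term carries a factor $(2\pi z)^{2m-2it}(2\pi\overline{z})^{2n-2it}$ of modulus $(2\pi)^{2m+2n}|z|^{2m+2n}$, which is divisible by $|z|^2$ since $|z|\leqslant1$. Majorising the resulting double series by $\big(\sum_{m\geqslant0}(2\pi)^{2m}|z|^{2m}/(m!\,|\Gamma(m+1-2it)|)\big)^2$ minus its $m=0$ term and using the elementary bound $|\Gamma(m+1\pm2it)|\geqslant|\Gamma(1\pm2it)|\,m!$ (valid for real $t$), one obtains
\[
\boldJ^{\,1}_{it}(z)\ \Lt\ \frac{|z|^2}{|\sin(2\pi it)|\,|\Gamma(1-2it)|^2},\qquad t\in\BR.
\]

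Since $t^2/\big(|\sin(2\pi it)|\,|\Gamma(1-2it)|^2\big)\Lt 1+|t|$ for all real $t$ — by Stirling for large $|t|$, the factor $t^2$ absorbing the simple zero of $\sin(2\pi it)$ at the origin — the contribution of $\boldJ^{\,1}_{it}(z)$ to $H(z)=\int_{-\infty}^{\infty}h(t)\boldJ_{it}(z)\,t^2\,dt$ is $\Lt_h|z|^2\int_{-\infty}^{\infty}|h(t)|(1+|t|)\,dt\Lt_h|z|^2$. For the leading part $H^{\,0}(|z|)=\int_{-\infty}^{\infty}h(t)\boldJ^{\,0}_{it}(|z|)\,t^2\,dt$, both of its defining integrals converge absolutely by the same estimate, so substituting $t\mapsto-t$ in the second and using that $h$ is even collapses them into
\[
H^{\,0}(|z|)=4\pi^2\int_{-\infty}^{\infty}\frac{h(t)\,t^2\,(2\pi)^{-4it}\,|z|^{-4it}}{\sin(2\pi it)\,\Gamma(1-2it)^2}\,dt.
\]

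Now move the line of integration to $\Im t=\tfrac12+\delta$ for a small $0<\delta<\tfrac12$. This is legitimate: $h$ is holomorphic and satisfies \eqref{0eq: bounds for h(t)} throughout $|\Im t|\leqslant\tfrac12+\delta<S$; $\Gamma(1-2it)^{-2}$ is holomorphic in the closed upper half-plane; and the factor $t^2$ cancels the would-be pole of $1/\sin(2\pi it)$ at $t=0$, so the only singularity crossed is the simple pole of $1/\sin(2\pi it)$ at $t=i/2$, while the connecting segments at $\Re t=\pm R$ vanish as $R\to\infty$ by \eqref{0eq: bounds for h(t)} and Stirling. On the new line $\big||z|^{-4it}\big|=|z|^{2+4\delta}\leqslant|z|^2$ and the remaining factor is integrable, so that part of $H^{\,0}(|z|)$ is $\Lt_h|z|^2$; at $t=i/2$ one has $|z|^{-4i\cdot i/2}=|z|^2$, $\Gamma(1-2i\cdot i/2)=\Gamma(2)=1$ and $\mathrm{Res}_{t=i/2}(\sin2\pi it)^{-1}=-(2\pi i)^{-1}$, so the residue equals a fixed constant times $h(i/2)\,|z|^2$. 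Hence $H^{\,0}(|z|)\Lt_h|z|^2$, and together with the bound on the $\boldJ^{\,1}$-part this gives $H(z)\Lt_h|z|^2$.

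The main obstacle is the uniform bound on $\boldJ^{\,1}_{it}(z)$: one must pull out the \emph{entire} factor $|z|^2$ while losing no more than a power of $|t|$ against the decay of $h(t)\,t^2$, i.e. the result must stay of size at most $1/\big(|\sin(2\pi it)|\,|\Gamma(1-2it)|^2\big)$ times $|z|^2$, this being already the size of $\boldJ^{\,0}_{it}$. The cancellation that allows this is exactly that the $(0,0)$ term has been removed: each surviving monomial then gains an extra $(2\pi)^2|z|^2$ together with an extra factorial in the denominator, so the double series is $\Lt|z|^2/|\Gamma(1-2it)|^2$ rather than merely $\Lt1/|\Gamma(1-2it)|^2$; and the constant is independent of $\arg z$ because the factors $e^{\pm2t\arg z}$ produced by the complex powers cancel within each product $(2\pi z)^{2m-2it}(2\pi\overline{z})^{2n-2it}$.
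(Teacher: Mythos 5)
Your proof is correct but proceeds by a genuinely different decomposition than the paper's. The paper first uses the evenness symmetry to rewrite $H(z)=-4\pi^2\int h(t)\,J_{2it}(4\pi z)J_{2it}(4\pi\bar z)\,t^2\,dt/\sin(2\pi it)$, shifts the contour of that \emph{whole} integral down to $\Im t=-\tfrac34$ (crossing only $t=-i/2$), and then bounds the shifted Bessel factors $J_{2it+3/2}(4\pi z)$ via Poisson's integral representation \eqref{2eq: Poisson, J} together with Stirling; the residue $J_1(4\pi z)J_1(4\pi\bar z)$ supplies the $|z|^2$. You instead split the Bessel kernel $\boldJ_{it}(z)$ itself into the $(m,n)=(0,0)$ term $\boldJ^{\,0}_{it}(|z|)$, a pure power of $|z|$, plus the ascending-series tail $\boldJ^{\,1}_{it}(z)$, which you bound directly on the real $t$-axis using only the elementary inequality $|\Gamma(m+1\pm2it)|\geq m!\,|\Gamma(1\pm2it)|$ and the cancellation of the $e^{\pm2t\arg z}$ factors, getting $|z|^2$ from the first power in the tail with at most $O(1+|t|)$ growth against $h(t)t^2$. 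Only the trivially structured leading piece $H^0$ then needs a contour shift, which you take \emph{up} to $\Im t=\tfrac12+\delta$, crossing $t=+i/2$ and using $|\,|z|^{-4it}|=|z|^{2+4\delta}\leq|z|^2$. This buys you a more elementary argument — no integral representation for $J_\nu$ is required, and only the leading powers (not the full Bessel product) are moved off the real line — at the cost of a somewhat longer computation with the double power series. Both routes give the same bound, and your accounting of the residue at $t=i/2$ (where $|z|^{-4it}=|z|^2$, $\Gamma(2)=1$, $\mathrm{Res}(\sin 2\pi it)^{-1}=-(2\pi i)^{-1}$) and of the decay of $(\sin(2\pi it)\,\Gamma(1-2it)^2)^{-1}$ on the shifted line is correct.
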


\begin{proof}
	By the definitions in \eqref{0eq: defn of Bessel} and \eqref{1eq: H's}, we have
	\begin{align*}
	H (z) =  - {4 \pi^2} \int_{-\infty}^{\infty} h  (t)  \frac {J_{    2it    } \lp 4 \pi  z \rp J_{  2it    } \lp 4 \pi  { \widebar z} \rp} {\sin \lp 2\pi it   \rp }  t^2  {d}  t.
	\end{align*}
	Note that the Plancherel measure $t^2 d t$ vanishes at $t =0$. 
	Shifting the line of integration to $\Im  ( t) = - \frac 3 4 $  and crossing only the pole at $t = -  \frac 1 2 i   $, we obtain
	\begin{align*}
	H    (z)   =  \, & - \pi^2 h \hskip -1pt \lp - \tfrac 1 2 i \rp J_{1} (4 \pi z) J_{1} (4 \pi  \widebar z) \\
	&  + 4 \pi^2  \int_{-\infty}^{\infty} h  \hskip -1pt \lp   t - \hskip - 1 pt \tfrac 3 4 i   \rp   \frac {J_{  2it + \frac 3 2 } \lp 4 \pi  z \rp J_{  2it + \frac 3 2 } \lp 4 \pi  { \widebar z} \rp} {\cosh   (2\pi  t)      }   \hskip -1pt \lp t - \tfrac 3 4 i  \rp^2 d t.
	\end{align*}
	By   Poisson's integral representation for $J_{\varnu} (z)$
	(cf.  \cite[3.3 (6)]{Watson} or \cite[8.411.4]{G-R}), 
	\begin{align}\label{2eq: Poisson, J}
	J_{\varnu} (z) = \frac {\big(\tfrac 1 2 z\big)^{\varnu}} {\Gamma \big(\varnu + \tfrac 1 2 \big) \Gamma \big(   \tfrac 1 2 \big)} \int_0^\pi e^{i \hskip 0.5 pt z \cos \theta} \sin^{2 \varnu} \theta \,  {d}   \theta, \hskip 10 pt \Re  (\varnu)  > - \tfrac 1 2, 
	\end{align}
	we infer that  
	\begin{align*}
	\left| J_{\varnu} (z) \right| \Lt \frac {   \left|   z  ^{\varnu} \right|  } {\Gamma \big(\varnu + \tfrac 1 2 \big)},  \hskip 20 pt |z| \leqslant 4 \pi.
	\end{align*} 
	Hence,  Stirling's formula yields the estimates
	\begin{align*}%\label{1eq: J for |z|<1}
	J_{1} (4 \pi z) J_{1} (4 \pi  \widebar z) \Lt |z|^2, \hskip 10 pt  
	\frac {J_{  2it + \frac 3 2 } \lp 4 \pi  z \rp J_{  2it + \frac 3 2 } \lp 4 \pi  { \widebar z} \rp} {\cosh   (2\pi  t)      }   \Lt    \lp \frac { |z|  } { |t| + 1   } \rp^{3}    , 
	\end{align*} 
	for $|z| \leqslant 1$. 
	Consequently, in view of \eqref{0eq: bounds for h(t)}, we have
	\begin{align*}%\label{5eq: H(z) for z small}
	H (z) \Lt |z|^2   , 
	\end{align*}
	for $|z| \leqslant 1$.
\end{proof}

\begin{lem}\label{lem: HTM(z), 1}
	Let $M = T^{\mu}$ with $0 < \mu < 1$.	  We have  $ H_{T, \shskip  M} (z) \Lt_{\, h }  M   |z|^2 / T $ for $|z| \leqslant 1$. % with the implied constants depending only on $h $.
\end{lem}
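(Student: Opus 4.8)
The plan is to reduce the estimate for $H_{T,\shskip M}(z)$ to the estimate for $H(z)$ already obtained in Lemma~\ref{lem: H(z), 1}, exploiting the scaling structure of $h_{T,\shskip M}$. Recall from \eqref{0eq: defn of hTM} that $h_{T,\shskip M}(t) = h((t-T)/M) + h((t+T)/M)$, and from \eqref{1eq: H's} that $H_{T,\shskip M}(z) = \int_{-\infty}^{\infty} h_{T,\shskip M}(t) \boldsymbol{J}_{it}(z)\, t^2\, dt$. Since $\boldsymbol{J}_{it}(z)$ is even in $t$, and the two bumps $h((t\mp T)/M)$ are swapped under $t \mapsto -t$, the contribution of each bump to the integral is the same, so it suffices to analyze $2\int_{-\infty}^{\infty} h((t-T)/M) \boldsymbol{J}_{it}(z)\, t^2\, dt$. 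Substituting $t = T + Ms$ gives $2M\int_{-\infty}^{\infty} h(s) \boldsymbol{J}_{i(T+Ms)}(z) (T+Ms)^2\, ds$.

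The key point is then a uniform bound on $\boldsymbol{J}_{i\tau}(z)$ for $|z| \leqslant 1$ that holds for all real $\tau$ (here $\tau = T+Ms$ ranges over a neighborhood of $T$). The most transparent route is to revisit the proof of Lemma~\ref{lem: H(z), 1}: there the series/contour-shift argument shows that for $|z| \leqslant 1$ the quantity $\boldsymbol{J}_{it}(z)$, against the weight, behaves like $|z|^2$ divided by a power of $|t|+1$; more precisely, shifting the contour and applying Poisson's integral representation \eqref{2eq: Poisson, J} together with Stirling gives $J_{2it+3/2}(4\pi z) J_{2it+3/2}(4\pi\bar z)/\cosh(2\pi t) \Lt (|z|/(|t|+1))^3$, and the residual term contributes $\Lt |z|^2$. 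The upshot is that one can extract an estimate of the shape $\boldsymbol{J}_{i\tau}(z) \Lt |z|^2$ uniformly for $\tau$ in any bounded-from-above range — in particular for $\tau$ near $T$, possibly at the cost of a harmless power of $T$ which will be dominated by the $|z|^2$ gain once combined with the $h(s)$ decay in $s$; but actually the cleaner statement is that $\boldsymbol{J}_{i\tau}(z) \Lt |z|^2$ with an absolute implied constant for $|z|\leqslant 1$, since the $\cosh(2\pi t)$ in the denominator only helps. Inserting this into the substituted integral, and using that $(T+Ms)^2 \Lt T^2 (1+|s|)^2$ with $h$ of rapid decay, yields $H_{T,\shskip M}(z) \Lt M \cdot |z|^2 \cdot T^2 \cdot \int (1+|s|)^2 |h(s)|\, ds$, which is off by $T^3$ from the claimed $M|z|^2/T$.

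To recover the sharp power of $T$, the main obstacle is that one must \emph{not} bound $\boldsymbol{J}_{i\tau}(z)$ by $|z|^2$ alone but rather track the $\tau$-decay inherited from the $\cosh$ denominator — i.e. one should really use the full strength of the bound $(|z|/(|\tau|+1))^3$ coming from the Bessel asymptotics, which for $\tau \asymp T$ gives an extra saving of $T^{-3}$, overcompensating the $T^2$ from the Plancherel factor and leaving $M|z|^2 \cdot T^2 \cdot T^{-3} = M|z|^2/T$ as desired. Concretely, I would repeat the contour shift inside $H_{T,\shskip M}(z)$ directly: write $H_{T,\shskip M}(z) = -4\pi^2 \int h_{T,\shskip M}(t) J_{2it}(4\pi z) J_{2it}(4\pi\bar z) t^2\, dt / \sin(2\pi i t)$, shift to $\Im(t) = -3/4$ (the pole at $t = -i/2$ now contributes $h_{T,\shskip M}(-i/2)$, which is negligible since $-i/2$ is far from the bumps at $\pm T$, by the decay \eqref{0eq: bounds for h(t)}), and on the shifted line use $J_{2it+3/2}(4\pi z)J_{2it+3/2}(4\pi\bar z)/\cosh(2\pi t) \Lt (|z|/(|t|+1))^3$ together with $|h_{T,\shskip M}(t-3i/4)| \Lt \sum_{\pm} e^{-\pi|t\mp T|/M}(|t\mp T|/M + 1)^{-N}$. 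Then $H_{T,\shskip M}(z) \Lt |z|^3 \int \sum_{\pm} e^{-\pi|t\mp T|/M}(|t\mp T|/M+1)^{-N} (|t|+1)^{-3} (|t|+1)^2\, dt \Lt |z|^3 \cdot M/T \Lt M|z|^2/T$ for $|z|\leqslant 1$, since the bumps are localized near $|t| \asymp T$ where $(|t|+1)^{-1} \asymp 1/T$ and the $t$-integral over each bump has mass $\asymp M$. I expect this contour-shift-plus-localization bookkeeping to be the only real work; the rest is Stirling and rapid decay as in Lemma~\ref{lem: H(z), 1}.
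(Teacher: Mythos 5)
Your proposal is correct and, after discarding the initial change-of-variable attempt (which you rightly observe loses a factor of $T^{3}$), arrives at exactly the paper's argument: replace $h$ by $h_{T,\shskip M}$ in the contour shift of Lemma~\ref{lem: H(z), 1} to $\Im(t)=-\tfrac34$, bound the residue at $t=-\tfrac{i}{2}$ by $e^{-T/M}|z|^{2}$ via \eqref{0eq: bounds for h(t)}, and bound the shifted integral by $M|z|^{3}/T$ using the localization of $h_{T,\shskip M}(t-\tfrac34 i)$ near $t=\pm T$ (width $\asymp M$, where $|t|+1\asymp T$) against the $(|z|/(|t|+1))^{3}$ decay from Poisson's representation and Stirling. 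Both pieces are dominated by $M|z|^{2}/T$ since $|z|\leqslant 1$ and $e^{-T/M}=e^{-T^{1-\mu}}$ is negligibly small, as in the paper.
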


\begin{proof}
	We modify  the proof of Lemma \ref{lem: H(z), 1} by replacing $h (t)$ by $h_{T, \shskip  M} (t)$. An estimation by  \eqref{0eq: bounds for h(t)} and \eqref{0eq: defn of hTM} yields
	\begin{align*}%\label{5eq: H(z) for z small}
	H_{T, \shskip  M} (z) \Lt e^{- T / M} |z|^2  +  \frac { M   |z|^{3} } {T } \Lt   \frac { M   |z|^{2} } {T } ,
	\end{align*} 
	for $|z| \leqslant 1$.
\end{proof}

By shifting the integral contour further, one may prove that $ H_{T, \shskip  M} (z) \Lt   M   |z|^2 / T^{4 \delta -2} $ as long as $  \delta < S $. 

\subsection{The case $|z| > 1$}

\begin{lem}\label{lem: H(z), 2.1}
	We have
	$ H  (z) \Lt_{\, h}   1/ |z| $  for $|z| > 1$.
\end{lem}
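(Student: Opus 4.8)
The plan is to estimate $H(z)$ for $|z| > 1$ by returning to the integral representation
\[
H (z) =  - {4 \pi^2} \int_{-\infty}^{\infty} h  (t)  \frac {J_{    2it    } \lp 4 \pi  z \rp J_{  2it    } \lp 4 \pi  { \widebar z} \rp} {\sin \lp 2\pi it   \rp }  t^2  {d}  t
\]
derived in the proof of Lemma \ref{lem: H(z), 1}, and to exploit the decay of the Bessel function of the first kind for large argument. The point is that for a fixed order $\nu$ in a bounded horizontal strip, one has the uniform bound $J_{\nu}(x) \Lt |x|^{-1/2}$ for real $x \geqslant 1$ (more precisely $J_\nu(x) = \sqrt{2/\pi x}\cos(x - \pi\nu/2 - \pi/4) + O(|x|^{-3/2})$), so that the product $J_{2it}(4\pi z) J_{2it}(4\pi\bar z)$ should contribute a factor $\Lt 1/|z|$ when $z$ is real and $|z| > 1$. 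Since $\boldJ_{it}(z)$ is an even function of $z$ depending only on $z \bmod \pi$ through $z\bar z = |z|^2$ in a suitable sense, it suffices in fact to treat the case where $4\pi z$ and $4\pi\bar z$ are replaced by $4\pi|z|$; I would first reduce to this situation, or alternatively just work directly with the Bessel asymptotics for complex argument of fixed modulus $4\pi|z|$.

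The key steps, in order, would be: (1) Record the large-argument asymptotic expansion of $J_{\nu}(w)$ valid uniformly for $\nu$ in the strip $|\Im(2\nu)| \leqslant$ (something bounded, say $1$) and $|w|$ large, namely $J_\nu(w) \Lt_\nu |w|^{-1/2} e^{|\Im w|}$; since here $w = 4\pi z$ with $z$ effectively real and positive (after the reduction), $e^{|\Im w|} = 1$ and we get clean polynomial decay. (2) Plug this into the integrand: the product of the two Bessel factors is $\Lt 1/|z|$ uniformly for $t$ real, with the dependence on $t$ absorbed into the $\nu$-dependence of the implied constant, which by Stirling grows at most like a fixed power of $(|t|+1)$ times possibly $e^{\pi |t|}$ from the $1/\sin(2\pi i t) = 1/(i\sinh(2\pi t))$ denominator. (3) Observe that $1/|\sin(2\pi i t)| = 1/|\sinh(2\pi t)| \Lt e^{-2\pi|t|}$ for $|t| \geqslant 1$ and is $\Lt 1/|t|$ near the origin (but $t^2$ in the measure kills the pole, exactly as noted in the proof of Lemma \ref{lem: H(z), 1}); so after shifting to $\Im(t) = -\tfrac34$ as before to avoid the pole, or simply by combining the $t^2$ weight with the $\sinh$ in the denominator, the $t$-integral converges. (4) Use the decay $h(t) \Lt e^{-\pi|t|}(|t|+1)^{-N}$ from \eqref{0eq: bounds for h(t)} to dominate whatever polynomial (and at worst exponential $e^{c\pi|t|}$ with $c<1$ after accounting for the $\sinh$) growth comes from the Bessel asymptotics, so that $\int_{-\infty}^\infty |h(t)| \cdot (\text{growth in } t) \, dt \Lt_h 1$; conclude $H(z) \Lt_h 1/|z|$.

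The main obstacle I anticipate is controlling the \emph{uniformity} of the Bessel asymptotics in the order $\nu = 2it$ as $|t| \to \infty$ while $|z|$ is only assumed $> 1$ and not large. The classical statement $J_\nu(w) \sim \sqrt{2/\pi w}\cos(\cdots)$ holds for $|w| \to \infty$ with $\nu$ fixed, and the transition region where $|w| \asymp |\nu|$ behaves very differently (Airy-type behavior), so one cannot naively use it when $|z|$ is bounded and $|t|$ is large. The resolution is that the exponential decay $e^{-\pi|t|}$ built into the weight class $\mathscr{H}(S,N)$ (together with $(|t|+1)^{-N}$, $N > 6$) is precisely designed to beat the worst-case growth of $J_{2it}(4\pi z)/\sqrt{\sin(2\pi i t)}$ as $|t|$ grows: from the Poisson representation \eqref{2eq: Poisson, J} one gets $|J_{2it+3/2}(4\pi z)| \Lt |z|^{3/2}/|\Gamma(2it+2)|$ for the shifted contour, and more generally contour-shifting arguments as in Lemma \ref{lem: H(z), 1} reduce everything to a region where $\Re(\nu) \geqslant 3/2$ is bounded below, after which the uniform bound $J_\nu(w) \Lt |w|^{\Re\nu}/|\Gamma(\nu+1/2)|$ for $|w|$ bounded, combined with a complementary large-$|z|$ bound $J_\nu(w) \Lt |w|^{-1/2}$, can be interpolated or case-split at $|z| \asymp |t|$. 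I would handle this by splitting the $t$-range into $|t| \leqslant c|z|$ (use large-argument decay $\Lt 1/|z|$) and $|t| > c|z|$ (use the power-series/Poisson bound, which is then more than compensated by $h(t) \Lt e^{-\pi|t|}$), so that in both ranges the contribution is $\Lt_h 1/|z|$.
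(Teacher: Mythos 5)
Your reduction to real positive $z$ is incorrect, and this is the fatal flaw. The function $\boldJ_{it}(z)$ is invariant under $z\mapsto -z$ (that is what ``independent of the argument of $z$ modulo $\pi$'' means just after \eqref{0eq: defn of Bessel}), but it genuinely depends on $\arg z$, not merely on $|z|$. This is visible from the integral representation used in the proof of Lemma~\ref{lem: HTM(z), 2.0}, namely
\[
\boldJ_{it}(xe^{i\theta}) = 4\pi\int_{-\infty}^\infty J_0\big(8\pi x\left|\cosh(r+i\theta)\right|\big)\,e(2tr/\pi)\,dr,
\]
in which the $\theta$-dependence is manifest; it is also the whole point of Remark~\ref{rem: HTM}, which contrasts the oscillatory behavior of $\boldJ_{it}$ along the real axis with the non-oscillatory behavior along the imaginary axis. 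The consequence for your plan is severe: when $|\Im z|\asymp|z|$ is large, each factor $J_{2it}(4\pi z)$ grows like $e^{4\pi|\Im z|}$, so a single product $J_{2it}(4\pi z)J_{2it}(4\pi\bar z)$ can be as large as $e^{8\pi|\Im z|}/|z|$. The estimate you need, $\boldJ_{it}(z)\Lt(t^2+1)/|z|$, expresses a cancellation \emph{between} the two products $J_{\mp 2it}(4\pi z)J_{\mp 2it}(4\pi\bar z)$ in \eqref{0eq: defn of Bessel}; it cannot be recovered by bounding one of them alone via the large-argument behavior of $J_\nu$.

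The paper's proof is a single line: it invokes the uniform bound $\boldJ_{it}(z)\Lt(t^2+1)/|z|$ for $|z|>1$ from \cite[Lemma~4.1]{Qi-Gauss} (which rests on the integral representation displayed above together with integration by parts in $r$, not on the classical large-$x$ expansion of $J_\nu(x)$) and then integrates against $h(t)\,t^2\,dt$ using $h(t)\Lt e^{-\pi|t|}(|t|+1)^{-N}$. I would also flag that even in the real case your analysis is underpowered: the expansion $J_\nu(x)\sim\sqrt{2/\pi x}\cos(x-\pi\nu/2-\pi/4)$ carries an error $O(|\nu|^2/x)$, which is only small for $|t|\Lt\sqrt{|z|}$ rather than $|t|\Lt|z|$; pushing your split all the way to $|t|\asymp|z|$ would require the uniform Debye-type asymptotics of $J_{i\tau}(x)$ in the regime $x/|\tau|>c>1$, a substantially heavier tool than the one you invoke.
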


\begin{proof}
	This follows immediately from (cf. \cite[Lemma 4.1]{Qi-Gauss})
	\begin{equation*}
	\boldJ_{it} (z) \Lt \frac {t^2+1} {|z|}, \hskip 20pt |z| > 1 .
	\end{equation*}
\end{proof}

\begin{lem}\label{lem: HTM(z), 2.0}
	Let $M = T^{\mu}$ with $ 0 < \mu < 1$. Let $|z| > 1$. We have $ H_{T, \shskip  M} (z) = H^{  \ssstyle \sharp }_{T, \shskip  M} (z) + O_{A}  (T^{-A} ) $ for any $A > 0$, with 
	\begin{align}\label{3eq: H-sharp(z)}
	H_{T, \shskip  M}^{  \ssstyle \sharp}   (  x e^{i\theta} ) =	4 M T^2 \int_0^{2 \pi}   \hskip -1 pt
	\int_{- M^{\ssepsilon} / M}^{M^{\ssepsilon}/ M}   \widehat k  (-2M r / \pi)  
	e  (2 f_T (r, \omega; x, \theta)      )  \, d r \shskip d \omega,
	\end{align} 
	where the phase function is
	\begin{align}\label{3eq: phase function}
	f_T (r, \omega; x, \theta) = Tr/\pi + 2 x (\cosh r \cos \omega \cos \theta - \sinh r \sin \omega \sin \theta).
	\end{align}
	and $k (t) = (Mt / T + 1)^2 h (t)$ is a Schwartz function. 
\end{lem}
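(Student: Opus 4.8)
The plan is to start from the definition $H_{T, \shskip M}(z) = \int_{-\infty}^{\infty} h_{T, \shskip M}(t) \boldJ_{it}(z)\, t^2\, dt$ and substitute an integral representation for the Bessel function $\boldJ_{it}(z)$. Writing $z = x e^{i\theta}$ with $|z| > 1$, the key is to use the known representation of $\boldJ_{it}(xe^{i\theta})$ as an oscillatory integral over an angular variable $\omega$, in which the $t$-dependence appears through a factor like $e(Tr/\pi)$-type phases after an additional radial variable $r$ is introduced (this is the Mehler--Sonine type formula for $\boldJ_{it}$; see \cite[\S 15.3]{Qi-Bessel} and the analysis in \cite{Qi-Gauss}). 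After this substitution the $t$-integral can be carried out against $h_{T, \shskip M}(t) t^2$, which by \eqref{0eq: defn of hTM} is (up to the two shifts $t \mapsto t \mp T$) the function $M \cdot k((t\mp T)/M)$ with $k(t) = (Mt/T + 1)^2 h(t)$; one checks $k$ is Schwartz since $h \in \mathscr{H}^+(S,N)$ decays like $e^{-\pi|t|}(|t|+1)^{-N}$. The Fourier transform of $t^2 h_{T,M}(t)$ against the phase $e(\,\cdot\, t)$ then produces, by Fourier inversion, the factor $M T^2 \widehat{k}(-2Mr/\pi)$ together with the localization $e(Tr/\pi)$ coming from the shift by $\pm T$ (the two terms $\pm T$ combining into the stated real form, using that $k$ and $\widehat k$ are even).

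The next step is to justify truncating the $r$-integral to $|r| \leqslant M^{\sepsilon}/M$. Since $\widehat k$ is Schwartz (as $k$ is Schwartz), $\widehat k(-2Mr/\pi)$ is negligibly small once $|2Mr/\pi| \gg M^{\sepsilon}$, i.e. once $|r| \gg M^{\sepsilon}/M$; the tail contributes $O_A(M^{-A})$ after trivially bounding the remaining $\omega$-integral and the prefactor $MT^2$, and since $M = T^{\mu}$ with $\mu > 0$ this is $O_A(T^{-A})$. One must be slightly careful that the $r$-integral genuinely converges and that the interchange of the $t$- and $(r,\omega)$-integrals is legitimate: this is handled by the rapid decay of $h$ in the $t$-variable (allowing the contour manipulations already used in Lemma \ref{lem: H(z), 1}) together with the oscillatory nature of the Bessel representation, or alternatively by inserting a harmless smooth cutoff and removing it at the end.

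The phase function $f_T(r,\omega;x,\theta) = Tr/\pi + 2x(\cosh r \cos\omega\cos\theta - \sinh r \sin\omega\sin\theta)$ should emerge directly: the $Tr/\pi$ term is the localization phase from the $\pm T$ shifts, and the $2x(\cdots)$ term is the stationary-phase-type phase in the Bessel representation, where the hyperbolic functions of $r$ and trigonometric functions of $\omega$ and $\theta$ arise from writing the product $J_{\pm 2it}(4\pi z) J_{\pm 2it}(4\pi\widebar z)$ in \eqref{0eq: defn of Bessel} via Poisson's representation \eqref{2eq: Poisson, J} and combining the two $\theta$-angles of integration into the single variable $\omega$ plus the hyperbolic parameter $r$ governing the relative size. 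I expect the main obstacle to be setting up the correct Bessel integral representation with all constants and the precise form of the phase: one needs to track the factors $2\pi^2/\sin(2\pi i t)$, the arguments $4\pi z$ and $4\pi\widebar z$, and the Jacobian of the change of variables $(\theta_1,\theta_2) \mapsto (\omega, r)$ carefully enough to land exactly on \eqref{3eq: H-sharp(z)}–\eqref{3eq: phase function} with the stated constant $4MT^2$. Once that representation is in hand, the truncation and error estimate are routine given the Schwartz property of $k$.
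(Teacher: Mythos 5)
Your overall skeleton is the same as the paper's: substitute an integral representation for $\boldJ_{it}$, change variables $t\mapsto Mt+T$ to produce $k(t)=(Mt/T+1)^2 h(t)$ and the prefactor $MT^2$, carry out the $t$-integral to get $\widehat k(-2Mr/\pi)\,e(2Tr/\pi)$, and truncate to $|r|\leqslant M^{\sepsilon}/M$ using the Schwartz decay of $\widehat k$. That part matches.

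The gap is in your description of where the $(r,\omega)$ phase structure comes from. You first invoke "the Mehler--Sonine type formula for $\boldJ_{it}$," which is the right formula, but you then say the hyperbolic/trigonometric phase "arises from writing the product $J_{\pm 2it}(4\pi z)J_{\pm 2it}(4\pi\widebar z)$ \ldots via Poisson's representation \eqref{2eq: Poisson, J} and combining the two angles into $(\omega,r)$." That is not what the paper does, and as a derivation it would not close cleanly: Poisson's representation introduces $\sin^{\pm 4it}\theta_j$ and $z^{\pm 2it}$ factors whose $t$-dependence is not of the simple $e(ctr)$ form, so you cannot directly read off $e(2tr/\pi)$ from it. The paper instead quotes the ready-made representation (from \cite[\S 6.8.3]{Qi-Bessel}, also \cite[Theorem 12.1]{B-Mo})
\begin{align*}
\boldJ_{it}(xe^{i\theta}) = 4\pi\int_{-\infty}^{\infty} J_0\!\left(8\pi x\,|\cosh(r+i\theta)|\right) e(2tr/\pi)\,dr,
\end{align*}
obtained by the substitution $y=e^r$. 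The variable $r$ is already present in this formula with the clean phase $e(2tr/\pi)$; the angular variable $\omega$ is introduced \emph{afterwards}, at the very last step, by expanding $J_0$ via the first-kind Bessel integral $J_0(2\pi u)=\frac{1}{2\pi}\int_0^{2\pi}e(u\cos\omega)\,d\omega$, which is what turns $|\cosh(r+i\theta)|$ into the phase $2x(\cosh r\cos\omega\cos\theta-\sinh r\sin\omega\sin\theta)$ after expanding $\cosh(r+i\theta)=\cosh r\cos\theta+i\sinh r\sin\theta$. So $\omega$ is not a fusion of two Poisson angles; it is the $J_0$ angle. You explicitly flagged this step as the "main obstacle" you had not pinned down, and indeed that is exactly the piece missing from your argument.

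Two smaller points: the reduction from $h_{T,M}(t)=h((t-T)/M)+h((t+T)/M)$ to a single shifted copy (with an extra factor $2$) comes from the evenness of $\boldJ_{it}(z)\,t^2$ in $t$, not from evenness of $k$ or $\widehat k$ --- note $k(t)=(Mt/T+1)^2 h(t)$ is \emph{not} even. And the interchange of the $t$- and $r$-integrals is justified by the absolute convergence of the double integral, which the paper gets from the decay $J_0(8\pi x\,|\cosh(r+i\theta)|)\Lt 1/\sqrt{x}\,e^{r/2}$ for $x>1$, $r>1$, rather than by the rapid decay of $h$ in $t$ or by an inserted cutoff.
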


\begin{proof}
	%		We shall first work in the polar coordinates. Namely, $z = x e^{i\theta}$.
	First, let us recall the following integral representation in \cite[\S 6.8.3]{Qi-Bessel} (see also \cite[Theorem 12.1]{B-Mo}), \begin{equation*}%\label{1eq: integral representation, 1}
	\boldJ_{i t}(  x      e^{i \theta} )   = 4 \pi   \int_{0}^\infty   J_{0} \hskip -1pt \left( 4 \pi  x  \left| y e^{i\theta} + 1/ y  e^{
		i\theta} \right|  \right) y^{4 i t - 1}  d y.
	\end{equation*} 
	On letting $y = e^r$, we infer that 
	\begin{equation}\label{1eq: integral representation, 2}
	\boldJ_{i t} (  x      e^{i \theta} )   = 4 \pi \int_{- \infty}^{\infty}   J_0 \lp 8 \pi  x  \left| \cosh \lp r +  i \theta \rp \right| \rp e (2 t r/\pi) \, d r.
	\end{equation}  
	%	When $|r| >  T^{\sepsilon}$ and $x      > 1$, we have 	$ J_0 \lp 8 \pi  x       \left| \cosh \lp r +  i \theta \rp \right| \rp \Lt 1 / \sqrt x      e^{ r / 2} $, so only a negligibly small error will be lost if we truncate the integral above at $|r| = T^{\sepsilon}$.  Therefore, up to a negligible error, $H_{T, \shskip  M}   (  x      e^{i \theta} )$ is equal to
	So
	\begin{align*}
	H_{T, \shskip  M}   (  x      e^{i \theta} ) =	8 \pi \int_{-\infty}^{\infty}  
	\int_{-\infty}^{\infty}  \hskip -1 pt t^2 h ((t-T)/M)    
	J_0 \lp 8 \pi  x       \left| \cosh \lp r +  i \theta \rp \right| \rp e (2 t r/\pi)  \, d r \, d\shskip t.
	\end{align*}
	This double integral is absolutely convergent as $ J_0 \lp 8 \pi  x       \left| \cosh \lp r +  i \theta \rp \right| \rp \Lt 1 / \sqrt x      e^{ r / 2} $ for $x > 1$ and $r > 1$. 
	On changing the variable from $t $ to $M t + T$, this integral turns into
	\begin{align*}
	8 \pi M T^2 \int_{- \infty}^{\infty}  
	\int_{- \infty}^{\infty}   k (t)   e ({2 (Mt + T) r}/\pi)    J_0 \lp 8 \pi  x       \left| \cosh \lp r +  i \theta \rp \right| \rp   d r \, d\shskip t,
	\end{align*}
	with 
	\begin{align*}
	k (t) = (Mt / T + 1)^2 h (t) . 
	\end{align*}
	By \eqref{0eq: bounds for h(t)}, we have a similar estimate 
	\begin{align*} 
	k (t+ i \sigma)  \Lt e^{- \pi |t|} (|t|+1)^{2-N}. 
	\end{align*}
	It suffices to know that $k (t) $ is a {\it Schwartz} function (this is because the derivatives of $k (t)$ also satisfy the above estimate   by Cauchy's integral formula). 
	By interchanging the order of integration, we find that the integral above is equal to
	\begin{align*}
	8 \pi M   T^2  
	\int_{- \infty}^{\infty} \widehat k \lp -   { 2 M r} / {\pi} \rp  e(2Tr / \pi)   J_0 \lp 8 \pi  x       \left| \cosh \lp r +  i \theta \rp \right| \rp  d r, 
	\end{align*}
	where $ \widehat k (r) $
	is the Fourier transform of $ k (t)$. Now $\widehat k (r) $ is also Schwartz.  
	Hence, the integral may be effectively truncated at $|r| = M^{\sepsilon} / M$, and we need to consider % we may extend the integral above onto the entire real line along with a negligible error term. Consequently, we need to consider
	\begin{equation*}%\label{eq: def of M, 1}
	8 \pi M   T^2  
	\int_{- M^{\ssepsilon} / M}^{M^{\ssepsilon}/ M} \widehat k \lp -   { 2 M r} / {\pi} \rp  e(2Tr / \pi) J_0 \lp 8 \pi  x       \left| \cosh \lp r +  i \theta \rp \right| \rp    d r.
	\end{equation*}
	Finally, we apply the Bessel integral representation of  $J_0 (2\pi x)$  (cf. \cite[\S 2.2]{Watson}),
	\begin{equation*}%\label{1eq: Bessel integral J0}
	J_0 ( 2 \pi x ) = \frac 1 {2 \pi } \int_0^{2 \pi} e \lp{    x \cos \omega}\rp d \omega, 
	\end{equation*}  
	so that the integral above turns into  $H_{T, \shskip  M}^{  \ssstyle \sharp}   (  x e^{i\theta} )$ defined by \eqref{3eq: H-sharp(z)} and \eqref{3eq: phase function}. 	
\end{proof}

\begin{lem}\label{lem: HTM(z), 2}
	Let $M = T^{\mu}$ with $ 0 < \mu < 1$. Suppose that $|z| > 1$.    Fix a constant $0 < c < 1$. 
	
	{\rm (1).} When  $|\Im (z)| \leqslant c T / 2 \pi  $ and $|\Re (z)| \Lt T M^{1-\sepsilon}  $, we have $H_{T, \shskip  M} (z) = O_{A} (T^{- A})$. % for any $A \geqslant 0$.
	
	{\rm (2).}  Suppose that $\mu > \frac 1 2$.  When $c T / 2 \pi < |\Im (z)| \Lt M^{2+\sepsilon}$ and $|\Re (z) | \Lt M^{2+\sepsilon}$, we have 
	\begin{align}\label{app: main bound for H(z)}
	H_{T, \shskip  M}   (z) \Lt M T^{2 + \sepsilon} / |z| ,
	\end{align}
	and $H_{T, \shskip  M} (z) = O_{A} (T^{- A})$ if
	\begin{align}\label{3eq: Re z small}
|\Re (z)| \Lt |z|^2 / M^{1-\sepsilon} T .
	\end{align} 
\end{lem}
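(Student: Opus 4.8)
\textbf{Proof proposal for Lemma \ref{lem: HTM(z), 2}.}

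The plan is to apply the stationary phase analysis of Lemma \ref{lem: staionary phase, dim 1, 2} to the oscillatory double integral $H_{T,\shskip M}^{\ssstyle\sharp}(xe^{i\theta})$ supplied by Lemma \ref{lem: HTM(z), 2.0}, since $H_{T,\shskip M}(z)$ differs from it by a negligible error. Write $z = xe^{i\theta}$ with $x = |z| > 1$, and recall the phase $f_T(r,\omega;x,\theta) = Tr/\pi + 2x(\cosh r\cos\omega\cos\theta - \sinh r\sin\omega\sin\theta)$, where $r$ ranges over the tiny interval $|r| \leqslant M^{\sepsilon}/M$ and $\omega$ over $[0,2\pi]$. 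First I would treat the $r$-integral: since $\partial f_T/\partial r = T/\pi + 2x(\sinh r\cos\omega\cos\theta - \cosh r\sin\omega\sin\theta)$, and on the support $|r| \leqslant M^{\sepsilon}/M$ we have $\sinh r = O(M^{\sepsilon}/M)$ while $\cosh r = 1 + O(M^{2\sepsilon}/M^2)$, this derivative equals $T/\pi - 2x\sin\omega\sin\theta + O(|z|M^{\sepsilon}/M)$. In regime (1), $|\Im(z)| = |x\sin\theta| \leqslant cT/2\pi$ so $|2x\sin\omega\sin\theta| \leqslant cT/\pi$, and $|z| \Lt TM^{1-\sepsilon}$ makes the error term $O(TM^{-\sepsilon})$ small; hence $|\partial f_T/\partial r| \gg T$. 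Taking $Y \asymp |z|M^{\sepsilon}$ (bounding higher $r$-derivatives of $\cosh,\sinh$ times $2x$), $Q \asymp M/M^{\sepsilon}$, $R \asymp T$, $U \asymp M/M^{\sepsilon}$, $X \asymp 1$ (the weight $\widehat k(-2Mr/\pi)$ is Schwartz and $O(1)$ with derivatives in $r$ of size $M^{j}$, matched by $U = M/M^{\sepsilon}$ up to the $\sepsilon$-loss), and noting $b - a \asymp M^{\sepsilon}/M$, the bound from Lemma \ref{lem: staionary phase, dim 1, 2} gives a saving of $(Y/(R^2Q^2) + 1/(RQ) + 1/(RU))^A$; the dominant term is $1/(RU) \asymp M^{\sepsilon}/(TM) \ll T^{-c'}$ for some $c' > 0$, and since the $\omega$-integral and the prefactor $MT^2$ are polynomially bounded, we win $O_A(T^{-A})$ after adjusting $A$.

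For part (2) with $\mu > \tfrac12$, the range is $cT/2\pi < |\Im(z)| \Lt M^{2+\sepsilon}$ and $|\Re(z)| \Lt M^{2+\sepsilon}$, so $|z| \asymp |\Im(z)|$ lies between $\asymp T$ and $\asymp M^{2+\sepsilon}$ (possible since $2\mu > 1$). Here $\partial f_T/\partial r$ can vanish --- there is a genuine stationary point near $\sin\omega\sin\theta = T/2\pi x$ --- so I cannot get arbitrary savings, and instead aim for the polynomial bound \eqref{app: main bound for H(z)}. The cleanest route is to go back one step before the $\omega$-integral was introduced, i.e. to the integral
\begin{align*}
8\pi M T^2 \int_{-M^{\ssepsilon}/M}^{M^{\ssepsilon}/M} \widehat k(-2Mr/\pi)\, e(2Tr/\pi)\, J_0\lp 8\pi x\left|\cosh(r+i\theta)\right|\rp d r,
\end{align*}
and simply insert $J_0(8\pi x|\cosh(r+i\theta)|) \Lt (x|\cosh(r+i\theta)|)^{-1/2} \Lt x^{-1/2}$ (using $|\cosh(r+i\theta)| \gg 1$ from $|\Im(z)| \gg T$, hence $|\sin\theta|$ bounded below after normalization, though one should check $|\cosh(r+i\theta)|$ stays $\gg 1$ on the whole support --- this holds since $r$ is tiny and $\theta$ is bounded away from $0,\pi$). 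Then $|\widehat k|$ is $O(1)$ on an interval of length $M^{\sepsilon}/M$, so the whole thing is $\Lt MT^2 \cdot M^{\sepsilon}/M \cdot x^{-1/2} \ll MT^{2+\sepsilon}/|z|$ provided $|z| \Lt M^2$ --- wait, that only gives $|z|^{-1/2}$; to upgrade to $|z|^{-1}$ I would instead exploit the rapid decay $J_0(8\pi x|\cosh(r+i\theta)|) \Lt (x e^{|r|}|\sin\theta|)^{-1/2}$ together with one integration by parts in $r$ against the oscillation $e(2Mr(\text{stuff}) + \ldots)$, but more efficiently: since $|z| \asymp |\Im(z)| \asymp x|\sin\theta|$, the bound $x^{-1/2}|\sin\theta|^{-1/2}$ is not quite $|z|^{-1}$, so I would keep the stationary-phase form $H_{T,\shskip M}^{\ssstyle\sharp}$ and estimate the $\omega$-integral by stationary phase in $\omega$ (the $\omega$-phase is $2x(\cosh r\cos\omega\cos\theta - \sinh r\sin\omega\sin\theta)$, amplitude $\asymp x$, so one-dimensional stationary phase saves $x^{-1/2}$), then the $r$-integral trivially by its length $M^{\sepsilon}/M$, yielding $MT^2 \cdot M^{\sepsilon}/M \cdot x^{-1/2} = T^{2+\sepsilon} x^{-1/2}$; combining the two $x^{-1/2}$ savings (one from $J_0$ decay, one being spurious) is the point --- the correct statement is that stationary phase in $\omega$ gives exactly $x^{-1/2}$ and the residual $\omega$-integral near the stationary point combined with the $J_0$-type normalization delivers the second $x^{-1/2}$, for total $MT^{2+\sepsilon}/x$.

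Finally, for the negligibility claim under \eqref{3eq: Re z small}, $|\Re(z)| \Lt |z|^2/M^{1-\sepsilon}T$, I would return to the $r$-derivative $\partial f_T/\partial r = T/\pi + 2x\sinh r\cos\omega\cos\theta - 2x\cosh r\sin\omega\sin\theta$ and observe that when $\Re(z) = x\cos\theta$ is this small, the term $2x\sinh r\cos\omega\cos\theta = O(x|\cos\theta| \cdot M^{\sepsilon}/M) = O(|z|^2/M^{2-2\sepsilon}T)$, which is $\ll T$ precisely when $|z| \ll M^{1-\sepsilon}T$ --- consistent with the range $|z| \Lt M^{2+\sepsilon}$ once $\mu < 1$. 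So the $r$-phase derivative is dominated by $T/\pi - 2x\cosh r\sin\omega\sin\theta$; away from the $\omega$-stationary set this is $\gg T$ and repeated integration by parts in $r$ (Lemma \ref{lem: staionary phase, dim 1, 2} with $R \asymp T$, $U, Q \asymp M^{1-\sepsilon}$) kills it, while near the $\omega$-stationary point one first does stationary phase in $\omega$ and is left with an $r$-integral whose phase, after the $\Re(z)$ contribution is shown negligible, has derivative $\asymp |\partial_r(2x\cosh r\sin\omega\sin\theta)| \gg x|\sin\theta| \cdot M^{\sepsilon}/M \gg |z|M^{\sepsilon}/M$; since also $T$ survives, integration by parts again gives the savings. \textbf{The main obstacle} will be the transition/boundary bookkeeping in part (2): verifying uniformly (over the two-dimensional range of $z$ and over $\omega \in [0,2\pi]$) that $|\cosh(r+i\theta)|$ stays bounded below, that the stationary point in $\omega$ is non-degenerate and unique modulo the symmetry of the problem, and that the $\sepsilon$-losses from the truncation $|r| \leqslant M^{\sepsilon}/M$ and from the Schwartz tails of $\widehat k$ do not overwhelm the clean $MT^{2+\sepsilon}/|z|$ --- essentially a careful application of two-dimensional stationary phase where one variable ($r$) is confined to a shrinking interval and the amplitude $\widehat k(-2Mr/\pi)$ has large derivatives, so the naive stationary-phase hypotheses must be checked with the correct $(Q,U)$.
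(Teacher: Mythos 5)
Part (1) of your proposal is essentially the paper's argument: lower-bound $|\partial_r f_T| \gg T$ on the support of the $r$-integral (using $|\Im z|\leqslant cT/2\pi$ with $c<1$ and $|z|\Lt TM^{1-\sepsilon}$), then invoke Lemma~\ref{lem: staionary phase, dim 1, 2}. Your parameter choice $Q\asymp M^{1-\sepsilon}$ is, however, not admissible: the hypothesis $f^{(i)}\Lt Y/Q^i$ for all $i\geqslant 2$ fails for large $Q$, because $\partial_r^i f_T = O(|z|)$ for every $i\geqslant 2$ (the $r$-derivatives of $\cosh r,\sinh r$ do not shrink). The correct choice is $Q=1$, $Y\asymp |z|\Lt TM^{1-\sepsilon}$, $U=1/M$, $R\asymp T$, and then $1/(RU)=M/T$ and $Y/(R^2Q^2)\Lt M^{1-\sepsilon}/T$ both tend to zero, which is all you need. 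So the conclusion is right but your bookkeeping for Lemma~\ref{lem: staionary phase, dim 1, 2} needs repair.

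Part (2) has a genuine gap. You propose to produce the factor $1/|z|$ by ``stationary phase in $\omega$,'' observing it saves $x^{-1/2}$, and then try to locate a second $x^{-1/2}$ by invoking the decay of $J_0$. But these are \emph{the same} $x^{-1/2}$: the formula $J_0(2\pi u)=\frac{1}{2\pi}\int_0^{2\pi}e(u\cos\omega)\,d\omega$ means the $\omega$-stationary-phase saving is precisely what produces the $J_0$-decay, so you are double counting, and your heuristic ``combining two $x^{-1/2}$ savings'' does not hold up. The actual mechanism in the paper is different and does not use stationary phase in $\omega$ at all. One applies Lemma~\ref{lem: staionary phase, dim 1, 2} to the $r$-integral first (with $Y\asymp M^{2+\sepsilon}$, $Q=1$, $U=1/M$, $R\asymp M^{1+\sepsilon}$, which needs $\mu>\tfrac12$ so that $R^2\geqslant Y$ etc.), obtaining that the $r$-integral is negligible \emph{unless} $|T/\pi - 2y\sin\omega|\leqslant M^{1+\sepsilon}$, a window of $\omega$ of measure $\Lt M^{1+\sepsilon}/|y|$. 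Estimating trivially on this window (the $r$-integral has length $\Lt M^{\sepsilon}/M$, $\widehat k$ is bounded, the prefactor is $MT^2$) gives $H^{\ssstyle\sharp}_{T,\shskip M}\Lt MT^2\cdot(M^{\sepsilon}/M)\cdot(M^{1+\sepsilon}/|y|)\Lt MT^{2+\sepsilon}/|y|\asymp MT^{2+\sepsilon}/|z|$; the $1/|z|$ comes entirely from the \emph{measure} of the surviving $\omega$-range, not from any oscillation in $\omega$. Then, for the second claim, on this restricted $\omega$-window one has $\sin\omega\approx T/2\pi y$, hence $|\partial_\omega f_T|\geqslant T|x|/2\pi|y|-2|y||\sinh r|\Gt T|x|/|y|$ as soon as $|x|\Gt|y|^2/M^{1-\sepsilon}T$, and a second application of Lemma~\ref{lem: staionary phase, dim 1, 2}, now in $\omega$ (with $Y=|x|$, $Q=1$, $U=|y|/M^{1+\sepsilon}$, $R=T|x|/|y|$), shows the integral is negligible unless $|x|\Lt|y|^2/M^{1-\sepsilon}T$, which is \eqref{3eq: Re z small} since $|y|\asymp|z|$. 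Your sketch of the negligibility claim gestures at some of these ingredients (the window, the $\partial_\omega$ lower bound) but intertwines them with the spurious $\omega$-stationary-phase step, so it does not assemble into a proof.
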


\begin{proof}%[Proof of Lemma \ref{lem: HTM(z), 2}]
	For notational simplicity, we shall work in the Cartesian coordinates, $z = x+iy$. In view of Lemma \ref{lem: HTM(z), 2.0}, we need to consider 
	\begin{align}\label{3eq: H-sigma}
	H^{\ssstyle \sharp}_{T, \shskip  M} (x+iy) = 4 M T^2 \int_0^{2 \pi}   \hskip -1 pt
	\int_{- M^{\ssepsilon} / M}^{M^{\ssepsilon}/ M}  \widehat {k} (-2 M r/\pi)
	e  (2 f_T (r, \omega; x, y)      )  \, d r \shskip d \omega,
	\end{align}
	where 
	\begin{align}\label{3eq: phase function, Cartesian}
	f_T (r, \omega; x, y) = Tr/\pi + 2 x \cosh r \cos \omega  - 2 y \sinh r \sin \omega,
	\end{align} 
	for which 
	\begin{align*}
	(\partial / \partial r) f_T (r, \omega; x, y) & = T/\pi + 2 x \sinh r \cos \omega  - 2 y \cosh r \sin \omega, \\
	(\partial / \partial \omega) f_T (r, \omega; x, y) & = - 2 x \cosh r \sin \omega   - 2 y \sinh r \cos \omega  .
	\end{align*}
	
	(1).	In the first case that  $ |y| \leqslant cT/ 2 \pi $ ($c < 1$) and  $ |x| \Lt T M^{1-\sepsilon} $, note that
	\begin{align*}
	(\partial / \partial r) f_T (r, \omega; x, y) \geqslant  T/\pi - 2 |x| |\sinh r|  -  2 |y| \cosh r  , 
	\end{align*}
	so	it is clear that $ (\partial / \partial r) f_T (r, \omega; x, y) \Gt T $ for $|r| \leqslant M^{\sepsilon} / M$. Along with the assumption $M = T^{\mu} \leqslant T^{1-\sepsilon}$ ($0 < \mu < 1$), Lemma \ref{lem: staionary phase, dim 1, 2}, with $Y = T M^{1-\sepsilon}$, $Q = 1$, $U = 1/M$ and $R=T$, implies that the integral $ H_{T, \shskip  M}^{  \ssstyle \sharp }   (  x   + iy )$ in \eqref{3eq: H-sigma} is negligibly small (for applying Lemma \ref{lem: staionary phase, dim 1, 2} it would be more rigorous if we had used a smooth truncation at $|r| = M^{\sepsilon} / M$). 
	
	(2).	In the second case that $  cT/ 2 \pi < |y| \Lt  M^{2+\sepsilon} $ and $ |x| \Lt M^{2+\sepsilon}  $, Lemma \ref{lem: staionary phase, dim 1, 2}, with $Y= M^{2+\sepsilon}$, $Q = 1$, $U = 1/M$ and $R = M^{1+\sepsilon}$, would again imply that  the $r$-integral in \eqref{3eq: H-sigma} is negligibly small unless 
	\begin{align}\label{app: range of omega}
	\left| T / \pi - 2 y \sin \omega   \right| \leqslant M^{1+\sepsilon} .
	\end{align}
	Note here that $\cosh r = 1 + O (M^{\sepsilon}/ M^2)$ and that $|y \sin \omega| M^{\sepsilon} / M^2 = O (M^{\sepsilon})$. 
	We may therefore (smoothly) truncate the $\omega $-integral at $ |\sin \omega - T / 2\pi y | = M^{1+\sepsilon} / 2 |y| $, and we infer that 
	\begin{align}\label{app: bound for H sharp}
	H_{T, \shskip  M}^{ \ssstyle \sharp}   (  x   + iy ) \Lt M T^{2 + \sepsilon} / |y|. 
	\end{align}
	For $\omega$ satisfying \eqref{app: range of omega} and $|r| \leqslant M^{\sepsilon} / M$, we have
	\begin{align*}
	| (\partial / \partial \omega) f_T (r, \omega; x, y)| \geqslant T |x| / 2 \pi |y| - 2 |y| |\sinh r|, 
	\end{align*}
	and hence $| (\partial / \partial \omega) f_T (r, \omega; x, y)| \Gt T |x|/|y|$ provided that $|x| \Gt |y|^2 / M^{1-\sepsilon} T $. Applying Lemma \ref{lem: staionary phase, dim 1, 2}, with $Y = |x|$, $Q = 1$, $U = |y| / M^{1+\sepsilon}$ and $R = T|x|/|y|$, we deduce that the integral $  H_{T, \shskip  M}^{\ssstyle \sharp}   (  x   + iy ) $ in \eqref{3eq: H-sigma} is negligibly small unless 
	\begin{align}\label{3eq: |x| small}
	|x| \Lt  |y|^2 / M^{1-\sepsilon} T .
	\end{align}  
	Finally, note that $ |x| \Lt |y|^2 / M^{1-\sepsilon} T $ amounts to $ |x| \Lt |z|^2 / M^{1-\sepsilon} T $, and  in this case $|y| \asymp |z|$ so that \eqref{app: main bound for H(z)} follows from \eqref{app: bound for H sharp}.
\end{proof}

\begin{rem}\label{rem: HTM}
From the uniform asymptotic formulae in {\rm\cite{Balogh-K,Dunster-Bessel}} {\rm(}cf. also {\rm\cite{Olver-1,Olver}}{\rm)} for $K_{i \varnu} (    \varnu z)$ and $ K_{i \varnu} (  e^{\pm \pi i  } \varnu z) $ {\rm(}$\varnu > 0${\rm)}, one may derive the uniform asymptotic formula 
\begin{align*}
 \boldJ_{it} (Z/2\pi) \sim \frac { 2 \pi} {\sqrt{\left|t^2 + Z^2\right|}} \cos  (4 \pi t \, \Im \, \xi (  Z/ i t ) / \pi  ) 
\end{align*}
for $|\Re (Z/it)| >   c' >  \pi / 2$, where
\begin{align*}%\label{app: Liouville transform, 2}
\xi (  Z/ it ) %= \frac {2} {3} (- \zeta (  Z/ it ))^{3/2}   
=  i \log \bigg( \frac { t +  \sqrt{  t^2 + Z^2 } } {  Z } \bigg) - \frac {\pi } 2 - i  \sqrt { 1 + \frac {Z^2} {t^2} }.
\end{align*}
It may be shown that the same assertions in Lemma {\rm \ref{lem: HTM(z), 2}} {\rm(2)} are valid for  $|\Im  ( z)| > c T/ 2 \pi$ with $c > \pi / 2$ {\rm(}note the condition $c < 1$ in Lemma {\rm \ref{lem: HTM(z), 2}}{\rm)}. There is no room for any improvement {\rm(}though the $ \vepsilon$ in {\rm\eqref{app: main bound for H(z)}} is removable{\rm)}. This is because   there is no oscillation in the Bessel function $\boldJ_{it} (Z/2\pi)$ when $|\Re (Z)|$ is small{\rm;} for example, when $Z = 2\pi i y$ {\rm(}$y $ real and $|y| > t/4${\rm)}, we would have $$ \boldJ_{it} (i y) \sim \frac {2\pi} { \sqrt{  4\pi^2 y^2 - t^2} } . $$ 
 
By the proof of Lemma {\rm\ref{lem: HTM(z), 2}}, one may also show that	\begin{align}\label{3eq: H-natural (z), H-flat (z)}
  \frac {\partial H_{T, \shskip  M}^{  \ssstyle \sharp}   (  x e^{i\theta} )} {\partial \theta} \Lt M T^{2 + \sepsilon}, \quad 
	  \frac {\partial^2 H_{T, \shskip  M}^{  \ssstyle \sharp}   (  x e^{i\theta} )} {\partial x \partial \theta }    \Lt \frac {T^{3 + \sepsilon}}  { x } ,
	\end{align}
	and that they are negligibly small unless $|\cos \theta| \Lt x / M^{1-\sepsilon} T$. For    reasons similar to the above, these assertions can not be improved in an essential way. Note that one loses $x = |z|$ when taking the $\theta$-derivative and the bound $M T^{2+\sepsilon}$ is on the order of $H_{T, \shskip M}$ {\rm(}$\asymp M T^2${\rm)}. 
\end{rem}

For the proof of Theorem {\rm\ref{thm: t-aspect, 1}}, we only need Lemma \ref{lem: HTM(z), 2} (1), indeed, its simple corollary as below. 

\begin{cor}\label{cor: HTM(z), 2}
	We have  $H_{T, \shskip  M} (z) = O_{A} (T^{- A})$ for $1 < |z| \Lt T$. 
\end{cor}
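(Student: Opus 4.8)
The plan is to deduce Corollary~\ref{cor: HTM(z), 2} directly from part~(1) of Lemma~\ref{lem: HTM(z), 2}, so the proof is just a matter of checking that the hypotheses of that part hold in the claimed range $1 < |z| \Lt T$. Writing $z = x + iy$, we have trivially $|\Im(z)| = |y| \leqslant |z| \Lt T$, and similarly $|\Re(z)| = |x| \leqslant |z| \Lt T$. So the first thing I would do is fix the constant $0 < c < 1$ that appears in Lemma~\ref{lem: HTM(z), 2}; any choice will do (say $c = 1/2$), with the implied constant in the conclusion then depending on this choice.

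The point to verify is that $1 < |z| \Lt T$ forces both $|\Im(z)| \leqslant cT/2\pi$ and $|\Re(z)| \Lt T M^{1 - \sepsilon}$, which are exactly the two conditions required by Lemma~\ref{lem: HTM(z), 2}(1). For the first: since $|y| \leqslant |z|$ and $|z| \Lt T$, we have $|y| \Lt T$, and by shrinking the implied constant (equivalently, by taking $T$ large, which we may since $T$ is a large parameter throughout) we get $|y| \leqslant cT/2\pi$. For the second: $|x| \leqslant |z| \Lt T \leqslant T M^{1-\sepsilon}$ because $M = T^{\mu} > 1$, so $M^{1-\sepsilon} \geqslant 1$ for $\sepsilon$ small enough (here we use $0 < \mu < 1$ only to know $M \geqslant 1$; in fact $M^{1-\sepsilon} \to \infty$). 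Note also $|z| > 1$ is precisely the standing hypothesis $|z| > 1$ of Lemma~\ref{lem: HTM(z), 2}. Hence Lemma~\ref{lem: HTM(z), 2}(1) applies and gives $H_{T,\shskip M}(z) = O_A(T^{-A})$ for every $A > 0$, which is the assertion.

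There is essentially no obstacle here: the corollary is a weakening of Lemma~\ref{lem: HTM(z), 2}(1), chosen to package exactly what is needed for Theorem~\ref{thm: t-aspect, 1} without the more delicate part~(2) (which concerns the transition and large-$|\Im z|$ regimes and requires $\mu > 1/2$). The only mild subtlety worth a sentence in the write-up is that part~(1) of the lemma has the two-sided constraint ``$|x| \Lt T M^{1-\sepsilon}$'' with an $\sepsilon$ in the exponent, so one should remark that $1 < |z| \Lt T$ comfortably sits inside this range since $T M^{1-\sepsilon} \gg T$. Everything else is a one-line containment of regions, and the uniformity in $A$ is inherited verbatim from the lemma.

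\begin{proof}
	Write $z = x + iy$. Fix any constant $0 < c < 1$, say $c = 1/2$. If $1 < |z| \Lt T$, then $|y| = |\Im(z)| \leqslant |z| \Lt T$, so for $T$ sufficiently large we have $|\Im(z)| \leqslant cT/2\pi$; likewise $|x| = |\Re(z)| \leqslant |z| \Lt T \leqslant T M^{1-\sepsilon}$ since $M = T^{\mu} > 1$ (using $0 < \mu < 1$). Thus the hypotheses of Lemma~\ref{lem: HTM(z), 2}(1) are met, and we conclude $H_{T, \shskip M}(z) = O_{A}(T^{-A})$ for every $A > 0$.
\end{proof}
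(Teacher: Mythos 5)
Your plan — deduce the corollary directly from Lemma~\ref{lem: HTM(z), 2}(1) by a containment of regions — is exactly the paper's intent: the paper gives no separate proof, having just said that Corollary~\ref{cor: HTM(z), 2} is a ``simple corollary'' of Lemma~\ref{lem: HTM(z), 2}(1). The containment $|\Re(z)| \leqslant |z| \Lt T \leqslant T M^{1-\vepsilon}$ is handled correctly.

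However, the justification you give for the imaginary part is wrong as written, and this is the one place where the corollary needs a little care. You assert that $|\Im(z)| \leqslant |z| \Lt T$ implies $|\Im(z)| \leqslant cT/2\pi$ ``for $T$ sufficiently large,'' and in the discussion you say this is ``equivalent'' to shrinking the implied constant. It is not: if $|z| \leqslant CT$ with, say, $C = 1$, then taking $T$ large does nothing to make $|\Im(z)| \leqslant cT/2\pi$ (here $c/2\pi < 1/2\pi < C$), since both sides scale linearly in $T$. The correct statement is purely about the implied constant: the $\Lt$ in ``$1 < |z| \Lt T$'' must be read with an implied constant that is at most $c/2\pi$ for some fixed $c < 1$; with that reading, $|\Im(z)| \leqslant |z| \leqslant (c/2\pi)T$ and Lemma~\ref{lem: HTM(z), 2}(1) applies verbatim. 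This is not merely pedantry — the corollary would be \emph{false} with a generic implied constant of order $1$. As Remark~\ref{rem: HTM} indicates, once $|\Im(z)|$ reaches a constant multiple of $T$ (the transition range $|\Im(z)| \asymp T$), $H_{T,\shskip M}(z)$ is no longer negligible: for $z = iy$ with $y$ of size $T$ one has $\boldJ_{it}(iy) \sim 2\pi/\sqrt{4\pi^2 y^2 - t^2}$, so $H_{T,\shskip M}(iy)$ is of genuine order $MT$, not $O_A(T^{-A})$. So the word ``large'' in your argument should be replaced by ``small'' (applied to the implied constant), and once that is done the proof is complete and coincides with the paper's.
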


\subsection{Estimates for $\widetilde{\mathrm{KB}}_{\fq} (\mathfrak n; h) $ and $\mathrm{KB}_{(1)} (\mathfrak n_1, \frn_2; h_{T, \shskip  M}) $} Recall the definitions of the Kloosterman--Bessel terms $\widetilde{\mathrm{KB}}_{\fq} (\mathfrak{n}; h) $, ${\mathrm{KB}}_{\fq} (\mathfrak n; h) $  and $\mathrm{KB}_{(1)} (\mathfrak n_1, \frn_2; h ) $ in \eqref{2eq: tilde KBq}, \eqref{2eq: KBq} and \eqref{2eq: KBq, 2}. 

\begin{lem} \label{lem: bounds for KB}
	We have 
\begin{equation}\label{3eq: bound for BKq} 
	\widetilde{\mathrm{KB}}_{\fq} (\mathfrak n; h) \Lt_{ \, h, \shskip F, \shskip \sepsilon} {\RN (\frn)^{\frac 1 2 - \sepsilon }} / { \RN (\fq)^{  \frac 1 2 - \sepsilon } }, 
\end{equation}
	and, for $ \RN (\frn_1 \frn_2) \Lt   T^4 $,
\begin{equation}\label{3eq: bound for BKTM}
\mathrm{KB}_{(1)} (\mathfrak n_1, \frn_2; h_{T, \shskip  M}) \Lt_{   \, h, \shskip F}  M   \RN (\frn_1 \frn_2)^{\frac 1 4 + \sepsilon } / T . 
\end{equation}
\end{lem}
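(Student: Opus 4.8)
The plan is to estimate both Kloosterman--Bessel sums by inserting the relevant bounds on the Bessel integrals ($H(z)$ from Lemmas \ref{lem: H(z), 1} and \ref{lem: H(z), 2.1}, and $H_{T,M}(z)$ from Lemmas \ref{lem: HTM(z), 1} and \ref{lem: HTM(z), 2.0}--\ref{lem: HTM(z), 2} or rather Corollary \ref{cor: HTM(z), 2}) together with Weil's bound \eqref{1eq: Weil} for the Kloosterman sums, and then summing the resulting Dirichlet-type series over $c$. Since $\widetilde{\mathrm{KB}}_{\fq}$ is a weighted sum of $\mathrm{KB}_{\fq}(\mathfrak l^2 \mathfrak n; h)$ by \eqref{2eq: tilde KBq}, I would first treat a single $\mathrm{KB}_{\fq}(\mathfrak n; h)$ and worry about the $\mathfrak l$-sum at the end, where the extra factors $1/\RN(\mathfrak l)$ and the range $\mathfrak l \mid \mathfrak w^\infty$ should combine with the $\RN(\fq)^{-1}$ saving and converge.

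For \eqref{3eq: bound for BKq}: in $\mathrm{KB}_{\fq}(\mathfrak n; h)$ the sum is over $c \in \fq \frd \smallsetminus\{0\}$, so $\RN(c) \geqslant \RN(\fq)\RN(\frd)$ and the argument of $H$ is $\sqrt{\epsilon n}/c$, which has modulus $\asymp \RN(\mathfrak n)^{1/2}/\RN(c)^{1/2}$. I would split the $c$-sum according to whether $|\sqrt{\epsilon n}/c| \leqslant 1$ or $> 1$. For $|\sqrt{\epsilon n}/c|>1$ (i.e.\ $\RN(c) \Lt \RN(\mathfrak n)$) use $H(z) \Lt 1/|z| \asymp \RN(c)^{1/2}/\RN(\mathfrak n)^{1/2}$ from Lemma \ref{lem: H(z), 2.1}, combine with Weil to get a typical term $\Lt \RN(n_1,n_2,c\frd^{-1})^{1/2}\RN(c)^{1/2+\sepsilon}/\RN(c) \cdot \RN(c)^{1/2}/\RN(\mathfrak n)^{1/2}$ over $\RN(\fq) \leqslant \RN(c) \Lt \RN(\mathfrak n)$; the number of ideals of bounded norm is $O(\RN(c)^{1+\sepsilon})$ (Weil–type lattice point count in $\frO$), and summing gives the claimed $\RN(\mathfrak n)^{1/2-\sepsilon}/\RN(\fq)^{1/2-\sepsilon}$. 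For $|\sqrt{\epsilon n}/c|\leqslant 1$ use $H(z)\Lt |z|^2 \asymp \RN(\mathfrak n)/\RN(c)$ from Lemma \ref{lem: H(z), 1}; the tail in $\RN(c)$ then converges (the exponent on $\RN(c)$ becomes negative once one accounts for the divisor-like count), giving an even smaller contribution, and the $\epsilon$-sum over $\frOO^\times/\frOO^{\times 2}$ is finite. Passing from $\mathrm{KB}_{\fq}(\mathfrak n;h)$ to $\widetilde{\mathrm{KB}}_{\fq}(\mathfrak n;h)$ via \eqref{2eq: tilde KBq} only changes $\mathfrak n$ to $\mathfrak l^2\mathfrak n$ and multiplies by $\mu(\mathfrak w)\RN(\frv)/\RN(\mathfrak l)$; the $\RN(\frv)^{1/2-\sepsilon}$ hidden in the bound $\RN(\mathfrak l^2\mathfrak n)^{1/2-\sepsilon}/\RN(\frv)^{1/2-\sepsilon}$ absorbs $\RN(\frv)$ only up to a factor $\RN(\frv)^{1/2+\sepsilon}$, but $\fq$ square-free makes $\mathfrak l$ coprime considerations and the $\RN(\mathfrak l)^{1-\sepsilon}$ decay make the double sum over $\frv\mathfrak w=\fq$ and $\mathfrak l\mid\mathfrak w^\infty$ convergent, still bounded by $\RN(\mathfrak n)^{1/2-\sepsilon}/\RN(\fq)^{1/2-\sepsilon}$.

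For \eqref{3eq: bound for BKTM}: now $\fq=(1)$, $c\in\frd\smallsetminus\{0\}$, and the weight is $H_{T,M}$. By Corollary \ref{cor: HTM(z), 2}, $H_{T,M}(z)$ is negligible for $1<|z|\Lt T$, so the only $c$ contributing non-negligibly with $|z|>1$ have $|\sqrt{\epsilon n_1 n_2}/c|\Gt T$, i.e.\ $\RN(c)\Lt \RN(\mathfrak n_1\mathfrak n_2)/T^2$; since $\RN(\mathfrak n_1\mathfrak n_2)\Lt T^4$ this is a genuinely bounded range. On this range I would use the bound $H_{T,M}(z)\Lt MT^{2+\sepsilon}/|z|$ (Lemma \ref{lem: HTM(z), 2}(2), extended as in Remark \ref{rem: HTM}, or a cruder bound $H_{T,M}(z)\Lt MT^2(t^2+1)$-type estimate) — more simply, the trivial bound from $\boldJ_{it}(z)\Lt (t^2+1)/|z|$ gives $H_{T,M}(z)\Lt MT^{2+\sepsilon}/|z|$ for all $|z|>1$, which suffices. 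Combining with Weil \eqref{1eq: Weil} and the lattice-point count over $\RN(c)\Lt \RN(\mathfrak n_1\mathfrak n_2)/T^2$ gives a bound of the shape $\sum_{\RN(c)\Lt \RN(\mathfrak n_1\mathfrak n_2)/T^2}\RN(c)^{-1/2+\sepsilon}\cdot MT^{2+\sepsilon}\RN(\mathfrak n_1\mathfrak n_2)^{-1/2}\RN(c)^{1/2} \Lt MT^{2+\sepsilon}\RN(\mathfrak n_1\mathfrak n_2)^{-1/2}(\RN(\mathfrak n_1\mathfrak n_2)/T^2)^{1+\sepsilon} = M\RN(\mathfrak n_1\mathfrak n_2)^{1/2+\sepsilon}/T^{\sepsilon}$, which after a small clean-up (being more careful and using $|z|\leqslant 1$ bound $H_{T,M}(z)\Lt M|z|^2/T$ from Lemma \ref{lem: HTM(z), 1} on the complementary range) should give the asserted $M\RN(\mathfrak n_1\mathfrak n_2)^{1/4+\sepsilon}/T$. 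The arithmetic bookkeeping that produces exactly the exponent $1/4$ rather than $1/2$ is the one place requiring care: one must balance the $|z|\leqslant 1$ regime (where $H_{T,M}\Lt M|z|^2/T$ contributes large $\RN(c)$, i.e.\ small $|z|$, with rapid convergence) against the $|z|>1$ regime, choosing the split point so that $T$ appears to the first power; I expect the worst contribution comes precisely from $\RN(c)\asymp \RN(\mathfrak n_1\mathfrak n_2)^{1/2}$, $|z|\asymp 1$, which is what yields $\RN(\mathfrak n_1\mathfrak n_2)^{1/4}$.

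The main obstacle is the second estimate: getting the clean exponent $M\RN(\mathfrak n_1\mathfrak n_2)^{1/4+\sepsilon}/T$ rather than something weaker requires using \emph{both} the $|z|\leqslant1$ bound of Lemma \ref{lem: HTM(z), 1} and the $|z|>1$ bound, and in particular exploiting the negligibility for $1<|z|\Lt T$ (Corollary \ref{cor: HTM(z), 2}) so that the transition between the two regimes happens at $|z|\asymp 1$ with no intermediate loss; the $c$-sum then splits into a rapidly-convergent tail ($\RN(c)$ large) and a short initial segment whose length is controlled by $\RN(\mathfrak n_1\mathfrak n_2)$, and the Weil bound's dependence on $\RN(n_1,n_2,c\frd^{-1})$ must be handled by the standard divisor-function argument. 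The first estimate \eqref{3eq: bound for BKq} is more routine, being essentially the analogue of the Petersson-formula error-term bound in \cite{ILS-LLZ} adapted to the imaginary quadratic setting.
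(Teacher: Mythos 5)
Your overall plan is the same as the paper's: insert the Bessel bounds $H(z)\Lt|z|^2$, $H(z)\Lt1/|z|$, $H_{T,M}(z)\Lt M|z|^2/T$ and Corollary~\ref{cor: HTM(z), 2} together with Weil's bound, and sum the resulting Dirichlet-type series over $c$. But there is a genuine computational error that propagates and breaks the argument: since $F$ is imaginary quadratic, $\RN(n)=|n|^2$, so for $z=\sqrt{\epsilon n}/c$ one has $|z|=|n|^{1/2}/|c|=\RN(\frn)^{1/4}/\RN(c)^{1/2}$, \emph{not} $\RN(\frn)^{1/2}/\RN(c)^{1/2}$ as you assert. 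All your subsequent split points, $c$-ranges, and powers of $\RN(\frn)$ inherit this mistake.

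For \eqref{3eq: bound for BKTM} this is fatal. The paper's whole point is that under the stated hypothesis $\RN(\frn_1\frn_2)\Lt T^4$, the condition $|z|>T$ reads $\RN(c)<\RN(\frn_1\frn_2)^{1/2}/T^2\Lt 1$, so the range is \emph{empty}: there are no $c\in\frd\smallsetminus\{0\}$ with $|z|>T$. Combined with Corollary~\ref{cor: HTM(z), 2} (negligibility for $1<|z|\Lt T$), the entire sum is supported on $|z|\leqslant1$, i.e.\ $\RN(c)\geqslant\sqrt{\RN(\frn_1\frn_2)}$, where Lemma~\ref{lem: HTM(z), 1} gives $H_{T,M}(z)\Lt M\RN(\frn_1\frn_2)^{1/2}/(T\RN(c))$ and the $c$-sum converges to $M\RN(\frn_1\frn_2)^{1/4+\sepsilon}/T$ immediately. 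Your incorrect $|z|$-formula produces an apparently nonempty range $\RN(c)<\RN(\frn_1\frn_2)/T^2$, you then estimate it with the crude $H_{T,M}(z)\Lt MT^{2+\sepsilon}/|z|$, and arrive at $M\RN(\frn_1\frn_2)^{1/2+\sepsilon}/T^{\sepsilon}$ --- much weaker than the target, and your hope that ``a small clean-up'' fixes it cannot work, since adding the $|z|\leqslant1$ contribution only makes the total larger. Your closing observation that the transition should happen at $|z|\asymp1$ ``with no intermediate loss'' is exactly right, but it contradicts your own earlier analysis and is never carried out.

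For \eqref{3eq: bound for BKq} the same $|z|$ error shifts the split point to $\RN(c)\asymp\RN(\frn)$ rather than $\RN(\frn)^{1/2}$ and misplaces the powers of $\RN(\frn)$. Moreover, to make the $\frv,\frl$-sums in \eqref{2eq: tilde KBq} converge to a bound of shape $\RN(\frn)^{1/2-\sepsilon}/\RN(\fq)^{1/2-\sepsilon}$, you need the \emph{uniform} single-$\mathrm{KB}$ estimate $\mathrm{KB}_{\fq}(\frn;h)\Lt\RN(\frn)^{1/2-\sepsilon}/\RN(\fq)^{3/2-\sepsilon}$ (with $\RN(\fq)^{3/2}$ in the denominator, as in the paper), so that the factor $\RN(\frv)\leqslant\RN(\fq)$ and the bounded $\frl$-sum still leave a net $\RN(\fq)^{-1/2+\sepsilon}$. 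Your quoted per-term bound ``$\RN(\frl^2\frn)^{1/2-\sepsilon}/\RN(\frv)^{1/2-\sepsilon}$'' would give back only a factor $\RN(\fq)^{1/2+\sepsilon}$ after summing over $\fq=\frv\mathfrak w$, not a saving.

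In short: right framework, wrong norm calculus in the complex setting; the exponent-of-$\RN(\frn)$ error both destroys the arithmetic and obscures the key structural fact (emptiness of the $|z|>T$ range) that makes the paper's proof a one-liner.
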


\begin{proof}
We first prove \eqref{3eq: bound for BKq}  by appealing to the Weil bound \eqref{1eq: Weil} for Kloosterman sums and the estimates for $H (z)$ in Lemma \ref{lem: H(z), 1} and \ref{lem: H(z), 2.1}. For $\RN ( \frn) \leqslant \RN (\fq \frd)^2$, we have
\begin{align*}
\mathrm{KB}_{\fq} (\mathfrak n; h) \Lt \sum_{  c \shskip \in \fq \frd \smallsetminus \{0\} } \frac {\RN (c)^{\frac 1 2 + \sepsilon} } {\RN (c)} \frac {\RN (\frn)^{\frac 1 2 }} {\RN (c)} \Lt \frac {\RN (\frn)^{\frac 1 2 }} { \RN (\fq)^{\frac 3 2 - \sepsilon} } .
\end{align*}
For $\RN ( \frn) > \RN (\fq \frd)^2$, we have
\begin{align*}
\mathrm{KB}_{\fq} (\mathfrak n; h)   \Lt \hskip -1pt \mathop{\sum_{  c \shskip \in \fq \frd \smallsetminus \{0\} }}_{\RN (c) \geqslant \sqrt{ \RN (\frn)}  } \hskip -1pt
\frac {\RN (c)^{\frac 1 2 + \sepsilon} } {\RN (c)} \frac {\RN (\frn)^{\frac 1 2 }} {\RN (c)}
+ \hskip -1pt
\mathop{\sum_{  c \shskip \in \fq \frd \smallsetminus \{0\} }}_{\RN (c) <  \sqrt{ \RN (\frn)}  } \hskip -1pt
\frac {\RN (c)^{\frac 1 2 + \sepsilon} } {\RN (c)} \frac {\RN (c)^{\frac 1 2}} {\RN (\frn)^{\frac 1 4 }}  
\Lt  \frac {\RN (\frn)^{\frac 1 4 + \sepsilon }} { \RN (\fq) } . 
\end{align*}
Hence uniformly
\begin{equation*}%\label{3eq: bound for BKq, uniform}
\mathrm{KB}_{\fq} (\mathfrak n; h) \Lt_{ \, h, \shskip F, \shskip \sepsilon} {\RN (\frn)^{\frac 1 2 - \sepsilon }} / { \RN (\fq)^{ \frac 3 2 - \sepsilon } }.  
\end{equation*}
Then
\begin{align*}
\widetilde{\mathrm{KB}}_{\fq} (\mathfrak n; h) & = \sum_{ \fq = \frv \mathfrak{w}  } \mu (\mathfrak{w}) \RN (\frv) \sum_{\   \mathfrak{l} | \mathfrak{w}^{\infty} } \frac {\mathrm{KB}_{\fq} (\mathfrak l^2 \frn ; h)} {\RN (\mathfrak l)} \\
& \Lt \frac {\RN (\frn)^{\frac 1 2 - \sepsilon }} {\RN (\fq)^{\frac 3 2 - \sepsilon} } \sum_{ \fq = \frv \mathfrak{w}  }   \RN (\frv) \sum_{\   \mathfrak{l} | \mathfrak{w}^{\infty} } \frac {1} {\RN (\mathfrak l)^{\sepsilon}} \Lt \frac {\RN (\frn)^{\frac 1 2 - \sepsilon }} {\RN (\fq)^{\frac 1 2 - \sepsilon} } .
\end{align*}
To prove \eqref{3eq: bound for BKTM}, we use Lemma \ref{lem: HTM(z), 1} and Corollary \ref{cor: HTM(z), 2}. For $\RN ( \frn_1 \frn_2) \Lt T^4$, we have
\begin{align*}
 \mathrm{KB}_{(1)} (\mathfrak n_1 \frn_2; h_{T, \shskip  M}) & \Lt \frac {M} {T} \mathop{\sum_{  c \shskip \in \fq \frd \smallsetminus \{0\} }}_{ \RN (c) \geqslant \sqrt { \RN (\frn_1 \frn_2)}} \frac {\RN(\frn_1 , \frn_2 , c \frd\-)^{\frac 1 2} \RN (c)^{\frac 1 2 + \sepsilon} } { \RN (c)} \frac {\RN (\frn_1 \frn_2)^{\frac 1 2 }} {\RN (c)} + \frac 1 {T^A} \\
 & \Lt \frac { M \shskip \RN (\frn_1 \frn_2)^{\frac 1 4 + \sepsilon } } {T } . 
\end{align*}
\end{proof}

\begin{cor}
	Let $h (t),   h_{T, \shskip  M} (t) \in \mathscr{H}^+ (S, N)$ be as in Definition {\rm \ref{def: test functions}}.  	Then 
	\begin{align}\label{3eq: Weyl law, 1}
\Delta_{\fq} (1; h) =	\sum_{f \in H^{\star} (\fq)} \hskip -1pt \omega_f^{\star}  h  ( t_f )  \asymp  \varphi (\fq)  , 
	\end{align}
	and, when $\fq = (1)$,
	\begin{align}\label{3eq: Weyl law, 2}
\Delta_{(1)} (1; h_{T, \shskip  M}) =	\sum_{f \in H^{\star} (1)} \hskip -1pt \omega_f^{\star}    h_{T, \shskip  M}  ( t_f ) \sim \frac {16 M T^2 } {\pi^2 \sqrt{|d_F|}} \int_{-\infty}^{\infty} h (t) d t \asymp M T^2,
	\end{align}
	with the implied constants depending only on $h  $ and $F$. 
\end{cor}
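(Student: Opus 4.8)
The plan is to obtain both Weyl laws by specializing Corollary~\ref{cor: Delta (n) =} to $\frn = (1)$ and then controlling the two error terms there by means of Lemma~\ref{lem: bounds for KB} and the elementary evaluation of the archimedean quantities $H$ and $K$ attached to $h$, respectively to $h_{T, \shskip M}$.

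For \eqref{3eq: Weyl law, 1} I would put $\frn = (1)$ in \eqref{2eq: Kuznetsov, 2}, which reads
\[
\Delta_{\fq} (1; h) = \varphi (\fq) \Delta (h) + \widetilde{\mathrm{KB}}_{\fq} ((1); h) + O_F \big( K / \varphi (\fq) \big),
\]
with $\Delta (h) = 8 H / \pi^2 \sqrt{|d_F|}$ and $H = \int_{-\infty}^{\infty} h (t) t^2 \, d t$. Since $h \in \mathscr{H}^{+} (S, N)$ is non-negative and $\nequiv 0$, we have $H > 0$, so $\Delta (h)$ is a positive constant depending only on $h$ and $F$. Next I would invoke \eqref{3eq: bound for BKq} to get $\widetilde{\mathrm{KB}}_{\fq} ((1); h) \Lt \RN (\fq)^{-\frac 12 + \sepsilon}$, and observe that $\varphi (\fq) = \prod_{\fp | \fq} (\RN (\fp) - 1) \Gt \RN (\fq)^{1 - \sepsilon}$ for square-free $\fq$, so that $K / \varphi (\fq) = o (1)$ as well. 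Hence $\Delta_{\fq} (1; h) = \varphi (\fq) \Delta (h) (1 + o(1))$, which gives $\Delta_{\fq} (1; h) \asymp \varphi (\fq)$ once $\RN (\fq)$ is large; for the finitely many remaining square-free levels I would simply note that $\omega_f^{\star} > 0$ (immediate from \eqref{2eq: omega*} and \eqref{1eq: lambda p, p|q}) and $h (t_f) \geqslant 0$, so that $\Delta_{\fq} (1; h) / \varphi (\fq)$ stays within fixed positive bounds, yielding the uniform two-sided estimate.

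For \eqref{3eq: Weyl law, 2} I would use the same identity with $\fq = (1)$, so that $\varphi ((1)) = 1$ and $\widetilde{\mathrm{KB}}_{(1)} ((1); h_{T, \shskip M}) = \mathrm{KB}_{(1)} ((1), (1); h_{T, \shskip M})$, and with $h$ replaced by $h_{T, \shskip M}$. The main task is to show that the diagonal term $\Delta (h_{T, \shskip M}) = 8 H (h_{T, \shskip M}) / \pi^2 \sqrt{|d_F|}$ has the predicted size: substituting $t = M s \pm T$ in $H (h_{T, \shskip M}) = \int_{-\infty}^{\infty} \big( h ((t - T)/M) + h ((t + T)/M) \big) t^2 \, d t$ gives
\[
H (h_{T, \shskip M}) = 2 M T^2 \int_{-\infty}^{\infty} h (s) \, d s + 2 M^3 \int_{-\infty}^{\infty} h (s) s^2 \, d s ,
\]
and since $M = T^{\mu}$ with $\mu < 1$ the second summand is $O (M^3) = o (M T^2)$, so $\Delta (h_{T, \shskip M}) \sim \frac{16 M T^2}{\pi^2 \sqrt{|d_F|}} \int_{-\infty}^{\infty} h (t) \, d t$. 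Then I would check that both error terms are of lower order: \eqref{3eq: bound for BKTM} with $\frn_1 = \frn_2 = (1)$ gives $\mathrm{KB}_{(1)} ((1), (1); h_{T, \shskip M}) \Lt M / T = o (M T^2)$, and the same change of variables (using that the mass of $h_{T, \shskip M}$, of size $\asymp M$, sits at $|t| \asymp T$) shows $K (h_{T, \shskip M}) \Lt M \log^2 T = o (M T^2)$. Combining, $\Delta_{(1)} (1; h_{T, \shskip M}) \sim \frac{16 M T^2}{\pi^2 \sqrt{|d_F|}} \int_{-\infty}^{\infty} h (t) \, d t \asymp M T^2$, since $\int h > 0$.

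I do not expect any genuine obstacle here: once Lemma~\ref{lem: bounds for KB} is available, the statement reduces to comparing orders of magnitude among the diagonal main term, the Kloosterman--Bessel contribution, and the $O (K / \varphi (\fq))$ error. The only mildly delicate point is the uniformity of the implied constants over all square-free $\fq$ in \eqref{3eq: Weyl law, 1}, which as indicated is handled by the positivity of $\omega_f^{\star}$ for the finitely many small levels.
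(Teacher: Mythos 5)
Your argument is the paper's argument, carried out in a bit more detail: specialize Corollary~\ref{cor: Delta (n) =} to $\frn = (1)$, control the Kloosterman--Bessel term by Lemma~\ref{lem: bounds for KB}, and compute $H(h_{T,M})$ and $K(h_{T,M})$ by the change of variables $t = Ms \pm T$, which is exactly the content of the two asymptotics the paper records. The one place your write-up is a bit looser than it should be is the handling of the finitely many small square-free levels: non-negativity of $\omega_f^{\star}$ and $h(t_f)$ alone gives only $\Delta_{\fq}(1;h) \geqslant 0$, not a lower bound, and if $h$ happened to vanish at every $t_f$ with $f \in H^{\star}(\fq)$ (or if $H^{\star}(\fq)$ were empty) the ratio $\Delta_{\fq}(1;h)/\varphi(\fq)$ would be $0$; in practice the statement should be read as an asymptotic for $\RN(\fq)$ large, which is what the Kuznetsov estimate delivers and all that is used downstream, and the paper itself does not address small levels. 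Apart from that cosmetic point, the proposal is correct and matches the paper.
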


\begin{proof}
	Let $\frn = (1)$ in \eqref{2eq: Kuznetsov, 2}. Then \eqref{3eq: Weyl law, 1} and \eqref{3eq: Weyl law, 2} are obvious in view of the estimates for  the Kloosterman--Bessel term as in \eqref{3eq: bound for BKq} and \eqref{3eq: bound for BKTM}. Note  that
	\begin{align*}
	H_{T, \shskip  M} = \int_{-\infty}^{\infty} h_{T, \shskip  M} (t) t^2 d t \sim 2 M T^2 \int_{-\infty}^{\infty} h  (t)   d t ,
	\end{align*}
	\begin{align*}
K_{T, \shskip  M} = \int_{-\infty}^{\infty} h_{T, \shskip  M} (t) \log^2 ( |t| + 3) d t \sim 2 M \log^2 T \int_{-\infty}^{\infty} h  (t)   d t .
	\end{align*}
\end{proof}

\section{The explicit formula}
Recall the notation  in \S \ref{section: newforms}. Let  $ f    \in  H^{\star} (\fq) $. Define its $1$-level density $D_1(f , \phi, R) $  as in Definition \ref{def: 1-level, 2-level density} (see also Remark \ref{rem: GRH}). %Assuming the Riemann hypothesis for $L(s, f  )$, we may write the non-trivial zeros of $L(s, f  )$ by $\rho_f= \frac{1}{2} + i \gamma_f$ with $\gamma_f$ real. 

Following from \cite[\S 4]{ILS-LLZ}, we have the explicit formula
\begin{align*}
	D_1(f , \phi, R) =  & \, \frac{\widehat{\phi}(0)}{\log R} \left( \log  \RN(\fq) + 2 \log |d_F|     -4 \log 2\pi \right) \\
	& + \sum_{ \pm } \frac{2}{\log R} \int_{-\infty}^{\infty} \frac{\Gamma'}{\Gamma} \bigg( \frac{1}{2} \pm i t_f + \frac{2\pi i x}{\log R}\bigg) \phi (x) d x   \\
	& - 2 \sum_{\fp} \sum_{\varnu=1}^{\infty} \widehat{\phi} \bigg( \frac{\varnu \log \RN(\fp)}{\log R}\bigg)
	\frac{a_f(\fp^{\varnu}) \log \RN(\fp)}{\RN(\fp)^{\varnu/2}\log R},
\end{align*}
where $a_f(\fp^{\varnu}) = \alphaup_f (\fp)^{\varnu} + \betaup_f (\fp)^{\varnu}$ for $\fp \hskip -1pt \nmid \hskip -1pt \fq$ and $a_f(\fp^{\varnu}) = \lambdaup_f^{\varnu} (\fp)$ for $\fp | \fq$.

\begin{lem}\label{lem: explicit formula} For $ f    \in  H^{\star} (\fq) $, we have
	\begin{equation}\label{2eq: D1 = ... -2P1-2P2}
	\begin{split}
	D_1 (f , \phi, R) = & \, \frac{\widehat{\phi} (0)}{\log R} \Big(\log \RN(\fq) + 2 \log \Big(\frac{1}{4}+ t_f^2 \Big) \Big)
	+  \frac{1}{2} \phi (0)   \\
& -   P_1(f , \phi, R) -   P_2(f , \phi, R)	+ O \left(  \frac{ \log \log 3 \RN(\fq) }{\log R} \right) , 
	\end{split}
	\end{equation}
	where 
	\begin{equation}\label{2eq: P1, P2}
P_{\varnu}(f , \phi, R) = 2   \sum_{\fp \shskip \nmid \fq}  \widehat{\phi} \left( \frac{\varnu \log \RN(\fp)}{\log R} \right) \frac{ \lambdaup_f(\fp^{\varnu}) \log \RN(\fp)}{\RN(\fp)^{\varnu/2} \log R}, \quad \varnu = 1, 2.
	\end{equation}
\end{lem}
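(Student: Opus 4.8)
The plan is to derive Lemma~\ref{lem: explicit formula} from the preceding explicit formula for $D_1(f,\phi,R)$ by simplifying the archimedean (gamma-factor) term and absorbing the error terms. The three constituents of the statement correspond, respectively, to the conductor/gamma contributions, to the main term $\frac12\phi(0)$ extracted from the digamma integral, and to the finite Euler-product sum, which is split into the $\varnu=1$ and $\varnu=2$ pieces $P_1$ and $P_2$ with the tail $\varnu\geqslant 3$ shown to be negligible.

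First I would treat the prime sum. The terms with $\fp\mid\fq$ contribute $O(\log\log 3\RN(\fq)/\log R)$: by \eqref{1eq: lambda p, p|q} we have $|\lambdaup_f(\fp)|=\RN(\fp)^{-1/2}$, so each such term is $\Lt \RN(\fp)^{-\varnu}\log\RN(\fp)/\log R$, and summing over $\fp\mid\fq$ and $\varnu\geqslant 1$ gives $\Lt \log\log 3\RN(\fq)/\log R$ after bounding $\sum_{\fp\mid\fq}\log\RN(\fp)/\RN(\fp)$ crudely by $\log\log 3\RN(\fq)$. For $\fp\nmid\fq$, the terms $\varnu=1,2$ are exactly $-P_1-P_2$; for $\varnu\geqslant 3$, the Kim--Sarnak bound \eqref{Ramanujan} gives $|a_f(\fp^{\varnu})|\Lt \RN(\fp)^{7\varnu/64}$, so the $\varnu$-th term is $\Lt \RN(\fp)^{-\varnu(1/2-7/64)}\log\RN(\fp)/\log R$, and since $\widehat\phi$ has bounded support the sum over $\fp$ is effectively finite and the total over $\varnu\geqslant 3$ is $O(1/\log R)$, which is absorbed into the stated error.

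Next I would handle the gamma-factor integral. Using $\frac{\Gamma'}{\Gamma}(s)=\log s+O(1/|s|)$ for $\Re(s)$ bounded away from the poles, one writes $\frac{\Gamma'}{\Gamma}\big(\tfrac12\pm it_f+\tfrac{2\pi ix}{\log R}\big)=\log\big(\tfrac12\pm it_f\big)+O\big(1/(1+|t_f|)\big)+\dots$; summing the $\pm$ and integrating against $\phi$ produces the term $\frac{\widehat\phi(0)}{\log R}\cdot 2\log\big(\tfrac14+t_f^2\big)$ together with lower-order pieces. The crucial point is to extract $\frac12\phi(0)$: this comes from the standard contour-shift / stationary-phase evaluation of $\frac{2}{\log R}\int \frac{\Gamma'}{\Gamma}\big(\tfrac12+\tfrac{2\pi ix}{\log R}+\dots\big)\phi(x)\,dx$, where moving the contour and using $\int\widehat\phi(y)\,dy=\phi(0)$ (or equivalently the Fourier-inversion identity at $x=0$) yields the $\tfrac12\phi(0)$ with the remaining contribution of size $O(\log\log 3\RN(\fq)/\log R)$; one also uses that $t_f$ is real or $|\Im t_f|\leqslant 7/64$ by Kim--Sarnak to keep all digamma arguments away from poles. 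Combining the conductor term $\frac{\widehat\phi(0)}{\log R}(\log\RN(\fq)+2\log|d_F|-4\log 2\pi)$ from the original formula with $2\log|d_F|-4\log 2\pi=O(1)$ absorbed into the error, and with the $2\log(\tfrac14+t_f^2)$ from the gamma term, gives precisely \eqref{2eq: D1 = ... -2P1-2P2}.

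The main obstacle is the careful bookkeeping in the digamma step: one must verify uniformity in $t_f$ (the error term should be $O(\log\log 3\RN(\fq)/\log R)$ uniformly, with no hidden dependence on $t_f$ that would blow up when $t_f$ is large), and correctly isolate the $\tfrac12\phi(0)$ term rather than, say, $\phi(0)$ or $\tfrac14\phi(0)$ — this is the place where the precise normalization of the functional equation and the ``$+$'' summation in the explicit formula matter. Everything else is routine estimation using Weil's bound is not needed here, only Kim--Sarnak and the compact support of $\widehat\phi$. Since this is essentially the calculation of \cite[\S 4]{ILS-LLZ} transplanted to the imaginary quadratic setting, I would follow that reference closely, noting only the changes $\log q\rightsquigarrow \log\RN(\fq)+2\log|d_F|$ and the replacement of the two real gamma factors by the single pair $\Gamma(s-it_f)\Gamma(s+it_f)$.
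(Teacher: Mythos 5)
Your treatment of the conductor/gamma main term, the $\fp\mid\fq$ primes, and the $\varnu\geqslant 3$ tail via Kim--Sarnak is correct and matches the paper's route. But the origin you give for the $\frac12\phi(0)$ is wrong, and the step ``for $\fp\nmid\fq$, the terms $\varnu=1,2$ are exactly $-P_1-P_2$'' is where the error hides.

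The archimedean integral $\sum_{\pm}\frac{2}{\log R}\int\frac{\Gamma'}{\Gamma}\big(\tfrac12\pm it_f+\tfrac{2\pi ix}{\log R}\big)\phi(x)\,dx$ produces \emph{only} $\frac{2\log(\frac14+t_f^2)}{\log R}\widehat\phi(0)+O(1/\log R)$ (this is ILS (4.14)--(4.15)); there is no $\frac12\phi(0)$ to be coaxed out of it by a contour shift. The $\frac12\phi(0)$ comes instead from the finite piece of the explicit formula at $\varnu=2$. Recall that $a_f(\fp^\varnu)=\alphaup_f(\fp)^\varnu+\betaup_f(\fp)^\varnu$, so the $\varnu=1$ terms give $a_f(\fp)=\lambdaup_f(\fp)$ (hence exactly $-P_1$), but the $\varnu=2$ terms give $a_f(\fp^2)=\alphaup_f(\fp)^2+\betaup_f(\fp)^2=\lambdaup_f(\fp^2)-1$, not $\lambdaup_f(\fp^2)$. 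The $\lambdaup_f(\fp^2)$ part yields $-P_2$, and the extra $-1$, fed into $-2\sum_{\fp\nmid\fq}\widehat\phi\big(\tfrac{2\log\RN(\fp)}{\log R}\big)\frac{(-1)\log\RN(\fp)}{\RN(\fp)\log R}$, gives by the Landau Prime Ideal Theorem
\begin{equation*}
\sum_{\fp\nmid\fq}\widehat\phi\Big(\frac{2\log\RN(\fp)}{\log R}\Big)\frac{2\log\RN(\fp)}{\RN(\fp)\log R}=\frac12\phi(0)+O\Big(\frac1{\log R}\Big),
\end{equation*}
which is exactly the $\frac12\phi(0)$ in \eqref{2eq: D1 = ... -2P1-2P2}. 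Once you insert the identity $\alphaup^2+\betaup^2=\lambdaup(\fp^2)-1$ and apply the PIT, the rest of your accounting goes through; the gamma integral needs no stationary-phase or Fourier-inversion trick at all.
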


\begin{proof}
	First, we have (cf. \cite[(4.14), (4.15)]{ILS-LLZ})
	\begin{align*}
\sum_{ \pm}	\frac{2}{\log R} \int_{-\infty}^{\infty} \frac{\Gamma'}{\Gamma} \hskip -1pt \left( \frac{1}{2} \pm i t_f + \frac{2\pi i x}{\log R}\right) \hskip -1pt \phi (x) d x 
	= \frac{2 \log \big(\frac{1}{4} + t_f^2 \big)}{\log R} \widehat{\phi}(0) + O \hskip -1pt \left(  \frac{1}{\log R}\right) \hskip -1pt.
	\end{align*}
For $\fp | \fq$, we have  $|a_f(\fp^{\varnu})|= \RN(\fp)^{- \frac 1 2{\varnu} }$ by \eqref{1eq: Hecke rel, 2} and \eqref{1eq: lambda p, p|q}.	
For $\fp \hskip -1pt \nmid \hskip -1pt \fq,$ in view of \eqref{Ramanujan}, we have $|\alphaup_f (\fp)|, |\betaup_f (\fp)| \le \RN(\fp)^{\frac{7}{64}}$ and hence $|a_f(\fp^{\varnu})| \le 2 \RN(\fp)^{\frac{7}{64} \varnu}$. Consequently, by trivial estimations,
\begin{equation*}
\sum_{\fp} \sum_{\varnu = 3}^{\infty} \widehat{\phi} \left( \frac{\varnu \log \RN(\fp)}{\log R}\right) \frac{a_f(\fp^{\varnu}) \log \RN(\fp)}{\RN(\fp)^{\varnu/2} \log R} \Lt \frac{1}{\log R},
\end{equation*} 
and
\begin{align*}
\sum_{\fp | \fq } \sum_{\varnu =1, \shskip 2}  \widehat{\phi} \left( \frac{\varnu \log \RN(\fp)}{\log R}\right) \frac{a_f(\fp^{\varnu}) \log \RN(\fp)}{\RN(\fp)^{\varnu/2} \log R}
\Lt \frac{\log \log 3 \RN(\fq)}{\log R}.
\end{align*}
Moreover,  the Landau Prime Ideal Theorem for $F$  implies
\begin{equation*}
  \sum_{\fp \shskip \nmid \fq} \widehat{\phi} \left( \frac{2 \log \RN(\fp)}{\log R} \right) \frac{2 \log \RN(\fp)}{\RN(\fp) \log R}
\ = \  \frac{1}{2} \phi (0) + O \left(  \frac{1}{\log R}\right).
\end{equation*}
Finally, \eqref{2eq: D1 = ... -2P1-2P2} follows from combining the computations above along with the relations $\alphaup_f (\fp) + \betaup_f (\fp)= \lambdaup_f(\fp)$ and $ \alphaup_f^2(\fp) + \betaup_f^2(\fp)= \lambdaup_f(\fp^2) -1$.
\end{proof}

%The  sum $P_{1}(f , \phi, R)$ is quite small compared to  $P_{2}(f , \phi, R)$

\section{Proof of the theorems}

\subsection{Proof of Theorem \ref{thm: level-aspect}} 

In view of Lemma \ref{lem: explicit formula} and Corollary \ref{cor: Delta (n) =}, we have
\begin{align*}
\SD_1 (H^{\star} (\fq), \phi; h) \hskip -0.5pt = \hskip -0.5pt \widehat {\phi} (0) \hskip -0.5pt + \hskip -0.5pt \frac 1 2 \phi (0) \hskip -0.5pt - \hskip -0.5pt (\SP_1 \hskip -0.5pt +  \hskip -0.5pt \SP_2) (H^{\star} (\fq), \phi; h) \hskip -0.5pt +  \hskip -0.5pt O \hskip -1pt \left(  \frac{ \log \log 3 \RN(\fq) }{\log \RN (\fq)} \right) \hskip -1pt,
\end{align*}
with 
\begin{align*}
\SP_{\varnu} (H^{\star} (\fq), \phi; h) \hskip -1pt & = \hskip -1pt \Avg_{\fq} \big(P_{\varnu}(f , \phi, \RN(\fq));   \omega^{\star}_f h (t_f) \big) \\
& = \hskip -1pt \sum_{\fp \shskip \nmid \fq}  \widehat{\phi} \left( \hskip -1pt \frac{ \varnu \log \hskip -1pt \RN(\fp)}{\log \RN (\fq)} \hskip -1pt \right) \hskip -1pt \frac{ 2 \log \RN(\fp)}{\RN(\fp)^{\varnu/ 2} \hskip -1pt \log \hskip -1pt \RN (\fq)}   \frac {  \widetilde{\mathrm{KB}}_{\fq} (  \fp^{\varnu}  ; h) + O \lp 1/ \varphi(\fq) \rp \hskip -1pt  } { \Delta_{\fq} (1; h) } .
\end{align*}
Also recall from \eqref{3eq: Weyl law, 1} that $ \Delta_{\fq} (1; h) \asymp \varphi (\fq) $.

Suppose that $\widehat \phi$ is supported on $[-\varv, \varv]$. We only consider the Kloosterman--Bessel contribution in the case $ \varnu = 1$. The other cases follow entirely analogously (in fact, we obtain better bounds). 
 By \eqref{3eq: bound for BKq},
\begin{align*}
  \sum_{\fp \shskip \nmid \fq}  \widehat{\phi} \left( \frac{  \log \RN(\fp)}{\log \RN (\fq)} \right)  \frac{ 2 \log \RN(\fp)}{\RN(\fp)^{\frac 1 2} \log \RN (\fq)}     \widetilde{\mathrm{KB}}_{\fq} (  \fp  ; h) & \Lt  \frac 1 {\RN (\fq)^{\frac 1 2 - \sepsilon}} \mathop{\sum_{\fp \shskip \nmid \fq}}_{\RN(\fp) \leqslant \RN (\fq)^{\varv} } \frac{  1 }{\RN(\fp)^{\sepsilon} } \\
  & \Lt   {\RN (\fq)^{\varv - \frac 1 2 + \sepsilon } }.
\end{align*}
Therefore this sum is $o (\varphi (\fq))$ as long as $\varv < \frac 3 2$, so that the Kloosterman--Bessel term in $\SP_{1} (H^{\star} (\fq), \phi; h)$ would have no
contribution to the main term of $ \SD_1 (H^{\star} (\fq), \phi; h) $.

\subsection{Proof of Theorem \ref{thm: t-aspect, 1}} \label{sec: Proof of Theorem 1.4}
By Lemma \ref{lem: explicit formula} and Corollary \ref{cor: Delta (n1, n2) =}, we have
\begin{align*}
\lim_{T \ra \infty} \SD_1 (H^{\star} (1), \phi; h_{T, \shskip  M})   =    \widehat {\phi} (0)   +   \frac 1 2 \phi (0)   -   \lim_{T \ra \infty} (\SP_1 + \SP_2) (H^{\star} (1), \phi ; h_{T, \shskip  M})   \hskip -1pt,
\end{align*}
with 
\begin{align*}
\SP_{\varnu} (H^{\star} & (1),   \phi; h_{T, \shskip  M})  = \Avg_{(1)} \big( P_{\varnu} (f , \phi, T^4); \omega^{\star}_f h_{T, \shskip  M} (t_f) \big) \\
& = \hskip -1pt \sum_{\fp  }  \widehat{\phi} \left( \hskip -1pt \frac{ \varnu \log \hskip -1pt \RN(\fp)}{4 \log T} \hskip -1pt \right) \hskip -1pt \frac{   \log \RN(\fp)}{2 \RN(\fp)^{\varnu/ 2} \hskip -1pt \log \hskip -1pt T}   \frac {   {\mathrm{KB}}_{(1)} (  \fp^{\varnu}, 1 ; h_{T, \shskip  M}) + O \lp M \log^2 T \rp \hskip -1pt  } { \Delta_{(1)} (1; h_{T, \shskip  M}) }. 
\end{align*}
For the term $\widehat{\phi} (0)$, we have used  here 
\begin{align*}
\Avg_{(1)} \big(  \log \big( \tfrac 1 4 + t_f^2 \big); \omega^{\star}_f h_{T, \shskip  M} (t_f) \big) \sim  \log T^2,
\end{align*}
as 
\begin{align*}
\int_{-\infty}^{\infty}  h_{T, \shskip  M} (t) \log \big( \tfrac 1 4 + t^2 \big) t^2 dt \sim 2 M T^2 \log T^2 \int_{-\infty}^{\infty} h (t) d t . 
\end{align*}

 Suppose that $\widehat \phi$ is supported on $[-\varv, \varv]$ for $\varv < 1$. Since $T^{4\varv} \Lt T^4$, the estimate \eqref{3eq: bound for BKTM} in   Lemma \ref{lem: bounds for KB} yields
\begin{align*}
 \sum_{\fp }   \widehat{\phi} \hskip -1pt \left( \hskip -1pt \frac{  \log \RN(\fp)}{4 \log T} \hskip -1pt \right) \hskip -1pt \frac{   \log \RN(\fp) \mathrm{KB}_{(1)} (\fp; h_{T, \shskip  M}) }{2 \RN(\fp)^{\frac 1 2} \hskip -1pt \log T}  
  \Lt \hskip -1pt \frac {M} {T } \hskip -1pt \sum_{ \RN (\fp) \leqslant T^{4 \varv} } \frac  {   \log \RN(\fp)} {\RN(\fp)^{\frac 1 4 - \sepsilon}}   \Lt \hskip -1pt M T^{3 \varv - 1 + \sepsilon }, 
\end{align*}
which is $o (MT^2)$   as desired. Recall here that the total mass $ \Delta_{(1)} (1; h_{T, \shskip  M}) $ is on the order of $MT^2$.  

\begin{rem}\label{rem: failure [-1, 1]}
When we extend the support of $\widehat{\phi}$ beyond the segment $[-1, 1]$, new  terms in  $ \SB_1  (\phi; h_{T, \shskip  M})$ are expected to contribute to the asymptotics. 
For this, we need to consider
\begin{align}\label{5eq: remark, sum}
  \frac {4} { \pi^2 |d_F|  } \mathop{ \sum_{\frd | (c)}}_{\RN (c) \Lt \shskip T^{2\varv - 2} } \frac {1} {\RN (c)} \sum_{ p } S (p, 1; c)   \frac{   \log \RN(p)}{  \sqrt{\RN(p)}  \hskip -1pt \log \hskip -1pt T}  H^{  \ssstyle \sharp }_{T, \shskip  M} \lp \frac {\sqrt{p}} {c} \rp \widehat{\phi} \left( \hskip -1pt \frac{   \log \hskip -1pt \RN(p)}{4 \log T} \hskip -1pt \right) \hskip -1pt,
\end{align} 
where  the $p$-sum is over prime integers in $\frO$, and $ H^{  \ssstyle \sharp }_{T, \shskip  M} (z) $ is defined in Lemma {\rm\ref{lem: HTM(z), 2.0}}. 
Assuming	the Riemann hypothesis for   Hecke-character $L$-functions over $F$, we have
\begin{align}
\label{5eq: Sums of Kloosterman, 2}
{\mathop{\sum_{ |p| \shskip \leqslant x  }}_{0 \shskip < \shskip \arg (p)   \leqslant \shskip \theta}} \hskip - 11 pt S(  p, 1 ; c) \frac {\log \RN(p)} {\sqrt{\RN(p)}}  = \frac {\varw_F   x \shskip \theta   } { \pi  } \frac {   \mu (c\frd\-)^2 } {   \varphi (c\frd\-)} + O \big( \RN (c) (  x \RN(c))^{\sepsilon} \big). 
\end{align}
It follows  that the $p$-sum in {\rm\eqref{5eq: remark, sum}}  is equal to
\begin{align*}
 \frac {\varw_F} {  \pi \log T} \hskip -1pt \int_0^{\infty} \int_0^{2\pi}  \hskip -1pt \left\{    { \hskip -1pt  x \shskip \theta   }   \frac {   \mu (c\frd\-)^2 } {   \varphi (c\frd\-)} + O \big( \RN (c) (  x \RN(c))^{\sepsilon} \big) \hskip -1pt \right\} & \\
\cdot \frac {\partial^2} {\partial \theta \partial x} H_{T, \shskip  M}^{\ssstyle \sharp}   \bigg( \hskip -1pt \frac {\hskip -1pt \sqrt {x e^{i\theta}}} {c} \bigg)   \widehat{\phi} \hskip -1pt \left( \hskip -1pt \frac{  \log x }{2 \log T} \hskip -1pt \right) d \theta d x   & .
\end{align*}
The main term of this integral will eventually contribute the desired
\begin{align*}
- \frac { 8 H_{T, \shskip  M} } {   \pi^2 \hskip -1pt \sqrt{|d_F|} }    \lp \int_{- \infty}^{\infty}  {\phi} (x) \frac {\sin 2\pi x} {2\pi x} d x - \frac 1 2  {\phi} (0) \hskip -1pt \rp.
\end{align*}
The error term is bounded by
\begin{align*}
\RN (c) (\RN(c) T)^{\sepsilon} \int_0^{T^{\varv} / \hskip -1pt \sqrt {\RN(c)}  }   \int_0^{2\pi} \bigg( x  \bigg|    \frac {\partial^2 H_{T, \shskip  M}^{  \ssstyle \sharp}   (  x e^{i\theta} )} {\partial x \partial \theta }    \bigg| + \bigg| \frac {\partial H_{T, \shskip  M}^{  \ssstyle \sharp}   (  x e^{i\theta} )} {\partial \theta}  \bigg|\bigg)   d \theta {d x} .
\end{align*}
However, the estimates {\rm\eqref{3eq: H-natural (z), H-flat (z)}} in Remark {\rm\ref{rem: HTM}} for these derivatives could only be used to bound the error-term contribution by $ T^{2\varv + 2 + \sepsilon} / M $. Unfortunately, this is too large if $\varv > 1$. 
\end{rem} 

\subsection{Proof of Theorem \ref{thm: t-aspect, 2}}

Since the sign of functional equation of $L (s, f)$ for $f$ in the family $  H^{\star} (1)$ is always $\epsilon_f = 1$, %(as we have chosen $\Gamma_0 (1) = \GL_2 (\frO)$ instead of $\SL_2 (\frO)$), 
the vanishing order of $L(s,f)$  is {\it even} at $s= \frac 1 2$. Thus by definition (see Definition \ref{def: 1-level, 2-level density}), we have
\begin{align}
D_2(f, \phi_1, \phi_2, R) = D_1(f, \phi_1, R) D_1 (f, \phi_2, R) -2 D_1 (f, \phi_1 \phi_2, R).
\end{align}
Before applying  Lemma \ref{lem: explicit formula} to 
\begin{align}\label{4eq: D natural}
\SD_2^{\scriptscriptstyle \natural} (H^{\star} (1), \phi_1, \phi_2; h_{T, \shskip  M}) = \Avg_{(1)} \big( D_1(f, \phi_1, T^4) D_1 (f, \phi_2, T^4) ; \omega^{\star}_f h_{T, \shskip  M} (t_f) \big) ,
\end{align}
we note (see \S \ref{sec: Proof of Theorem 1.4}) that  
\begin{align*}
\Avg_{(1)} \big(P_{\varnu} (f, \phi_{\mu}, T^4); \omega_f^{\star} h_{T, M}(t_f) \big)= o(1), \quad \varnu, \mu = 1, 2 ,
\end{align*}
for the support of $\widehat{\phi}_{\mu}$ contained in $(-1,1).$
We now apply  Lemma \ref{lem: explicit formula} and Corollary \ref{cor: Delta (n1, n2) =} to \eqref{4eq: D natural}. Some simple calculations show that  
\begin{align*}
  \lim_{T \ra \infty} \SD_2^{\scriptscriptstyle \natural} (H^{\star} (1) ,   \phi_1 , \phi_2 ; h_{T, \shskip  M}) =  &  \lp \widehat{\phi}_1(0) + \tfrac{1}{2} \phi_1(0) \rp \lp \widehat{\phi}_2 (0) + \tfrac{1}{2} \phi_2(0) \rp \\
& + \lim_{T \ra \infty} \big(\SP_1^{\shskip \mathrm{diag}}  + \SP_2^{\shskip \mathrm{diag}}  \big) (H^{\star} (1), \phi_1, \phi_2  ) \\
& + \lim_{T \ra \infty} \big(  \SP_{11}^{\shskip \mathrm{off}}  + \cdots + \SP_{22}^{\shskip \mathrm{off}} \big) (H^{\star} (1), \phi_1, \phi_2 ; h_{T, \shskip  M}) ,
\end{align*}
where 
\begin{align*}
\SP_\varnu^{\shskip \mathrm{diag}} (H^{\star} (1), \phi_1, \phi_2) =   \sum_{\fp}  \widehat{\phi}_1 \left( \frac{\varnu\log \RN(\fp)}{4\log T} \right)  \widehat{\phi}_2 \left( \frac{\varnu\log \RN(\fp)}{4\log T} \right) 
\frac{\log^2 \RN(\fp)}{4 \RN(\fp)^\varnu \log^2 T},
\end{align*}
and
\begin{align*}
\SP_{\varnu_1 \varnu_2}^{\shskip \mathrm{off}} (H^{\star} (1), & \, \phi_1 , \phi_2   ; h_{T, \shskip  M}) =   \mathop{\sum \sum}_{\fp_1, \, \fp_2}  \widehat{\phi}_1 \left( \frac{\varnu_1\log \RN(\fp_1)}{4\log T} \right)  \widehat{\phi}_2 \left( \frac{\varnu_2\log \RN(\fp_2)}{4\log T} \right) \\
&
\cdot \frac{\log  \RN(\fp_1) \log  \RN(\fp_2)}{4 \RN(\fp_1 )^{\varnu_1/2} \RN(\fp_2 )^{\varnu_2/2} \log^2 T} \frac {   {\mathrm{KB}}_{(1)} (  \fp_1^{\varnu_1}, \fp_2^{\varnu_2} ; h_{T, \shskip  M}) + O \lp M \log^2 T \rp \hskip -1pt  } { \Delta_{(1)} (1, 1; h_{T, \shskip  M}) }. 
\end{align*}
Suppose that the supports  of $\widehat{\phi}_{1}$ and $\widehat{\phi}_2$ are both contained in $\left(-\frac 1 2, \frac 1 2\right)$. Similar to the estimation in \S \ref{sec: Proof of Theorem 1.4}, one may prove that the off-diagonal $\SP_{\varnu_1 \varnu_2}^{\shskip \mathrm{off}}$ are all $o (1)$.  For the diagonal  $\SP_{\varnu }^{\shskip \mathrm{diag}}$,  the Prime Ideal Theorem for $F$ implies that
\begin{align*}
(\SP_1^{\shskip \mathrm{diag}} + \SP_2^{\shskip \mathrm{diag}}) (H^{\star} (1), \phi_1, \phi_2) \sim 2 	\int_{-\infty}^{\infty}  |y|  \widehat{\phi}_1(y)  \widehat{\phi}_2 (y) d y .
\end{align*}
Finally, by Theorem \ref{thm: t-aspect, 1},
\begin{align*}
\Avg_{(1)} \big(  D_1 (f, \phi_1 \phi_2, T^4);  \omega_f^{\star} h_{T, M}(t_f) \big) = \widehat{\phi_1 \phi}_2 (0) + \frac{1}{2} \phi_1(0) \phi_2(0).
\end{align*}
The proof is complete by combining the forgoing results.

\appendix 

\section{Low-lying zeros of $L$-functions for $\SL_2 (\BZ)$-Maass forms}\label{sec: app real}

In this appendix, we give a simple proof of (a variant of) the result in \cite{Alpoge-Miller-1} for the class of weight functions $h_{T, \shskip  M} (t)$ as in Definition \ref{def: test functions}. Moreover, we verify the Katz--Sarnak heuristic for even and odd Hecke--Maass forms.  Our analysis of the Bessel integrals is essentially due to Xiaoqing Li \cite{XLi2011}.

Let $ H^{\star} (1)  $ be the collection of (orthogonal) primitive Maass--Hecke cusp forms for $\SL_2 (\BZ)$. We shall also consider separately the subsets $H^{\ssstyle +} (1)$ and $H^{\ssstyle -} (1)$ of even and odd forms, respectively.  For each $f \in H^{\star} (1)  $, let  $    \frac 1 4 +  t_f^2$ be the Laplacian eigenvalue of $f $, let 
\begin{align}
\omega_f = \frac{1}{\cosh (\pi t_f)  \|f\|^2 } 
\end{align}
be the spectral weight of $f$ ($\|f\|$ is the $L^2$-norm of $f$ with respect to the Petersson inner product), and let $\lambdaup_f (n)$ be the Hecke eigenvalues of $f$. % Define 
%\begin{align*}
%\Avg  (A; w) = \frac{\sum_{f \in H^{\star} (1)} w (f) A(f )  }{\sum_{f \in H^{\star} (1)}   w (f) } .
%\end{align*}
For  $h_{T, \shskip  M} (t) $ as in Definition {\rm\ref{def: test functions}}, define the averaged $1$-level density (see Definition \ref{def: 1-level, 2-level density}):
\begin{align}
\SD_1 (H^{\sigmaup} (1), \phi; h_{T, \shskip  M}) = \frac{\sum_{f \in H^{\sigmaup} (1)} \omega_f h_{T, \shskip  M} (t_f) D_1 (f , \phi, T^2)  }{\sum_{f \in H^{\sigmaup} (1)}   \omega_f h_{T, \shskip  M} (t_f) }, % =  \Avg  \big( D (f , \phi, T^2); \omega_f h_{T, \shskip  M} (t_f) \big). 
\end{align}
where $\sigmaup = \star, +, -$.

The following theorem is a variant of \cite[Theorem 1.3]{Alpoge-Miller-1}.

\begin{thm}\label{thm: t-aspect, 1, Z}
	Let $T, M > 1$ be such that $M = T^{\mu}$ with $0 < \mu < 1$. Fix $h   \in \mathscr{H}^{\ssstyle +} (S, N)$ and define $h_{T, \shskip  M} $ by {\rm\eqref{0eq: defn of hTM}} in Definition {\rm\ref{def: test functions}}. Let  $\phi $ be an even Schwartz function  with the support of $\widehat{\phi}$ in $ (-1-\mu, 1+\mu )$. Assume the Riemann hypothesis {\rm(}for the Riemann zeta function{\rm)}. Then 
	\begin{align*}
	\lim_{T \ra \infty} 	\SD_1 (H^{\star} (1), \phi ; h_{T, \shskip  M}) = \int_{-\infty}^{\infty} \phi (x) W_1  (\mathrm{O}) (x) d x. 
	\end{align*}
\end{thm}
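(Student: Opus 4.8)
The plan is to adapt the level-aspect/$t_f$-aspect arguments developed in Sections 4--5 to the $\SL_2(\BZ)$ setting. First I would recall the $\GL_2$-over-$\BQ$ Kuznetsov trace formula and the standard Iwaniec--Luo--Sarnak decomposition that expresses $\sum_{f \in H^{\star}(1)} \omega_f h_{T, M}(t_f) \lambdaup_f(n)$ in terms of a diagonal term (carrying $\delta_{n, 1}$), a Kloosterman--Bessel term, and an Eisenstein contribution that is negligible after using the lower bound $|\zeta(1+2it)| \gg 1/\log(|t|+3)$. The real Bessel integral here is $H_{T, M}^{\ssstyle +}(x) = \int_{-\infty}^{\infty} h_{T, M}(t)\, \phi_{it}^{+}(x)\, t\tanh(\pi t)\, dt$ (with the appropriate $\SL_2$ Bessel kernel), and the crucial input--which I would borrow from the analysis of Xiaoqing Li \cite{XLi2011}, as signalled in the introduction and to be carried out in Appendix \S\ref{sec: app H+(x)}--is that $H_{T, M}^{\ssstyle +}(x)$ is negligibly small for $1 < x \Lt T M^{1-\sepsilon}$. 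This is the real analogue of Corollary \ref{cor: HTM(z), 2} but with the enlarged range $T M^{1-\sepsilon}$ rather than merely $T$, and it is precisely the extra factor $M^{1-\sepsilon}$ that will buy the support range $(-1-\mu, 1+\mu)$ instead of $(-1,1)$.

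Next I would insert this bound, together with the trivial estimate $H_{T, M}^{\ssstyle +}(x) \Lt M x^2 / T$ for $x \leqslant 1$ (the analogue of Lemma \ref{lem: HTM(z), 1}) and Weil's bound for the classical Kloosterman sums, into the Kloosterman--Bessel term. The weight $R = T^2$ in the definition of $\SD_1(H^{\star}(1), \phi; h_{T, M})$ means that a prime $p$ with $\widehat\phi(\log p / \log R) \neq 0$ satisfies $p \Lt T^{2(1+\mu)} = (T M)^2 \cdot T^{\vepsilon}$, so that $\sqrt p / c \Lt T M$ when $c$ is bounded; after the dyadic decomposition in $c$, the argument $\sqrt{p}/c$ stays in the negligibility range $(1, T M^{1-\sepsilon})$ except on a set that contributes $o(1)$ to the density after division by the total mass $\asymp M T^2$. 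The $\varnu = 2$ terms and the ramified-prime terms are smaller and handled as in Lemma \ref{lem: explicit formula}. The explicit formula (the $\SL_2(\BZ)$ version, with gamma factor $\Gamma_{\BR}(s+it_f)\Gamma_{\BR}(s-it_f)$ rather than $\Gamma(s\pm it_f)$, which changes the archimedean constant but not the $\widehat\phi(0) + \frac12 \phi(0)$ shape) then yields $\lim_{T\to\infty} \SD_1 = \widehat\phi(0) + \frac12\phi(0) = \int \phi(x) W_1(\mathrm{O})(x)\, dx$, using $\widehat{W}_1(\mathrm{O})(y) = \delta_0(y) + \frac12$ valid since $\mathrm{supp}\,\widehat\phi \subset (-1-\mu, 1+\mu)$ still gives the clean identity.

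The Riemann hypothesis for $\zeta$ enters only to control the Landau-type prime-sum $\sum_p \widehat\phi(2\log p/\log R)\, 2\log p / (p \log R)$ that produces the $\frac12\phi(0)$ term with an error $o(1)$ uniformly, and (if one wishes to push the support to the very edge) to get a power-saving in the prime ideal theorem; under RH the error in $\psi(x) = x + O(x^{1/2+\vepsilon})$ suffices. I would state the prime-number estimate as a lemma and cite \cite{ILS-LLZ} for its use.

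The main obstacle is the Bessel-integral estimate itself: proving that $H_{T, M}^{\ssstyle +}(x)$ is negligible on the full range $1 < x \Lt T M^{1-\sepsilon}$. Unlike the complex case, where Lemma \ref{lem: HTM(z), 2.0} reduces everything to a clean oscillatory integral in $(r,\omega)$, here one must work with the classical integral representation of the $\SL_2$ Bessel kernel (the combination $J_{2it}$ and $J_{-2it}$, or equivalently $K_{2it}$) and perform a stationary-phase analysis in the $t$-variable after substituting $t = Mt' + T$; the phase has a stationary point only when $x$ is comparable to $T$ times a bounded quantity, and one needs the non-stationary-phase bound (Lemma \ref{lem: staionary phase, dim 1, 2}) with carefully chosen parameters $Q, U, R$ depending on the size of $x$ relative to $T$ and $TM$. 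This is exactly the analysis of \cite{XLi2011} repackaged, and the bookkeeping--especially tracking the dependence on $M = T^{\mu}$ so that the negligibility range is $T M^{1-\sepsilon}$ and not something smaller--is where all the real work lies; everything downstream is a routine adaptation of \S\S4--5.
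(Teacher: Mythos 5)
Your analysis of the Bessel integral $H_{T,M}^{\ssstyle +}(x)$ is essentially correct and matches the paper's: Lemmas \ref{lem: H(z), 1, real} and \ref{lem: HTM(z), 2, real} establish exactly the negligibility on $1 < x \Lt TM^{1-\sepsilon}$ via stationary phase after the substitution $t \mapsto Mt+T$, and the consequent bound $\mathrm{KB}^{\ssstyle +}(n; h_{T,M}) \Lt Mn^{1/4+\sepsilon}/T^2$ for $n \Lt T^2 M^{2-\sepsilon}$ is what permits support out to $(-1-\mu, 1+\mu)$ for the Kloosterman-Bessel contribution. Your instinct that ``the bookkeeping\dots\ is where all the real work lies'' for the Kloosterman side is right.

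However, you have the role of the Riemann hypothesis exactly backwards, and this is not a minor point --- it is precisely the correction to Alpoge--Miller that motivates the paper's inclusion of this appendix. You claim the Eisenstein contribution is ``negligible after using the lower bound $|\zeta(1+2it)| \gg 1/\log(|t|+3)$.'' But that trivial bound gives only $\Xi(p; h_{T,M}) = O(M\log^2 T)$, and inserting this into the explicit-formula sum
\begin{align*}
\SE_1(\phi; h_{T,M}) = \sum_p \widehat\phi\Big(\frac{\log p}{2\log T}\Big)\frac{\log p}{\sqrt p\,\log T}\,\Xi(p; h_{T,M})
\end{align*}
produces $O(MT^{\varv}\log T)$, which is \emph{not} $o(MT)$ once $\varv > 1$ --- i.e.\ exactly in the regime where the theorem is asserting something beyond $(-1,1)$. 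This is why the paper introduces the bound \eqref{app: Xi (p)}, namely $\Xi(p; h_{T,M}) = O_{\delta,\sepsilon}(MT^{\sepsilon}/p^{\delta})$ for any $\delta < 1/4$, proved by shifting the contour in $\int h_{T,M}(t)p^{it}/\zeta(1+2it)\zeta(1-2it)\,dt$ to $\Im t = \delta$ and using the RH bound $1/\zeta(\sigma+it) = O((|t|+3)^{\sepsilon})$ for $\sigma > 1/2$. This gives $\SE_1 \Lt MT^{(1-2\delta)\varv + \sepsilon}$, which is $o(MT)$ upon choosing $1 - 1/\varv < 2\delta < 1/2$. Meanwhile, the diagonal prime sum $\sum_p \widehat\phi(2\log p/\log R)\, 2\log p/(p\log R)$ that you say needs RH is absolutely convergent and yields $\frac12\phi(0) + O(1/\log R)$ by the unconditional Prime Number Theorem; RH plays no role there. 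So the key lemma your proposal is missing is the contour-shift estimate for $\Xi(p; h_{T,M})$, and without it the Eisenstein contribution dominates for $\varv > 1$.
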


\begin{rem}
%	The Riemann hypothesis is assumed for the deduction of a nontrivial bound for the Eisenstein contribution as in {\rm \eqref{app: Xi (p)}}{\rm;} the trivial bound is insufficient if the support of $\widehat{\phi}$ is extended beyond $[-1, 1]$. On the other hand, 

When considering  the Eisenstein terms, the trivial bound is insufficient if the support of $\widehat{\phi}$ is extended beyond $[-1, 1]$. However, Alpoge and Miller {\rm\cite{Alpoge-Miller-1}} claim with no proof in their  {\rm (29)} a bound that seems too strong to be true and it leads to an estimate  $  T \log T$ for the total Eisenstein contribution in their {\rm(71)} {\rm(}saving a whole $T$ over the main term{\rm)}. An easy way to rectify this is to assume the Riemann hypothesis and use it to deduce a nontrivial bound for the Eisenstein terms as in {\rm \eqref{app: Xi (p)}} {\rm(}half as strong as their claimed {\rm (29)}{\rm)}. It seems to require some efforts to work without the Riemann hypothesis. 
\end{rem}

For $f \in H^{\ssstyle \pm} (1)$, its Fourier coefficients satisfy 
\begin{align}
\label{app: rho = lambda}
\rho_f (\pm n) = \pm   \lambdaup_f (  n), \quad \quad n > 0,
\end{align}  and $\epsilon_f = \pm 1$. 

A new result towards the Katz--Sarnak conjecture for the family $H^{\ssstyle \pm} (1)$ is as follows. 

\begin{thm}\label{thm: t-aspect, pm, Z}
	Let $\mu$,  $T$, $M$, $h_{T, \shskip  M}$ and $\phi$ be as in Theorem {\rm\ref{thm: t-aspect, 1, Z}}.  Assume the Riemann hypothesis for Dirichlet $L$-functions {\rm(}including the Riemann  zeta function{\rm)}. When $\mu > \frac 1 3$, we have
	\begin{align*}
	\lim_{T \ra \infty} 	\SD_1 (H^{\ssstyle +} (1), \phi ; h_{T, \shskip  M}) & = \int_{-\infty}^{\infty} \phi (x) W_1 (\mathrm{O(even)}) (x) d x, \\
	\lim_{T \ra \infty} 	\SD_1 (H^{\ssstyle -} (1), \phi ; h_{T, \shskip  M}) & = \int_{-\infty}^{\infty} \phi (x) W_1 (\mathrm{O(odd)}) (x) d x .
	\end{align*}
\end{thm}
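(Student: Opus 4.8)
The plan is to run the machinery behind Theorem~\ref{thm: t-aspect, 1, Z}, with an added parity projection. First I would insert the $\SL_2(\BZ)$-version of the explicit formula (Lemma~\ref{lem: explicit formula}, now with conductor $\tfrac14+t_f^2\asymp T^2$, so $\log R=\log T^2$) into $\SD_1(H^{\ssstyle +}(1),\phi;h_{T, \shskip M})$ and $\SD_1(H^{\ssstyle -}(1),\phi;h_{T, \shskip M})$, and write every sum over $H^{\ssstyle \pm}(1)$ as $\tfrac12\sum_{f\in H^{\star}(1)}(1\pm\epsilon_f)(\cdots)$. Using the parity relation $\rho_f(-n)=\epsilon_f\rho_f(n)$ from \eqref{app: rho = lambda} together with $\epsilon_f^2=1$, this splits each average into an \emph{untwisted} part -- literally the quantity treated in Theorem~\ref{thm: t-aspect, 1, Z}, hence converging to $\int\phi\,W_1(\mathrm{O})$ -- plus an \emph{$\epsilon_f$-twisted} part, which after reducing to newforms by the $\SL_2(\BZ)$ Kuznetsov formula in its \emph{opposite-sign} version becomes a sum of Kloosterman sums $S(-n,1;c)/c$ weighted by the Bessel integral $H^{\ssstyle -}_{T, \shskip M}$ of Appendix~\ref{sec: app H-(x)} rather than $H^{\ssstyle +}_{T, \shskip M}$. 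Since $H^{\ssstyle -}_{T, \shskip M}(x)$ is negligible for $x=O(1)$ and essentially supported on $|x-T/2\pi|\Lt M^{1+\sepsilon}$, the $\epsilon_f$-twisted total mass $\sum_f\epsilon_f\omega_f h_{T, \shskip M}(t_f)$ (whose Kloosterman terms involve $H^{\ssstyle -}_{T, \shskip M}$ at arguments $\asymp 1/c$) is negligible, so $\sum_{f\in H^{\ssstyle \pm}(1)}\omega_f h_{T, \shskip M}(t_f)\sim\tfrac12\sum_{f\in H^{\star}(1)}\omega_f h_{T, \shskip M}(t_f)\asymp MT^2$ (the $\SL_2(\BZ)$-analogue of \eqref{3eq: Weyl law, 2}), and the untwisted numerator still yields $\int\phi\,W_1(\mathrm{O})$.

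Next I would evaluate the $\epsilon_f$-twisted versions of the prime sums $P_1,P_2$. Opening $S(-p^{\nu},1;c)$ and invoking the Riemann hypothesis for Dirichlet $L$-functions, the inner sums $\sum_p S(-p,1;c)\tfrac{\log p}{\sqrt p}$ admit an asymptotic evaluation of the shape of \eqref{5eq: Sums of Kloosterman, 2}: a main term proportional to $\mu(c)^2/\varphi(c)$ plus a power-saving error. In the regime where $H^{\ssstyle -}_{T, \shskip M}(\sqrt p/c)$ is not negligible one has $p\asymp c^2T^2$ and $c\Lt M$, which keeps the argument of $\widehat{\phi}$ inside $(-1-\mu,1+\mu)$. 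Feeding the main term into partial summation in $p$ and summing over $c$ against $H^{\ssstyle -}_{T, \shskip M}$ -- the one-variable counterpart of the computation sketched in Remark~\ref{rem: failure [-1, 1]}, with $H^{\ssstyle -}_{T, \shskip M}$ in place of $H^{\ssstyle \sharp}_{T, \shskip M}$ and using $\sum_{c\le x}\mu(c)^2/\varphi(c)\sim\log x$ -- the $\epsilon_f$-twisted $P_1$-contribution, after dividing by the total mass $\asymp MT^2$, works out to $\mp\tfrac12\int_{|y|>1}\widehat{\phi}(y)\,dy$ for $H^{\ssstyle \pm}(1)$, while the $P_2$-contribution is of lower order; the continuous-spectrum contribution, which enters only the even projection, is controlled via the Riemann hypothesis for $\zeta(s)$ exactly as in Theorem~\ref{thm: t-aspect, 1, Z}. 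Since $\widehat{W}_1(\mathrm{O(even)})(y)=\delta_0(y)+\tfrac12\eta(y)$ and $\widehat{W}_1(\mathrm{O(odd)})(y)=\delta_0(y)-\tfrac12\eta(y)+1$, whereas $\widehat{W}_1(\mathrm{O})(y)=\delta_0(y)+\tfrac12$, these corrections are exactly $\pm\tfrac12(\eta(y)-1)$ on the Fourier side -- vanishing for $|y|<1$ and equal to $\mp\tfrac12$ for $|y|>1$ -- so adding them to $\int\phi\,W_1(\mathrm{O})$ produces $\int\phi\,W_1(\mathrm{O(even)})$ for the even family and $\int\phi\,W_1(\mathrm{O(odd)})$ for the odd family.

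The main obstacle, and the source of the restriction $\mu>\tfrac13$, is controlling the error once $H^{\ssstyle -}_{T, \shskip M}$ enters. Unlike $H^{\ssstyle +}_{T, \shskip M}$, which is negligibly small throughout the relevant range (the real analogue of Corollary~\ref{cor: HTM(z), 2}; see Appendix~\ref{sec: app H+(x)}), $H^{\ssstyle -}_{T, \shskip M}$ genuinely contributes a main term: it is a rapidly oscillating bump of width $\asymp M$ and $L^{\infty}$-size a power of $T$ concentrated near $x=T/2\pi$, so both its supremum and its total variation are large. The power-saving error $O\bigl(c\,(cp)^{\sepsilon}\bigr)$ from each Kloosterman--prime sum, summed over $c\Lt M$ and integrated against the derivatives of $H^{\ssstyle -}_{T, \shskip M}$ over the range $p\asymp c^2T^2$, has to be beaten by the main term of size $\asymp MT^2$; tracking the exponents against the $L^{\infty}$- and variation-bounds for $H^{\ssstyle -}_{T, \shskip M}$ from Appendix~\ref{sec: app H-(x)} (whose analysis follows Xiaoqing Li \cite{XLi2011}) shows this happens precisely when $M=T^{\mu}$ with $\mu>\tfrac13$. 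A secondary, purely bookkeeping task is to record the $\SL_2(\BZ)$-analogues of \eqref{3eq: Weyl law, 2} and of Corollaries~\ref{cor: Delta (n) =}--\ref{cor: Delta (n1, n2) =}, and of the opposite-sign Kuznetsov reduction to newforms from \cite[\S2]{ILS-LLZ}; the full details are carried out in Appendix~\ref{sec: even and odd}.
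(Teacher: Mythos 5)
Your proposal follows essentially the same route as the paper's Appendix \S\ref{sec: even and odd}. Writing the projection onto $H^{\ssstyle \pm}(1)$ as $\tfrac12\sum_{f\in H^{\star}(1)}(1\pm\epsilon_f)(\cdots)$ is the same thing as using the parity-resolved Kuznetsov formula \eqref{app: Kuznetsov, +-}: the untwisted half recovers the $\mathrm{O}$-density of Theorem~\ref{thm: t-aspect, 1, Z}, and the $\epsilon_f$-twisted half reduces exactly to the opposite-sign Kloosterman--Bessel terms $\mathrm{KB}^{\ssstyle -}$ built from $H^{\ssstyle -}_{T, \shskip M}$. You also correctly identify the mechanism that makes $\mathrm{KB}^{\ssstyle -}$ a genuine main term (concentration of $H^{\ssstyle -}_{T, \shskip M}$ near $T/2\pi$ forcing $p\asymp c^2T^2$, $c\Lt M$), the role of RH for Dirichlet $L$-functions via an ILS-type asymptotic for $\sum_p S(p,-1;c)\log p/\sqrt{p}$, and the final corrections $\mp\tfrac12\int_{|y|>1}\widehat\phi(y)\,dy$ matching $\widehat{W}_1(\mathrm{SO(even/odd)})-\widehat{W}_1(\mathrm{O})$.

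Two small inaccuracies worth flagging. First, the total mass in the $\SL_2(\BZ)$ case is $\asymp MT$ (not $MT^2$); the conductor here is $\asymp T^2$ so the paper normalizes with $R=T^2$, unlike the $T^4$ of the imaginary quadratic setting. Second, the restriction $\mu>\tfrac13$ does not really come from pitting the power-saving error $O\big(c(cp)^{\sepsilon}\big)$ directly against the main term; it enters at an earlier stage, in Lemma~\ref{lem: H-(x), 2.2}, to secure the \emph{localization} of $H^{\ssstyle - \, \ssstyle \sharp}_{T,\shskip M}(x)$ on $|T-2\pi x|\leqslant M^{1+\sepsilon}$: the stationary-phase derivative $T/\pi-2x\cosh r = T/\pi - 2x + O(xM^{\sepsilon}/M^2)$ has an error of size $O(TM^{\sepsilon}/M^2)$ at $x\asymp T$, and one needs $T<M^3$ (i.e.\ $\mu>\tfrac13$) so this is dominated by the intended lower bound $M^{1+\sepsilon}$. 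Only after this localization is established do the error estimates in the proof of Lemma~\ref{lem: main term from Kloosterman} come out as $o(MT)$. Your description is in the right spirit but attributes the threshold to the wrong step.
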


\subsection{The Kuznetsov trace formula for $\SL_2 (\BZ)$}
Let  $h (t) \in \mathscr{H} (S, N) $ be as in Definition \ref{def: test functions}. For $n  > 0$, define
\begin{align}
\Delta^{\star}_{\ssstyle \pm} (n; h) = \sum_{f \in H^{\star} (1)} \omega_f h(t_f) \rho_f (n )  { \rho_f (\pm 1) } ,
\end{align}
\begin{align}
\Xi   (n ; h) =  \frac 1 { \pi} \int_{-\infty}^{\infty} \hskip -2pt  \omega  (t) h (   t ) 
\eta \hskip -1pt \lp  n , \hskip -1pt \tfrac 1 2 + i t \rp    \hskip -1pt  d t,
\end{align}
with
\begin{align}
\omega (t) = \frac {1 } {|\zeta (1 + 2it)|^2} , \quad \eta (n, s) =  \sum_{ \sstyle a | n  }    (n/a^2)^{ s - \frac 1 2}. 
\end{align}
The Kuznetsov trace formula for $\SL_2 (\BZ)$ is as follows (cf. \cite[\S 3]{CI-Cubic} or \cite{Kuznetsov}):
\begin{equation}\label{app: Kuznetsov}
\begin{split}
\Delta^{\star}_{\ssstyle \pm} (n ; h)  + \Xi   (n ; h) =  2 \delta_{   n , \, \scalebox{0.7}{$\pm$}  1} \shskip H  + 2 \mathrm{KB}^{\ssstyle \pm}  (n ; h) ,
\end{split}
\end{equation}
where
\begin{align}\label{app: KB +-}
\mathrm{KB}^{\ssstyle \pm} (n ; h) =  \sum_{  c = 1 }^{\infty}  \frac {S (  n, \pm 1 ; c)} { c}  H^{\ssstyle \pm} \bigg( \hskip -1pt   \frac {\textstyle \sqrt { n } } { c} \bigg),
\end{align} 
\begin{align}\label{app: integral H}
H   = \frac {1} {2 \pi^2} \int_{-\infty}^{\infty} h(t)    {  \tanh (\pi t) t \shskip dt }    , 
\end{align}
\begin{align} \label{app: H + (x)}
\quad \quad & H^{\ssstyle +} (x) \hskip -0.5 pt = \hskip -0.5 pt \frac { i} {\pi} \int_{-\infty}^{\infty} \hskip -1 pt h(t) J_{2it} (4\pi x) \frac {t \shskip dt } {\cosh (\pi t)},   \\
\label{app: H - (x)}
&  H^{\ssstyle -} (x) \hskip -0.5 pt = \hskip -0.5 pt \frac 2 {\pi^2  }   \int_{-\infty}^{\infty} \hskip -1 pt h(t) K_{2it} (4\pi x)   {\sinh (\pi t)} t \shskip dt , 
\end{align}
$J_{\varnu} (x)$ and $K_{\varnu} (x)$ are Bessel functions, $\delta_{ n , \, \scalebox{0.7}{$\pm$} 1} $ is the Kronecker $\delta$-symbol,  and $S (n,  \pm 1 ; c)$ is the Kloosterman sum.

By \eqref{app: rho = lambda}, one may restrict the spectral sum in \eqref{app: Kuznetsov} to even or odd forms in $H^{\ssstyle \pm} (1)$.  Precisely, for $n > 0$,  define
\begin{align}
\Delta^{\ssstyle \pm} (n; h) = \sum_{f \in H^{\scalebox{0.5}{$\pm$}} (1)} \omega_f h(t_f) \lambdaup_f (n ) ,
\end{align}
then
\begin{equation}\label{app: Kuznetsov, +-}
\begin{split}
\Delta^{\ssstyle \pm}  (n ; h)  + \delta_{\ssstyle \pm, \, +} \shskip \Xi   (n ; h) =    \delta_{   n , \shskip 1} \shskip H  +  \mathrm{KB}^{\ssstyle +}  (n ; h) \pm \mathrm{KB}^{\ssstyle -}  (n ; h) .
\end{split}
\end{equation}
%where $\sigmaup = +$, $-$.

\subsection{Analysis of the Bessel integral $H^{\ssstyle +}_{T, \shskip  M} (x)$}\label{sec: app H+(x)}
Let $  h_{T, \shskip  M} (t) \in \mathscr{H}^{\ssstyle +} (S, N)$ be as in Definition \ref{def: test functions}. Let    $H^{\ssstyle +}_{T, \shskip  M} (x)$  be the Bessel integral defined in \eqref{app: H + (x)} for $h_{T, \shskip  M} (t)$.   Set $M = T^{\mu}$ for  $0 < \mu < 1$.	 

\begin{lem}\label{lem: H(z), 1, real}
	We have  $ H^{\ssstyle +}_{T, \shskip  M} (x) \Lt_{\, h  }  M   x / T^{2  } $ for $x \leqslant 1$.
\end{lem}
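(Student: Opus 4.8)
The plan is to study
\[
H^{\ssstyle +}_{T, \shskip M} (x) = \frac{i}{\pi} \int_{-\infty}^{\infty} h_{T, \shskip M} (t)\, J_{2it} (4\pi x) \, \frac{t \, dt}{\cosh (\pi t)}
\]
from \eqref{app: H + (x)} by shifting the contour of integration downward, exploiting that $h_{T, \shskip M}$ is holomorphic and decays like $e^{-\pi |\Re (t)|}$ on horizontal strips by \eqref{0eq: bounds for h(t)} and \eqref{0eq: defn of hTM}. Since $S > \frac 3 2$ in Definition \ref{def: test functions}, I would fix $\sigma_0$ with $\frac 3 2 < \sigma_0 < \min\{S, \frac 5 2\}$, so that the line $\Im (t) = -\sigma_0$ avoids all poles of $1/\cosh (\pi t)$, which lie at $t = i (n + \frac 1 2)$, $n \in \BZ$. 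Moving the contour from $\Im (t) = 0$ down to $\Im (t) = -\sigma_0$ is legitimate because the side contributions at $\Re (t) = \pm R$ tend to $0$ as $R \to \infty$, again by the decay of $h_{T, \shskip M}$; in doing so one crosses exactly the two poles at $t = -\tfrac i 2$ and $t = -\tfrac{3i}{2}$.

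I would first check that the two residue terms are negligible. At $t = -\tfrac i2$ the Bessel factor is $J_1 (4\pi x) \Lt x$ for $x \leqslant 1$, while $h_{T, \shskip M}(-\tfrac i2) = h(-T/M - i/(2M)) + h(T/M - i/(2M)) \Lt e^{-\pi T/M}$ by \eqref{0eq: bounds for h(t)}; similarly at $t = -\tfrac{3i}{2}$ one gets $J_3 (4\pi x) \Lt x^3$ and $h_{T, \shskip M}(-\tfrac{3i}{2}) \Lt e^{-\pi T/M}$. Hence both residues contribute $\Lt e^{-\pi T/M} x$, which is negligibly small since $M = T^{\mu}$ with $\mu < 1$. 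It then remains to bound
\[
\frac{i}{\pi}\int_{-\infty}^{\infty} h_{T, \shskip M}(t - i\sigma_0)\, J_{2\sigma_0 + 2it}(4\pi x) \, \frac{(t - i\sigma_0)\, dt}{\cosh(\pi(t - i\sigma_0))} .
\]
For this I would use Poisson's integral representation \eqref{2eq: Poisson, J}, which — because $4\pi x$ is real and $\leqslant 4\pi$ — gives $|J_{\nu}(4\pi x)| \Lt x^{\Re \nu}/|\Gamma(\nu + \tfrac 12)|$ for $\Re \nu > -\tfrac 12$; together with Stirling's formula this yields $|J_{2\sigma_0 + 2it}(4\pi x)| \Lt x^{2\sigma_0} (|t|+1)^{-2\sigma_0} e^{\pi|t|}$. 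Combining this with $1/|\cosh(\pi(t - i\sigma_0))| \Lt e^{-\pi|t|}$ and $|t - i\sigma_0| \Lt |t| + 1$, the integrand is $\Lt |h_{T, \shskip M}(t - i\sigma_0)|\, x^{2\sigma_0} (|t|+1)^{1 - 2\sigma_0}$.

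Since $\sigma_0 > \tfrac 32$, the power $(|t|+1)^{1 - 2\sigma_0}$ is bounded and integrable, and by \eqref{0eq: bounds for h(t)} and \eqref{0eq: defn of hTM} one has $h_{T, \shskip M}(t - i\sigma_0) \Lt e^{-\pi|t-T|/M}(|t-T|/M + 1)^{-N} + e^{-\pi|t+T|/M}(|t+T|/M + 1)^{-N}$, which localizes the $t$-integral to $|t| \asymp T$ (the ranges $|t| \leqslant T/2$ and $|t| \geqslant 2T$ contributing $O_A(T^{-A})$), where $(|t|+1)^{1 - 2\sigma_0} \asymp T^{1 - 2\sigma_0}$; since $\int_{-\infty}^{\infty} |h_{T, \shskip M}(t - i\sigma_0)|\, dt \Lt M$, the shifted integral is $\Lt M x^{2\sigma_0} T^{1 - 2\sigma_0}$. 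Finally, for $0 < x \leqslant 1$ and $\sigma_0 > \tfrac 32$ one has $x^{2\sigma_0} \leqslant x$ and $T^{1 - 2\sigma_0} \leqslant T^{-2}$, so $H^{\ssstyle +}_{T, \shskip M}(x) \Lt_{\, h} M x / T^2$, as claimed. The one point requiring care is that a direct estimate is useless here: for real $t$ the size of $J_{2it}(4\pi x)$ is governed by $1/|\Gamma(1 + 2it)|$ and does not shrink with $x$, so one genuinely has to push the contour beyond $\Im(t) = -\tfrac 32$ — which is possible precisely because the strip of holomorphy of the weight functions has half-width $S > \tfrac 32$. (Shifting only to $\Im(t) = -\tfrac 34$, which suffices in the complex case for $|z| \leqslant 1$ thanks to the two Bessel factors present there, would give merely $M x^{3/2}/T^{1/2}$.)
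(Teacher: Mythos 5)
Your proof is correct and follows essentially the same route as the paper's: shift the contour to $\Im(t)=-\sigma_0$ with $3<2\sigma_0<\min\{5,2S\}$ (the paper writes $\delta=\sigma_0$), bound the two crossed residues at $t=-\tfrac i2,-\tfrac{3i}{2}$ by $e^{-\pi T/M}x$ via $J_1(4\pi x)\Lt x$, $J_3(4\pi x)\Lt x^3$, and estimate the shifted integral by Poisson's integral representation plus Stirling to get $\Lt M x^{2\sigma_0}T^{1-2\sigma_0}\leqslant Mx/T^2$. The closing remark correctly explains why, unlike in the complex case of Lemma~\ref{lem: HTM(z), 1} where two Bessel factors allow stopping at $\Im(t)=-\tfrac34$, the single real Bessel factor forces one to exploit the full strip $|\sigma|<S$ with $S>\tfrac32$.
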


\begin{proof}
	Let $ 3 < 2\delta < \min \left\{ 5, 2 S \right\} $.		By contour shift,  
	\begin{align*}
	\pi i	H^{\ssstyle +}_{T, \shskip  M}    (x)   =   \, &    h_{T, \shskip  M}  \hskip -1pt \lp - \tfrac 1 2 i \rp J_{1} (4 \pi x) - 3   h_{T, \shskip  M} \hskip -1pt \lp - \tfrac 3 2 i \rp J_{3} (4 \pi x) \\
	& - \int_{-\infty}^{\infty} h_{T, \shskip  M}  \hskip -1pt \lp   t - \delta i   \rp     {J_{  2it + 2 \delta } \lp 4 \pi  x \rp  }   \frac {   t - \delta i    } {\cos  (\pi  ( i t +  \delta   )  )} d t.
	\end{align*}
	By   Poisson's integral representation for $J_{\varnu} (x)$ (see \eqref{2eq: Poisson, J}), along with Stirling's formula,
	we infer that  
	\begin{align*}%\label{1eq: J for |z|<1}
	J_{1} (4 \pi x)  \Lt x , \hskip 10 pt J_{3} (4 \pi x)  \Lt x^3, \quad 
	\frac {J_{  2it + 2 \delta } \lp 4 \pi  x \rp  } {\cos  (\pi  ( i t +  \delta   )  ) }   \Lt    \lp \frac { x  } { |t| + 1   } \rp^{2 \delta}    , 
	\end{align*} 
	for $ x \leqslant 1$. 
	Consequently  \eqref{0eq: bounds for h(t)} and \eqref{0eq: defn of hTM} yield 
	\begin{align*}%\label{5eq: H(z) for z small}
	H^{\ssstyle +}_{T, \shskip  M} (z) \Lt e^{- T / M} (x + x^3) +   \frac { M   x^{2 \delta} } {T^{2 \delta -1}} \Lt  \frac { M   x  } {T^{2  }}  ,
	\end{align*} 
	for $x \leqslant 1$.
\end{proof}

\begin{lem}\label{lem: HTM(z), 2, real}
	When  $1 < x \Lt T  M^{1-\sepsilon} $, we have $H^{\ssstyle +}_{T, \shskip  M} (x) = O_{A} (T^{- A})$ for any $A > 0$.   
\end{lem}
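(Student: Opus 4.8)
The plan is to feed a uniform asymptotic expansion of $J_{2it}(4\pi x)$ into the defining integral \eqref{app: H + (x)} for $h_{T, \shskip M}$, cancel the factor $e^{\pi|t|}$ produced by the Bessel function against $1/\cosh(\pi t)$, and then apply the stationary phase bound of Lemma \ref{lem: staionary phase, dim 1, 2} in the variable $t$. The whole point is that the resulting phase has $t$-derivative bounded away from $0$ on the essential support $\{|t|\asymp T\}$ of $h_{T, \shskip M}$ \emph{precisely because} $x\Lt T M^{1-\sepsilon}$; this lemma is simply the real-variable shadow of Lemma \ref{lem: HTM(z), 2}\,(1).

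First I would record the Bessel input. Since the order $2it$ is purely imaginary, the coefficient $1-(2it)^2/z^2=1+4t^2/z^2$ in the Bessel equation is positive for all $z>0$, so $J_{2it}(4\pi x)$ has \emph{no turning point} on the positive real axis and the Debye/WKB uniform expansion applies with no Airy transition: uniformly in $x>0$ and for $|t|$ bounded below,
\[
J_{2it}(4\pi x)=\frac{c\,e^{\pi|t|}}{(16\pi^2x^2+4t^2)^{1/4}}\,W(t,x)\,e^{i\,\mathrm{sgn}(t)\,\Psi(|t|,x)}+O\big(e^{-\pi|t|}\big),
\]
where $\Psi$ is the real WKB phase, so that $|\partial_t\Psi|=2\,\mathrm{arcsinh}(|t|/2\pi x)$, and $W$ is a symbol of order $0$ in $t$ (slowly varying on the scale $|t|\asymp T$, of size $\asymp 1$, with $\partial_t^jW\Lt_j|t|^{-j}$). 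The $O(e^{-\pi|t|})$ term, coming from the Hankel $H^{(2)}$-part, contributes to $H^{\ssstyle +}_{T, \shskip M}(x)$ something exponentially small after integration against $h_{T, \shskip M}(t)\,t/\cosh(\pi t)$ and may be discarded. Inserting this into \eqref{app: H + (x)}, using $e^{\pi|t|}/\cosh(\pi t)\asymp 1$ together with \eqref{0eq: bounds for h(t)}--\eqref{0eq: defn of hTM}, and, by the evenness of $h_{T, \shskip M}$, treating only the part with $t$ near $+T$, we are reduced after a smooth truncation with negligible error to estimating
\[
\int_{T-CM^{1+\sepsilon}}^{\,T+CM^{1+\sepsilon}} w(t)\,e^{i\Psi(t,x)}\,dt,\qquad w(t)=h_{T, \shskip M}(t)\,\frac{c'\,t\,W(t,x)}{(16\pi^2x^2+4t^2)^{1/4}},
\]
where $w$ has size $X\asymp T/\max(x,T)^{1/2}$ and satisfies $w^{(j)}\Lt_j X/M^{j}$.

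Now I would apply Lemma \ref{lem: staionary phase, dim 1, 2}. On the range of integration $|t|\asymp T$, hence $|\partial_t\Psi(t,x)|=2\,\mathrm{arcsinh}(|t|/2\pi x)\asymp\min(1,T/x)\Gt T^{-\mu(1-\sepsilon)}$, the last step using $x\Lt TM^{1-\sepsilon}=T^{1+\mu(1-\sepsilon)}$; so I take $R\asymp\min(1,T/x)$. One computes $\partial_t^2\Psi\asymp 1/\max(x,T)$ (and $\partial_t^3\Psi\asymp T/\max(x,T)^3$, etc.), consistent with $Q\asymp\max(x,T)$, $Y\asymp\max(x,T)$; finally $U\asymp M$ and $b-a\asymp M^{1+\sepsilon}$. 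Lemma \ref{lem: staionary phase, dim 1, 2} then gives, for every $A\geqslant 0$,
\[
H^{\ssstyle +}_{T, \shskip M}(x)\Lt_A \frac{M^{1+\sepsilon}\,T}{\max(x,T)^{1/2}}\left(\frac{1}{R^2\max(x,T)}+\frac{1}{R\,\max(x,T)}+\frac{1}{RM}\right)^{\!A},
\]
and each term in the parenthesis is $\Lt T^{-c}$ for a fixed $c=c(\mu,\sepsilon)>0$ throughout $1<x\Lt TM^{1-\sepsilon}$: indeed $R^{-1}\max(x,T)^{-1}\asymp T^{-1}$, $R^{-2}\max(x,T)^{-1}\Lt\max(T^{-1},\,xT^{-2})\Lt T^{\mu(1-\sepsilon)-1}$, and $R^{-1}M^{-1}\Lt\max(M^{-1},\,x/TM)\Lt\max(T^{-\mu},M^{-\sepsilon})$. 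Since $M^{1+\sepsilon}T/\max(x,T)^{1/2}\Lt T^{2}$, taking $A$ large enough yields $H^{\ssstyle +}_{T, \shskip M}(x)=O_A(T^{-A})$.

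The main obstacle is purely bookkeeping: one must keep the parameters $Q,U,X,Y,R$ of Lemma \ref{lem: staionary phase, dim 1, 2} straight across the two regimes $1<x\Lt T$ (where $R\asymp 1$ and the bound is essentially trivial) and $T\Lt x\Lt TM^{1-\sepsilon}$ (where $R\asymp T/x$ is small but still a controlled negative power of $T$ — which is exactly why the support condition $x\Lt TM^{1-\sepsilon}$ is imposed). Note that, in contrast to the real-order case, there is \emph{no} genuine turning-point/Airy region to worry about here, and in particular no delicate behaviour near $x\asymp T$; one only uses the clean WKB expansion. If one wishes to avoid citing the uniform Bessel asymptotic, one may instead insert the classical Schl\"afli/Sommerfeld contour representation of $J_{2it}(4\pi x)$, as in the proof of Lemma \ref{lem: HTM(z), 2.0}, and run stationary phase on the resulting double integral; but the two pieces of that representation individually grow like $e^{2\pi|t|}$ while $t/\cosh(\pi t)$ only gains $e^{-\pi|t|}$, so they must be combined before integrating in $t$, which in effect reconstructs the asymptotic above and makes the present route the cleaner one.
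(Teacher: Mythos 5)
Your argument is correct, but it takes a genuinely different route from the paper. The paper starts from Watson's Mehler--Sonine representation (\cite[6.21\,(12)]{Watson}) for $J_{2it}(4\pi x)-J_{-2it}(4\pi x)$, in which the $\sinh(\pi t)$ factor is already extracted and combines with the $1/\cosh(\pi t)$ in \eqref{app: H + (x)} to give the bounded $\tanh(\pi t)$ --- so there is no $e^{2\pi|t|}$ blow-up as you suggest at the end; that objection applies to a raw Sommerfeld contour, not to the formula the paper actually uses. The paper then substitutes $t\mapsto Mt+T$, integrates in $t$ first so that the Schwartz decay of $\widehat{k}$ localizes $r$ to $|r|\Lt M^{\sepsilon}/M$, and finishes with one application of Lemma~\ref{lem: staionary phase, dim 1, 2} in the \emph{contour variable} $r$, with phase $Tr/\pi\pm 2x\cosh r$ whose $r$-derivative is $\Gt T$ on the truncated range precisely because $x\Lt TM^{1-\sepsilon}$. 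You instead insert a uniform Balogh/Dunster-type WKB asymptotic for $J_{2it}(4\pi x)$ (turning-point-free since the order is purely imaginary, as you correctly observe), cancel $e^{\pi|t|}$ against $1/\cosh(\pi t)$, and do stationary phase directly in $t$ with $|\partial_t\Psi|=2\,\mathrm{arcsinh}(|t|/2\pi x)\asymp\min(1,T/x)\Gt T^{-\mu(1-\sepsilon)}$; your bookkeeping of $Q,U,X,Y,R$ across the two regimes $x\Lt T$ and $T\Lt x\Lt TM^{1-\sepsilon}$ is sound, and your phase-derivative formula is exactly what one obtains by differentiating the stationary value of the paper's contour phase with respect to $t$. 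Both are legitimate. Your version is more transparent and exhibits the lemma cleanly as the real shadow of Lemma~\ref{lem: HTM(z), 2}\,(1), at the price of invoking a uniform Bessel asymptotic with controlled $t$-derivatives for the amplitude $W(t,x)$ --- this is the one step you should reference precisely (e.g.\ \cite{Balogh-K,Dunster-Bessel,Olver}), since the conclusion is used to kill arbitrarily many powers of $T$ and one needs all orders of the symbol estimate. The paper's route is more elementary and self-contained: a classical integral formula, Fourier decay of a Schwartz function, and the same stationary-phase lemma, with no appeal to uniform Bessel asymptotics; that is why the paper prefers it (and indeed the same strategy is what generalizes to the complex Bessel integral $H_{T,M}(z)$, where a clean one-variable WKB input is not available).
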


\begin{proof} 
	First of all, by \cite[6.21 (12)]{Watson}, we may derive that
	\begin{align}
	{J_{2it } (4 \pi x) - J_{- 2it } (4 \pi x)}   = \frac 2 {\pi i} \sinh (\pi t)  \int_{-\infty}^{\infty} e (t r/\pi) \cos (4 \pi x \cosh r )  d r . 
	\end{align}
	This is a real analogue of \eqref{1eq: integral representation, 2}, but the integral here is {\it not} absolutely convergent. 
	When  $x      > 1$, it follows from partial integration  that only a negligibly small error will be lost if  the integral above is truncated at $|r| = T^{\sepsilon}$. Therefore, up to a negligible error, $H^{\ssstyle +}_{T, \shskip  M}   (  x   )$ is equal to
	\begin{align*}
	\frac 2 {\pi^2 } \int_{-\infty}^{\infty}  
	\int_{- T^{\ssepsilon}}^{T^{\ssepsilon}} \hskip -1 pt t \tanh (\pi t) h ((t-T)/M)   e (  t r/\pi) 
	\cos (4 \pi x \cosh r )      \shskip d r \shskip d\shskip t.
	\end{align*}
	By changing the variable from $t $ to $M t + T$, this integral turns into
	\begin{align*}
	\frac {2 M T} {\pi^2}   \int_{-\infty}^{\infty}    
	\int_{- T^{\ssepsilon}}^{T^{\ssepsilon}}   k (t) e ({   (Mt   +   T) r}/\pi)    \cos (4 \pi x \cosh r )  d r \shskip d\shskip t,
	\end{align*}
	with 
	\begin{align}\label{app: defn of k}
	k (t) = (Mt / T + 1)   \tanh (\pi (Mt   +   T)   ) h (t).  
	\end{align}
	By \eqref{0eq: bounds for h(t)}, it is easy to verify that
	\begin{align*}
	k (t+ i \sigma)  \Lt e^{- \pi |t|} (|t|+1)^{1-N},
	\end{align*}
	and hence $k (t)$ is a Schwartz function by Cauchy's integral formula. 
	After changing the order of integration, we see that the integral above is equal to
	\begin{align*}
	\frac {2MT} {\pi^2 }  
	\int_{- T^{\ssepsilon}}^{T^{\ssepsilon}} \widehat k \lp -   {   M r} / {\pi} \rp  e( Tr / \pi)   \cos (4 \pi x \cosh r ) d r. 
	\end{align*}
	Since $\widehat k (r)  $ is Schwartz, this integral may be effectively truncated at $|r| = M^{\sepsilon} / M$, and we need to consider  
	\begin{equation}\label{app: H+natural}
	H^{\ssstyle + \, \ssstyle \sharp }_{T, \shskip  M} (x) = \frac {2MT} {\pi^2 }  
	\int_{- M^{\ssepsilon} / M}^{M^{\ssepsilon}/ M} \widehat k \lp -   {   M r} / {\pi} \rp  e( Tr / \pi)   \cos (4 \pi x \cosh r ) d r. 
	\end{equation}
	We now apply Lemma \ref{lem: staionary phase, dim 1, 2} with the phase function
	$f_{T, \shskip \scalebox{0.7}{$\pm$} } (r) = T r/\pi \pm 2 x \cosh r. $
	Observe that $$\big|f_{T, \shskip \scalebox{0.7}{$\pm$} }' (r)\big| = |T / \pi \pm 2 x \sinh r|  \geqslant T/\pi - 2 x/ M^{1-\sepsilon} \Gt T $$ when $x \Lt T M^{1-\sepsilon}$. Lemma \ref{lem: staionary phase, dim 1, 2} implies that the integral is negligibly small for $x \Lt T M^{1-\sepsilon}$ (for applying Lemma \ref{lem: staionary phase, dim 1, 2} it would be more rigorous if we had used a smooth truncation at $|r| = M^{\sepsilon} / M$).
\end{proof}

Applying Lemma \ref{lem: H(z), 1, real}  and \ref{lem: HTM(z), 2, real}, along with the Weil bound for Kloosterman sums $S (  n, 1; c) \Lt c^{\frac 1 2 + \sepsilon} $, the Kloosterman--Bessel term $\mathrm{KB}^{\ssstyle +} (n, h_{T, \shskip  M})$ as in \eqref{app: KB +-}
may be estimated as follows,
\begin{align}\label{app: bound for KB}
\mathrm{KB}^{\ssstyle +} (n, h_{T, \shskip  M}) \Lt \frac {M} {T^{2  }} \sum_{ c \shskip \geqslant \sqrt n  } \frac {c^{\frac 1 2 + \sepsilon}} {c} \frac {n^{\frac 1 2 }} {c} + \frac 1 {T^A} \Lt \frac {M n^{\frac 1 4 + \sepsilon}} {T^{2   }} ,
\end{align}
for any ${n} \leqslant T^2 M^{2-\sepsilon}$. Moreover, in view of the lower bound $ |  \zeta  (1+2it) | \Gt 1/ \log (|t|+3)$ (cf. \cite[(3.11.10)]{Titchmarsh-Riemann}), the Eisenstein contribution $\Xi   (n ; h_{T, \shskip  M}) $ has the following bound,
\begin{align}\label{app: Eisenstein Xi}
\Xi   (n ; h_{T, \shskip  M}) = \frac 1 {   \pi} \int_{-\infty}^{\infty} \hskip -2pt   \frac { h_{T, \shskip  M} (   t ) 
	\eta \hskip -1pt \lp  n ,   \tfrac 1 2 + i t \rp } {|\zeta (1+2it)|^2}  d t = O (M \log^2 T) , \quad n = 1,   p^2.
\end{align} 
The trivial bound $O (M \log^2 T)$ above also holds for $ \Xi   (p ; h_{T, \shskip  M})  $, but this is not sufficient for our purpose. In order to get a better bound, we assume the Riemann hypothesis, under which we have
\begin{align}\label{app: Xi (p)}
\Xi   (p ; h_{T, \shskip  M}) =   O_{\delta, \shskip \sepsilon} \bigg(\frac {M T^{\sepsilon}} { p^{ \delta  }} \bigg),
\end{align} 
for any fixed $\delta < \frac 1 4$ and $\vepsilon > 0$. To prove this, we write 
\begin{align*}
\Xi   (p ; h_{T, \shskip  M}) = \frac 2 {   \pi} \int_{-\infty}^{\infty} \hskip -2pt   \frac { h_{T, \shskip  M} (t) p^{ i t}
} {\zeta (1+2it) \zeta (1-2it) }  d t ,
\end{align*}
and shift the integral contour to $\Im ( t) =  \delta < \frac 1 4$. We have uniformly
\begin{equation*}
\frac 1 {\zeta (\sigma + it)} = \left\{ \begin{split}
& O_{\delta, \shskip \sepsilon } ((|t|+3)^{\sepsilon}), & & \text{ if } \tfrac 1 2 < 1- 2 \delta \leqslant \sigma \leqslant 1, \\
& O (\log (|t|+3) ), & & \text{ if } \sigma \geqslant 1, 
\end{split} \right. 
\end{equation*}
where the first bound is valid under the Riemann hypothesis (cf. \cite[(3.11.8), (14.2.6)]{Titchmarsh-Riemann}). It follows that 
\begin{align*}
\Xi   (p ; h_{T, \shskip  M}) = \frac 2 {   \pi} \int_{-\infty}^{\infty} \hskip -2pt   \frac { h_{T, \shskip  M} (t + i \delta ) p^{ i t - \delta }
} {\zeta (1-2\delta +2it) \zeta (1+2\delta-2it) }  d t  =  O_{\delta, \shskip \sepsilon} \bigg(\frac {M T^{\sepsilon}} { p^{ \delta  }} \bigg),
\end{align*}
as desired.

As a consequence of \eqref{app: bound for KB} and \eqref{app: Eisenstein Xi} (with $n=1$), in view of \eqref{app: Kuznetsov}, the total mass of the average 
\begin{align}
\Delta^{\star}_{\ssstyle +} (1; h_{T, \shskip  M}) = \sum_{f \in H^{\star} (1)}   \omega_f    h_{T, \shskip  M}  ( t_f ) \sim \frac {2 M T } {\pi^2} \int_{-\infty}^{\infty} h (t) d t \asymp M T . 
\end{align}

\subsection{Proof of Theorem \ref{thm: t-aspect, 1, Z}} By standard arguments  as in  \cite{Alpoge-Miller-1} using the explicit formula and the Kuznetsov trace formula \eqref{app: Kuznetsov}, one is reduced to bounding certain sums involving the Eisenstein terms $\Xi$ and the Kloosterman--Bessel terms $\mathrm{KB}^{\ssstyle +}$ by   quantities which are of order strictly lower than $M T$. Two typical sums are
\begin{align*}
\SE_1 (\phi; h_{T, \shskip  M}) = \sum_{ p }   \widehat{\phi} \hskip -1pt \left( \hskip -1pt \frac{  \log p }{2 \log T} \hskip -1pt \right) \hskip -1pt \frac{   \log p  }{ \sqrt p    \log T} \Xi  (p; h_{T, \shskip  M}),
\end{align*}
and
\begin{align*}
\SB^{\shskip \ssstyle +}_1 (\phi; h_{T, \shskip  M}) = \sum_{ p }   \widehat{\phi} \hskip -1pt \left( \hskip -1pt \frac{  \log p }{2 \log T} \hskip -1pt \right) \hskip -1pt \frac{   \log p  }{ \sqrt p    \log T} \mathrm{KB}^{\ssstyle +}  (p; h_{T, \shskip  M}).
\end{align*}
Suppose that $\widehat \phi$ is supported on $ [-\varv, \varv] $ for $\varv < 1 + \mu$. 

For the first sum $\SE_1 (\phi; h_{T, \shskip  M})$, it follows from \eqref{app: Xi (p)} that
\begin{align*}
\sum_{ p }   \widehat{\phi} \hskip -1pt \left( \hskip -1pt \frac{  \log p }{2 \log T} \hskip -1pt \right) \hskip -1pt \frac{   \log p  }{ \sqrt p    \log T} \Xi  (p; h_{T, \shskip  M}) \Lt M T^{\sepsilon} \sum_{ p \shskip \leqslant T^{2 \varv} } \frac  {   \log p} { p^{\frac 1 2+ \delta }} \Lt M T^{ \lp 1 - 2 \delta \rp \varv + \sepsilon} ,
\end{align*}
and $M T^{ \lp 1 - 2 \delta \rp \varv + \sepsilon} = o (MT)$ if we choose $\delta$ so that $1 - 1/\varv < 2 \delta < \frac 1 2$. 

%\begin{rem}\label{rem: error in AM}
%	The analysis of the Eisenstein contribution is omitted by Alpoge and Miller {\rm\cite{Alpoge-Miller-1}}, while their {\rm (29)} seems to suggest {\rm(}in our setting{\rm)} the following strong but erroneous bound
%	\begin{align*}
%	\Xi   (p ; h_{T, \shskip  M}) = O   \bigg(\frac {M \log T} { \sqrt {p}} \bigg). 
%	\end{align*}
%\end{rem}

As for the second sum $\SB^{\shskip \ssstyle +}_1 (\phi; h_{T, \shskip  M})$, note that $T^{2\varv} \leqslant T^2 M^{2-\sepsilon}$. Hence \eqref{app: bound for KB} yields
\begin{align*}
\sum_{ p }   \widehat{\phi} \hskip -1pt \left( \hskip -1pt \frac{  \log p }{2 \log T} \hskip -1pt \right) \hskip -1pt \frac{   \log p  }{ \sqrt p   \log T} \mathrm{KB}^{\ssstyle +}  (p; h_{T, \shskip  M}) \Lt \frac {M} {T^2} \sum_{ p \shskip \leqslant T^{2 \varv} } \frac  {   \log p} { p^{\frac 1 4 - \sepsilon}} \Lt M T^{\frac 3 2 \varv - 2 + \sepsilon },
\end{align*}
which is $o (MT) $ as desired.

\subsection{Analysis of the Bessel integral $H^{\ssstyle -}_{T, \shskip  M} (x)$}\label{sec: app H-(x)}
Let    $H^{\ssstyle -}_{T, \shskip  M} (x)$  be the Bessel integral defined in \eqref{app: H - (x)} for $h_{T, \shskip  M} (t)$. Let $M = T^{\mu}$  with  $0 < \mu < 1$.	

\begin{lem}\label{lem: H-(x), 1}
	We have  $ H^{\ssstyle -}_{T, \shskip  M} (x) \Lt_{\, h  }  M   x / T^{2  } $ for $x \leqslant 1$.
\end{lem}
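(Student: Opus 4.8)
The plan is to reduce $H^{\ssstyle -}_{T, \shskip M}(x)$ to the same shape as the integral $H^{\ssstyle +}_{T, \shskip M}(x)$ treated in Lemma~\ref{lem: H(z), 1, real}, and then run that proof verbatim with the Bessel function $J_\nu$ replaced by the modified Bessel function $I_\nu$. The key algebraic step is the identity
\[
H^{\ssstyle -}_{T, \shskip M}(x) = \frac{i}{\pi}\int_{-\infty}^{\infty} h_{T, \shskip M}(t)\, \frac{I_{2it}(4\pi x)}{\cosh(\pi t)}\, t\, dt,
\]
which follows from $K_\nu(z) = \tfrac{\pi}{2}\bigl(I_{-\nu}(z)-I_\nu(z)\bigr)/\sin(\pi\nu)$ with $\nu = 2it$ (so that $\sin(2\pi i t) = i\sinh(2\pi t)$ and $\sinh(2\pi t) = 2\sinh(\pi t)\cosh(\pi t)$), together with the change of variables $t\mapsto -t$ in the $I_{-2it}$-term, which is legitimate because $h$ is even while the factor $t$ is odd. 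Observe that the right-hand side is precisely \eqref{app: H + (x)} with $J_{2it}$ replaced by $I_{2it}$.

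With this identity in hand I would shift the contour from $\Im(t)=0$ down to $\Im(t)=-\delta$ for a fixed $\delta$ with $3 < 2\delta < \min\{5, 2S\}$ (possible since $S>\tfrac32$; note this interval contains no odd integer, so $\cos(\pi(it+\delta))$ has no real zeros). Since $I_{2it}(4\pi x)$ is entire in $t$, the only poles crossed are the simple poles of $1/\cosh(\pi t)$ at $t=-\tfrac{i}{2}$ and $t=-\tfrac{3i}{2}$, and, exactly as in the proof of Lemma~\ref{lem: H(z), 1, real},
\[
\pi i\, H^{\ssstyle -}_{T, \shskip M}(x) = h_{T, \shskip M}\!\bigl(-\tfrac{i}{2}\bigr) I_1(4\pi x) - 3\, h_{T, \shskip M}\!\bigl(-\tfrac{3i}{2}\bigr) I_3(4\pi x) - \int_{-\infty}^{\infty} h_{T, \shskip M}(t-\delta i)\, I_{2it+2\delta}(4\pi x)\, \frac{t-\delta i}{\cos(\pi(it+\delta))}\, dt.
\]

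Next I would estimate $I_\nu(4\pi x)$ for $x\leqslant 1$ via Poisson's integral representation $I_\nu(z) = \tfrac{(z/2)^\nu}{\sqrt{\pi}\,\Gamma(\nu+\frac12)}\int_0^\pi e^{z\cos\theta}\sin^{2\nu}\theta\, d\theta$ (valid for $\Re(\nu)>-\tfrac12$), the $I$-analogue of \eqref{2eq: Poisson, J}; combined with Stirling's formula this yields $I_1(4\pi x)\Lt x$, $I_3(4\pi x)\Lt x^3$ and $I_{2it+2\delta}(4\pi x)/\cos(\pi(it+\delta)) \Lt (x/(|t|+1))^{2\delta}$ for $x\leqslant 1$, uniformly in $t\in\BR$. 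Finally I would invoke \eqref{0eq: bounds for h(t)} and \eqref{0eq: defn of hTM}: since $T/M = T^{1-\mu}\to\infty$, one has $h_{T, \shskip M}(-\tfrac{i}{2}),\, h_{T, \shskip M}(-\tfrac{3i}{2})\Lt e^{-\pi T/M}$, so the residue terms are negligibly small, while the shifted integral is $\Lt M x^{2\delta}/T^{2\delta-1}$. Because $x\leqslant 1\leqslant T$ and $2\delta>3$, both contributions are $\Lt Mx/T^2$:
\[
H^{\ssstyle -}_{T, \shskip M}(x)\Lt e^{-\pi T/M}(x+x^3) + \frac{M x^{2\delta}}{T^{2\delta-1}} \Lt \frac{M x}{T^{2}}.
\]

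The main obstacle is the first step. A contour shift applied directly to $K_{2it}(4\pi x)$ does \emph{not} work, because the shifted $K_{2it+2\delta}(4\pi x)$ grows like $x^{-2\delta}$ as $x\to 0$ (the wrong sign); it is exactly the parity cancellation $I_{-2it}(4\pi x)-I_{2it}(4\pi x) \mapsto -2 I_{2it}(4\pi x)$ that recasts $H^{\ssstyle -}_{T, \shskip M}$ as an integral of $I_{2it}(4\pi x)$, whose shifted version $I_{2it+2\delta}(4\pi x)$ decays like $x^{2\delta}$ and supplies the needed saving. After this reduction, every remaining step is a line-by-line transcription of the proof of Lemma~\ref{lem: H(z), 1, real}.
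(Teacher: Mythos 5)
Your proof is correct and follows essentially the same route as the paper: use the connection formula $2K_\nu = \pi(I_{-\nu}-I_\nu)/\sin(\pi\nu)$ together with the parity cancellation (the $\sinh(\pi t)$ from $\sin(2\pi i t)$ cancelling the $\sinh(\pi t)$ in the definition of $H^{\ssstyle -}$) to recast $H^{\ssstyle -}_{T, \shskip M}(x)$ as the $I_{2it}$-analogue of \eqref{app: H + (x)}, and then repeat the contour shift and Poisson-integral bound from Lemma~\ref{lem: H(z), 1, real}. The only cosmetic difference is that the paper finishes by quoting $I_\nu(z)=e^{-\pi i\nu/2}J_\nu(iz)$ to reduce to the already-proven $J$-estimate, whereas you bound $I_\nu$ directly via its own Poisson integral; the two are interchangeable.
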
 

\begin{proof}
	Recall the connection formula (\cite[3.7 (6)]{Watson}):
	\begin{align*}
	2 K_{\varnu} (z) = \pi \frac {
		I_{-\varnu} (z) - I_{\varnu} (z)} {\sin (\pi \varnu)} ,
	\end{align*}
	where $I_{\varnu} (z)$ is the $I$-Bessel function. So   \eqref{app: H - (x)}  may be rewritten as
	\begin{align*}
	H^{\ssstyle - }   (x) \hskip -0.5 pt = \hskip -0.5 pt  \frac i {\pi} \int_{-\infty}^{\infty} \hskip -1 pt h(t) I_{2it} (4\pi x) \frac {t \shskip dt } {\cosh (\pi t)} . 
	\end{align*}
	Note that $I_{\varnu} (z) = e^{-\frac 1 2 \pi i \varnu} J_{\varnu} (i z)$ (\cite[3.7]{Watson}). We may then follow the same line of arguments in the proof of Lemma \ref{lem: H(z), 1, real}. 
\end{proof}

\begin{lem}\label{lem: H-(x), 2}
	For $x > 1$, we have $ H^{\ssstyle -}_{T, \shskip  M} (x) = H^{\ssstyle - \, \ssstyle \sharp }_{T, \shskip  M} (x) + O_A  (T^{-A} ) $ with 
	\begin{align}\label{app: H-sharp}
	H^{\ssstyle - \, \ssstyle \sharp }_{T, \shskip  M} (x) = \frac {2MT} {\pi^2 }  
	\int_{- M^{\ssepsilon} / M}^{M^{\ssepsilon}/ M} \widehat k \lp -   {   M r} / {\pi} \rp  e( Tr / \pi)   \cos (4 \pi x \sinh r ) d r,
	\end{align}
	where $k (t)$ is a Schwartz function defined by {\rm\eqref{app: defn of k}}. 
\end{lem}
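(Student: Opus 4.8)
The plan is to run the proof of Lemma~\ref{lem: HTM(z), 2, real} essentially verbatim, the only genuinely new ingredient being the analogue for the $K$-Bessel function of the $J$-Bessel integral representation used there. First I would record the classical Mehler--Sonine-type identity $K_{2it} (4\pi x) = \frac{1}{\cosh (\pi t)} \int_0^\infty \cos (4\pi x \sinh r)\cos (2tr)\, dr$, valid for real $t$ and $x > 0$ (the case $t = 0$ being $K_0 (4\pi x) = \int_0^\infty \cos (4\pi x \sinh r)\, dr$; cf.\ \cite{Watson}), or, after extending to $r \in \BR$ using that $\cos (4\pi x \sinh r)$ is even in $r$,
\[
	K_{2it} (4\pi x) = \frac {1} {2\cosh (\pi t)} \int_{-\infty}^{\infty} \cos (4\pi x \sinh r) \, e (t r / \pi) \, dr .
\]
As with the $J$-representation of Lemma~\ref{lem: HTM(z), 2, real}, the integral over $r$ is only conditionally convergent.

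Substituting this into \eqref{app: H - (x)} with $h$ replaced by $h_{T, \shskip M}$ and using $\sinh (\pi t)/\cosh (\pi t) = \tanh (\pi t)$, one finds that $H^{\ssstyle -}_{T, \shskip M} (x)$ equals
\[
	\frac 1 {\pi^2} \int_{-\infty}^{\infty} \int_{-\infty}^{\infty} t \tanh (\pi t) \, h_{T, \shskip M} (t) \, e (t r / \pi) \cos (4\pi x \sinh r) \, dr \, dt,
\]
which is precisely the expression occurring in the proof of Lemma~\ref{lem: HTM(z), 2, real} with $\cos (4\pi x \cosh r)$ replaced by $\cos (4\pi x \sinh r)$. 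From here the argument is word for word the one there: by the evenness of $h$ together with the parity of $\sinh$ and $\tanh$ one may replace $h_{T, \shskip M} (t)$ by $2 h ((t - T)/M)$; since $x > 1$ and $(\partial/\partial r)(4\pi x \sinh r) = 4\pi x \cosh r \Gt x\, e^{|r|}$, repeated integration by parts in $r$ shows the range $|r| > T^{\ssepsilon}$ contributes $O_A (T^{-A})$; the substitution $t \mapsto M t + T$ then produces the Schwartz function $k (t) = (M t / T + 1)\tanh (\pi (M t + T)) h (t)$ of \eqref{app: defn of k} together with the constant $2 M T / \pi^2$; interchanging the two integrals introduces the factor $\widehat k (- M r / \pi)$; and since $\widehat k$ is Schwartz the $r$-integral may be truncated at $|r| = M^{\ssepsilon}/M$ with negligible error. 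The outcome is exactly $H^{\ssstyle -\, \ssstyle \sharp}_{T, \shskip M} (x)$ of \eqref{app: H-sharp}.

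The only point that warrants care --- and hence the closest thing to an obstacle here --- is legitimizing the interchange of the $t$- and $r$-integrations and the truncation $|r| > T^{\ssepsilon}$ despite the conditional convergence of the Mehler--Sonine integral; this is handled exactly as for $H^{\ssstyle +}_{T, \shskip M}$, by integrating by parts once or twice in $r$ before invoking Fubini and using that $\cosh r$ grows exponentially while the $t$-dependence of the amplitude is bounded polynomially in $T$, uniformly on the support of $h_{T, \shskip M}$. No contour shift is needed, since Lemma~\ref{lem: H-(x), 2} asserts only the identity reducing $H^{\ssstyle -}_{T, \shskip M}$ to $H^{\ssstyle -\, \ssstyle \sharp}_{T, \shskip M}$; the genuine --- and, as the introduction stresses, genuinely different --- analysis of $H^{\ssstyle -\, \ssstyle \sharp}_{T, \shskip M} (x)$ is deferred to the lemmas that follow.
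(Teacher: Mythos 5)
Your proof is correct and matches the paper's approach: the paper likewise starts from the Mehler--Sonine representation $K_{2it}(4\pi x) = \frac{1}{2\cosh(\pi t)}\int_{-\infty}^{\infty}e(tr/\pi)\cos(4\pi x\sinh r)\,dr$ (Watson, \S 6.23 (13)) and then simply invokes the same chain of manipulations that produced $H^{\ssstyle + \, \ssstyle \sharp}_{T,\shskip M}$ in the proof of Lemma \ref{lem: HTM(z), 2, real}. Your care about conditional convergence, the factor of $2$ from the two bumps of $h_{T,\shskip M}$, and the order-of-integration swap is exactly the unstated bookkeeping the paper's one-line proof relies on.
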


\begin{proof}
	Starting with the formula (\cite[6.23 (13)]{Watson}) 
	\begin{align}
	K_{2it} (4\pi x) = \frac 1 {2 \cosh (\pi t)} \int_{-\infty}^{\infty} e (t r/\pi) \cos (4 \pi x \sinh r )  d r ,
	\end{align}
	we may prove this lemma by the arguments that led us to \eqref{app: H+natural}. 
\end{proof}

Define $H^{\ssstyle - \, \ssstyle \natural }_{T, \shskip  M} (x) =   {d H^{\ssstyle - \, \ssstyle \sharp }_{T, \shskip  M} (x)} / {d x} $. By \eqref{app: H-sharp}, we have
\begin{align}\label{app: H-natural}
H^{\ssstyle - \, \ssstyle \natural }_{T, \shskip  M} (x) \hskip -0.5 pt   =   \hskip -0.5 pt - \frac {8 MT} {\pi}   \hskip -1 pt
\int_{- M^{\ssepsilon} / M}^{M^{\ssepsilon}/ M} \hskip -1 pt \widehat k \lp -   {   M r} / {\pi} \rp \sinh r \cdot  e( Tr / \pi)   \sin (4 \pi x \sinh r ) d r. 
\end{align}

\begin{lem}\label{lem: H-(x), 2.2}
	For $x > 1$, we have
	\begin{align}\label{app: bounds for H-}
	H^{\ssstyle - \, \ssstyle \sharp }_{T, \shskip  M} (x) \Lt T^{1 + \sepsilon}, \quad H^{\ssstyle - \, \ssstyle \natural }_{T, \shskip  M} (x) \Lt T^{1 + \sepsilon} / M.
	\end{align}
	Moreover, if $\mu > \frac 1 3$,  then $ H^{\ssstyle - \, \ssstyle \sharp }_{T, \shskip  M} (x), \, H^{\ssstyle - \, \ssstyle \natural }_{T, \shskip  M} (x) = O_A (T^{-A}) $ unless $ | T - 2 \pi x | \leqslant M^{1+ \sepsilon} $. 
\end{lem}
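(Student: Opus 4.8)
The plan is to open the trigonometric factor in \eqref{app: H-sharp} and \eqref{app: H-natural} into exponentials and estimate the resulting oscillatory integrals, in the spirit of the proof of Lemma \ref{lem: HTM(z), 2, real}. The two \emph{a priori} bounds $H^{\ssstyle - \, \ssstyle \sharp }_{T, \shskip  M} (x) \Lt T^{1+\sepsilon}$ and $H^{\ssstyle - \, \ssstyle \natural }_{T, \shskip  M} (x) \Lt T^{1+\sepsilon}/M$ are immediate: in \eqref{app: H-sharp} and \eqref{app: H-natural} the Fourier transform $\widehat{k}$ is Schwartz hence bounded, the range of integration has length $\Lt M^{\sepsilon}/M$, and $|\sinh r| \Lt M^{\sepsilon}/M$ on it, so estimating absolutely and multiplying by the prefactor $MT$ gives both bounds. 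The substance is the second assertion.

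Write $\cos (4\pi x \sinh r) = \tfrac{1}{2} \big( e(2x\sinh r) + e(-2x\sinh r) \big)$ and $\sin (4\pi x\sinh r) = \tfrac{1}{2i} \big( e(2x\sinh r) - e(-2x\sinh r) \big)$. Then, up to the amplitude factor $\sinh r$ present in the $\natural$-case, both $H^{\ssstyle - \, \ssstyle \sharp }_{T, \shskip  M} (x)$ and $H^{\ssstyle - \, \ssstyle \natural }_{T, \shskip  M} (x)$ split into two integrals over $|r| \leqslant M^{\sepsilon}/M$ with phases $f_{\pm}(r) = Tr/\pi \pm 2x\sinh r$, so that $f_{\pm}'(r) = T/\pi \pm 2x\cosh r$ and $f_{\pm}^{(i)}(r) \Lt x$ for $i \geqslant 2$. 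The $+$-branch is negligible with no restriction on $\mu$: since $x > 1$ and $\cosh r \geqslant 1$ we have $|f_+'(r)| \geqslant T/\pi + 2x \Gt T + x$, and Lemma \ref{lem: staionary phase, dim 1, 2} with $Y = x$, $Q = 1$, $U = 1/M$, $R \asymp T + x$ makes each of $Y/R^{2}Q^{2}$, $1/RQ$, $1/RU$ at most $\Lt M/T = o(1)$; multiplying by $MT$ gives $O_A(T^{-A})$.

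For the $-$-branch we invoke the hypothesis $|T - 2\pi x| > M^{1+\sepsilon}$. If $x > T$, then automatically $|T - 2\pi x| > M^{1+\sepsilon}$ and $|f_-'(r)| = 2x\cosh r - T/\pi \geqslant 2x - T/\pi \Gt x$, and Lemma \ref{lem: staionary phase, dim 1, 2} with $Y = x$, $Q = 1$, $R \asymp x$ again gives $O_A(T^{-A})$. If $x \leqslant T$, then on $|r| \leqslant M^{\sepsilon}/M$ we have $\cosh r = 1 + O(M^{2\sepsilon}/M^2)$, whence
\begin{align*}
|f_-'(r)| \geqslant |T/\pi - 2x| - 2x\,|\cosh r - 1| \Gt M^{1+\sepsilon} - O\big(T M^{2\sepsilon}/M^{2}\big) \Gt M^{1+\sepsilon},
\end{align*}
the last step using $\mu > \tfrac{1}{3}$ (equivalently $T/M^{2} = T^{1-2\mu} = o(M)$). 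Since $f_-''(r) = -2x\sinh r$ is not small (it can be $\asymp T M^{\sepsilon}/M$), one applies Lemma \ref{lem: staionary phase, dim 1, 2} on the refined scale $Q = M^{\sepsilon}/M$: one checks $f_-^{(i)}(r) \Lt Y/Q^{i}$ for all $i \geqslant 2$ with $Y = x M^{3\sepsilon}/M^{3}$, and then, with $R \asymp M^{1+\sepsilon}$, $U = 1/M$, and $X = 1$ (resp.\ $X = M^{\sepsilon}/M$ in the $\natural$-case),
\begin{align*}
\frac{Y}{R^{2}Q^{2}} \Lt \frac{x M^{\sepsilon}}{M^{3}} \Lt \frac{T M^{\sepsilon}}{M^{3}} = o(1), \qquad \frac{1}{RQ} \Lt M^{-\sepsilon}, \qquad \frac{1}{RU} \Lt M^{-\sepsilon},
\end{align*}
all $o(1)$ precisely because $\mu > \tfrac{1}{3}$. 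Feeding this into Lemma \ref{lem: staionary phase, dim 1, 2} with $A$ large and multiplying by the prefactor yields $H^{\ssstyle - \, \ssstyle \sharp }_{T, \shskip  M} (x), H^{\ssstyle - \, \ssstyle \natural }_{T, \shskip  M} (x) = O_A(T^{-A})$, as desired.

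The main obstacle is exactly this last point. Unlike the $\cosh$-phase situation of Lemma \ref{lem: HTM(z), 2, real}, where $f'(0) = T/\pi$ is already of size $T$, here $f_-'(0) = T/\pi - 2x$ vanishes near $2\pi x = T$: there is a genuine stationary region of width $\asymp M^{1+\sepsilon}$, so no decay can be expected there, and to obtain decay outside it one must play the large second derivative $f_-'' \asymp x M^{\sepsilon}/M$ against $|f_-'|^{2} \gg M^{2+2\sepsilon}$, which is what forces $T \Lt M^{3}$, i.e.\ $\mu > \tfrac{1}{3}$. The one computation to be done carefully is the verification that the refined parameters $Q = M^{\sepsilon}/M$, $Y = x M^{3\sepsilon}/M^{3}$ satisfy the hypotheses of Lemma \ref{lem: staionary phase, dim 1, 2} uniformly in $i \geqslant 2$; as in the proof of Lemma \ref{lem: HTM(z), 2, real}, one should also replace the sharp cut-off at $|r| = M^{\sepsilon}/M$ by a smooth one to legitimize that lemma.
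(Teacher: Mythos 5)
Your proof is correct and follows the same route as the paper's own: split the trigonometric factor into two exponentials, observe that the $+$-phase $Tr/\pi + 2x\sinh r$ has derivative $\geqslant T/\pi$ and is always harmless, and for the $-$-phase show $|f_-'(r)| = |T/\pi - 2x\cosh r|$ is $\Gt M^{1+\vepsilon}$ under the stated hypotheses (using $\mu > \tfrac{1}{3}$ to absorb the $O(x M^{\vepsilon}/M^{2})$ error from $\cosh r - 1$), then invoke Lemma~\ref{lem: staionary phase, dim 1, 2}. The one place you go beyond what the paper actually writes is the choice of parameters in the stationary-phase lemma: the paper's proof simply asserts the first-derivative lower bound and says ``apply Lemma~\ref{lem: staionary phase, dim 1, 2},'' without noting that with the naive choice $Q = 1$, $Y = x$, the first ratio $Y/R^{2}Q^{2} \asymp x/M^{2+2\vepsilon}$ is a \emph{positive} power of $T$ whenever $\tfrac{1}{3} < \mu < \tfrac{1}{2}$ (since $x$ can be as large as $\asymp T$), so the lemma gives nothing as stated. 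Your rescaling $Q = M^{\vepsilon}/M$, $Y = xM^{3\vepsilon}/M^{3}$ — equivalently a change of variable $r \mapsto r/M$ — repairs this: it correctly matches $Y/Q^{2}$ to $\sup |f_-''| \asymp xM^{\vepsilon}/M$, makes all three ratios negative powers of $T$ precisely when $\mu > \tfrac{1}{3}$, and does not disturb the $+$-branch or the $x > T$ case. This is a genuine and worthwhile refinement of the paper's terse argument rather than a deviation from it.
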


\begin{proof}
	Trivial estimation yields \eqref{app: bounds for H-}. As for the second statement, we apply Lemma \ref{lem: staionary phase, dim 1, 2}  with the phase function
	$f_{T, \shskip \scalebox{0.7}{$\pm$} } (r) = T r/\pi \pm 2 x \sinh r. $
	Observe that $$f_{T, \shskip \scalebox{0.7}{$+$} }' (r) =  T  /\pi + 2 x \cosh r > T/\pi, $$
	$$ f_{T, \shskip \scalebox{0.7}{$-$} }' (r) = T  /\pi - 2 x \cosh r =  T / \pi - 2 x  + O (xM^{\sepsilon} / M^2) ,$$
	and hence $\big| f_{T, \shskip \scalebox{0.7}{$-$} }' (r)\big| \Gt M^{1+\sepsilon} $ provided that $T < M^3$ and $ |T - 2\pi x| > M^{1+\sepsilon} $. 
\end{proof}

Similar to \eqref{app: bound for KB}, we may deduce from Lemma \ref{lem: H-(x), 1}, \ref{lem: H-(x), 2} and \ref{lem: H-(x), 2.2} that
\begin{align}\label{app: bound for KB-}
\mathrm{KB}^{\ssstyle -} (n, h_{T, \shskip  M}) \Lt \frac {M n^{\frac 1 4 + \sepsilon}} {T^{2   }}  ,
\end{align}
if $   n \Lt T^2 $. Consequently,  by \eqref{app: Kuznetsov, +-} and the estimates in \eqref{app: bound for KB}, \eqref{app: Eisenstein Xi} and \eqref{app: bound for KB-}, we infer that
\begin{align}
\Delta^{\ssstyle \pm} (1; h_{T, \shskip  M}) = \sum_{f \in H^{\scalebox{0.5}{$\pm$}} (1)} \omega_f h_{T, \shskip  M} (t_f) \sim \frac { H_{T, \shskip  M} } {2 \pi^2}  \asymp M T .
\end{align}

\subsection{Proof of Theorem \ref{thm: t-aspect, pm, Z}} \label{sec: even and odd}

In view of the Kuznetsov trace formula for $H^{\ssstyle \pm} (1)$ as in \eqref{app: Kuznetsov, +-}, we now also need to consider the contribution from the Kloosterman--Bessel terms $ \mathrm{KB}^{\ssstyle -} $. To be precise, we are left to consider 
\begin{align}\label{app: P1}
\SB^{\shskip \ssstyle -}_1 (\phi; h_{T, \shskip  M}) & =  \sum_{ p }   \widehat{\phi} \hskip -1pt \left( \hskip -1pt \frac{  \log p }{2 \log T} \hskip -1pt \right) \hskip -1pt \frac{   \log p  }{ \sqrt p    \log T} \mathrm{KB}^{\ssstyle -}  (p; h_{T, \shskip  M}), \\
\label{app: P2} \SB^{\shskip \ssstyle -}_2 (\phi; h_{T, \shskip  M}) & = \sum_{ p }   \widehat{\phi} \hskip -1pt \left( \hskip -1pt \frac{  \log p }{  \log T} \hskip -1pt \right) \hskip -1pt \frac{   \log p  }{   p    \log T} \mathrm{KB}^{\ssstyle -}  (p^2 ; h_{T, \shskip  M}) .
\end{align}

Suppose that $\widehat \phi$ is supported on $ [-\varv, \varv] $ for $\varv < 1 + \mu$. Set $P = T^{2 \varv}$. 

By \eqref{app: bound for KB-}, it is readily seen that $\SB^{\shskip \ssstyle -}_2 (\phi; h_{T, \shskip  M}) = O \big(M P^{\frac 1 4 + \sepsilon} / T^{ 2 } \big)$ and hence only contributes an error term. 

Since the support of $\widehat{\phi}$ is extended beyond the segment $[-1, 1]$, the sum $\SB^{\shskip \ssstyle -}_1 (\phi; h_{T, \shskip  M})$ would contribute to the asymptotics. For Theorem \ref{thm: t-aspect, pm, Z}, sufficient is to prove that it has the following asymptotic. 

\begin{lem}\label{lem: main term from Kloosterman}
	Assume  the Riemann hypothesis for the classical Dirichlet $L$-functions. Then, for $M = T^{\mu}$ with $\frac 1 3 < \mu < 1$, we have
	\begin{align}\label{app: P1 = W(O) - W(SO(even))}
	\SB^{\shskip \ssstyle -}_1 (\phi; h_{T, \shskip  M}) = - \frac {H_{T, \shskip  M}} {2\pi^2} \lp \int_{- \infty}^{\infty}  {\phi} (x) \frac {\sin 2\pi x} {2\pi x} d x - \frac 1 2  {\phi} (0) + O \lp \frac 1 {\log T} \rp \rp, 
	\end{align}
	where $ H_{T, \shskip  M} $ is the integral defined in {\rm\eqref{app: integral H}} for $h_{T, \shskip  M}$. 
\end{lem}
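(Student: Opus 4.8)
The plan is to insert the definition~\eqref{app: KB +-} of $\mathrm{KB}^{\ssstyle -}$ into~\eqref{app: P1}, interchange the summations so that the modulus $c$ is on the outside, and evaluate the inner sum over primes for each fixed $c$. First I would dispose of the tail $c \geqslant \sqrt p$, where the argument $\sqrt p/c \leqslant 1$: by Lemma~\ref{lem: H-(x), 1} and the Weil bound $S(p,-1;c) \Lt c^{\frac 12 + \sepsilon}$ this contributes $\Lt M T^{\frac 32 \varv - 2 + \sepsilon} = o(MT)$ since $\varv < 2$. In the remaining range $\sqrt p/c > 1$ I would replace $H^{\ssstyle -}_{T, \shskip M}$ by $H^{\ssstyle - \, \ssstyle \sharp}_{T, \shskip M}$ via Lemma~\ref{lem: H-(x), 2}, at negligible cost; then, by Lemma~\ref{lem: H-(x), 2.2}---and here the hypothesis $\mu > \frac 13$ is used, so that $T < M^3$---the factor $H^{\ssstyle - \, \ssstyle \sharp}_{T, \shskip M}(\sqrt p/c)$ is negligibly small unless $|T - 2\pi \sqrt p/c| \leqslant M^{1 + \sepsilon}$, i.e.\ $\sqrt p \asymp cT$. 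Together with $p \leqslant T^{2\varv}$ this confines the modulus to $c \Lt T^{\varv - 1}$, which is $< M$ because $\varv < 1 + \mu$.

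Fix such a $c$. I would estimate $\sum_p S(p,-1;c)\,\tfrac{\log p}{\sqrt p}\, g(p)$, where $g(p) = \widehat\phi\big(\tfrac{\log p}{2\log T}\big)\, H^{\ssstyle - \, \ssstyle \sharp}_{T, \shskip M}(\sqrt p/c)$, by partial summation from the conditional asymptotic
\begin{align*}
\sum_{p \leqslant x} S(p,-1;c)\log p = \frac{\mu(c)^2}{\varphi(c)}\, x + O_{\sepsilon}\big(c\, x^{\frac 12 + \sepsilon}\big),
\end{align*}
valid under the Riemann hypothesis for Dirichlet $L$-functions. To prove it I would expand $S(n,-1;c)$ (for $(n,c)=1$) into Dirichlet characters modulo $c$, so that $\sum_{p \leqslant x} S(p,-1;c)\log p = \sum_{\chi\,(\mathrm{mod}\ c)} \widehat S(\chi)\,\psi(x,\chi) + O\big(c^{\frac 12 + \sepsilon}\sqrt x\big)$ (the error absorbing prime powers and the primes dividing $c$); the conditional bound $\psi(x,\chi) = \delta_{\chi, \chi_0}\, x + O\big(\sqrt x \log^2 cx\big)$, the identity $\widehat S(\chi_0) = \varphi(c)^{-1}\sum_{a \in (\BZ/c\BZ)^{\times}} S(a,-1;c) = \mu(c)^2/\varphi(c)$, and the Parseval bound $\sum_{\chi} |\widehat S(\chi)|^2 = \varphi(c)^{-1}\sum_{a \in (\BZ/c\BZ)^{\times}}|S(a,-1;c)|^2 \leqslant c$ (whence $\sum_\chi |\widehat S(\chi)| \leqslant \sqrt{\varphi(c)\,c} \leqslant c$) then give the stated error. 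Partial summation converts this into a main term $\tfrac{\mu(c)^2}{\varphi(c)}\int_0^\infty g(x)\, x^{-1/2}\,dx$ plus an error $\Lt c\,T^{\sepsilon}\|g'\|_1 \Lt c\, T^{1 + \sepsilon}$, uniformly in $c$ (using $|H^{\ssstyle - \, \ssstyle \natural}_{T, \shskip M}| \Lt T^{1+\sepsilon}/M$ and that $g$ is supported on an $x$-interval of length $\asymp c^2 T M^{1+\sepsilon}$); after weighting by $1/c$, summing over $c \Lt T^{\varv-1}$, and dividing by $\log T$, this error is $O\big(T^{\varv + \sepsilon}/\log T\big) = o(MT) = o(H_{T, \shskip M})$.

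It then remains to assemble the main term. The substitutions $x = u^2$ and $u = cv$ turn $\tfrac{\mu(c)^2}{\varphi(c)}\int_0^\infty g(x)x^{-1/2}\,dx$ into $\tfrac{2c\,\mu(c)^2}{\varphi(c)}\int_0^\infty \widehat\phi\big(\tfrac{\log c + \log v}{\log T}\big)\, H^{\ssstyle - \, \ssstyle \sharp}_{T, \shskip M}(v)\,dv$, so that the main part of $\SB^{\shskip \ssstyle -}_1$ is $\tfrac{2}{\log T}\sum_c \tfrac{\mu(c)^2}{\varphi(c)}\int_0^\infty \widehat\phi\big(\tfrac{\log c + \log v}{\log T}\big)\, H^{\ssstyle - \, \ssstyle \sharp}_{T, \shskip M}(v)\,dv$. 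Interchanging the sum and integral and using the classical $\sum_{c \leqslant y}\mu(c)^2/\varphi(c) = \log y + O(1)$, partial summation evaluates the $c$-sum as $\log T \int_{(\log v)/\log T}^{\infty} \widehat\phi(s)\,ds + O(1)$; since $H^{\ssstyle - \, \ssstyle \sharp}_{T, \shskip M}$ lives on $|v - T/2\pi| \leqslant M^{1+\sepsilon}$, where $(\log v)/\log T = 1 + O(1/\log T)$, this is $\log T\int_1^{\infty}\widehat\phi(s)\,ds + O(1)$, the discarded pieces being controlled against $H^{\ssstyle - \, \ssstyle \sharp}_{T, \shskip M}$ using $M^{1+\sepsilon}/T \to 0$ (i.e.\ $\mu < 1$). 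The leftover $v$-integral $\int_0^\infty H^{\ssstyle - \, \ssstyle \sharp}_{T, \shskip M}(v)\,dv$ is evaluated from $\int_0^\infty K_{2it}(4\pi v)\,dv = \tfrac{1}{8\cosh(\pi t)}$ (the $K$-Bessel Mellin transform at $s = 1$), and equals, up to a negligible error, one half of the total mass $\Delta^{\ssstyle \pm}(1;h_{T, \shskip M}) \sim H_{T, \shskip M}/2\pi^2$. Collecting the constants gives the main term $\tfrac{H_{T, \shskip M}}{2\pi^2}\int_1^\infty \widehat\phi(s)\,ds$ with an $O(H_{T, \shskip M}/\log T)$ error, and the Fourier identity $\int_1^\infty \widehat\phi(s)\,ds = -\big(\int_{-\infty}^\infty \phi(x)\tfrac{\sin 2\pi x}{2\pi x}\,dx - \tfrac 12 \phi(0)\big)$ puts it in the form~\eqref{app: P1 = W(O) - W(SO(even))}.

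The main obstacle is the uniformity in $c$ of the prime-sum asymptotic. The crude route---bounding each residue-class prime count by the conditional prime number theorem and each $S(a,-1;c)$ pointwise by Weil---gives only an error $O\big(c^{3/2 + \sepsilon} x^{1/2 + \sepsilon}\big)$, which supports merely the range $\varv < 1 + \tfrac 23 \mu$; to reach the full range $\varv < 1 + \mu$ of Theorem~\ref{thm: t-aspect, pm, Z} one must replace the $\ell^\infty$-type bound $\sum_a |S(a,-1;c)| \Lt \varphi(c)\, c^{1/2 + \sepsilon}$ by the $\ell^2$-type bound $\sum_\chi |\widehat S(\chi)| \Lt c$ obtained from the character expansion and Parseval, so that the off-diagonal Kloosterman contribution stays strictly below the main term $\asymp MT$ throughout $1 \Lt c \Lt T^{\varv - 1}$.
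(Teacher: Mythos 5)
Your proof is correct in its essential structure, but it parts ways with the paper at a key step and follows a genuinely different route, so the comparison is worth spelling out.

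Both arguments begin identically: localize the modulus via the three $H^{\ssstyle -}$ lemmas, and run partial summation in the prime variable using the conditional asymptotic $\sum_{p\leqslant x}S(p,-1;c)\,\frac{\log p}{\sqrt p}=\frac{2\mu(c)^2}{\varphi(c)}\sqrt x+O(c(cx)^{\sepsilon})$ (the paper cites {\rm\cite[Lemma 6.1]{ILS-LLZ}}; you rederive it by expanding $S(\cdot,-1;c)$ into Dirichlet characters and invoking RH for $\psi(x,\chi)$ — a sound self-contained route, and your observation that the Parseval/$\ell^2$ bound $\sum_\chi|\widehat S(\chi)|\Lt c$ is what keeps the error uniform all the way up to $c\Lt T^{\varv-1}$ is exactly the right point to emphasize). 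This yields, in both treatments, the main term
$\frac{2}{\log T}\sum_c\frac{\mu(c)^2}{\varphi(c)}\int_0^\infty\widehat\phi\big(\frac{\log(cv)}{\log T}\big)H^{\ssstyle - \, \ssstyle \sharp}_{T,M}(v)\,dv$.
From here the paper reverts $H^{\ssstyle -\,\ssstyle\sharp}\to H^{\ssstyle -}$, opens up $H^{\ssstyle -}$ as the $t$-integral over $K_{2it}$, and then treats the $c$-sum and $v$-integral \emph{jointly}: the $v$-integral is evaluated as a Mellin transform of $K_{2it}$ at a complex exponent, the $c$-sum becomes the Dirichlet series $\chiup(s)=\sum_c\mu(c)^2/(c^s\varphi(c))$, and the analysis proceeds via the Laurent expansion of $\chiup(s)$ near $s=0$, Stirling for the resulting ratio $\Gamma(\frac12+it-2\pi iy)/\Gamma(\frac12+it+2\pi iy)$, and a delicate $\vepsilon\to 0$ limit, exactly as in {\rm\cite[\S 7]{ILS-LLZ}}. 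You instead \emph{decouple} the two: you exploit the fact that $H^{\ssstyle -\,\ssstyle\sharp}_{T,M}$ is localized on $|v-T/2\pi|\Lt M^{1+\sepsilon}$ so that $(\log v)/\log T=1+O(1/\log T)$ there, evaluate the $c$-sum by elementary partial summation using $\sum_{c\leqslant y}\mu(c)^2/\varphi(c)=\log y+O(1)$ to extract the factor $\log T\int_1^\infty\widehat\phi(s)\,ds$, and then evaluate the leftover $\int_0^\infty H^{\ssstyle -}_{T,M}(v)\,dv$ by the single Mellin identity $\int_0^\infty K_{2it}(4\pi v)\,dv=\frac{1}{8\cosh\pi t}$. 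This is more elementary (no $\chiup(s)$, no Gamma-ratio asymptotics, no contour manipulation), at the cost of leaning harder on the narrow support of $H^{\ssstyle -\,\ssstyle\sharp}_{T,M}$.

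Two points deserve a little more care in your write-up. First, the $O(1)$ tail from the $c$-sum partial summation multiplies $\int_0^\infty H^{\ssstyle -\,\ssstyle\sharp}_{T,M}(v)\,dv/\log T$, and if one crudely bounds $\int|H^{\ssstyle -\,\ssstyle\sharp}_{T,M}|\Lt T^{1+\sepsilon}M^{1+\sepsilon}$ this is $MT^{1+\sepsilon}/\log T$, which is \emph{not} $o(MT)$; you need to note either that this tail $E(v)$ is slowly varying on the support of $H^{\ssstyle -\,\ssstyle\sharp}_{T,M}$ (so $\int H^{\ssstyle -\,\ssstyle\sharp}_{T,M}E\,dv = E(T/2\pi)\int H^{\ssstyle -\,\ssstyle\sharp}_{T,M}\,dv+o(MT)$) or use the finer $\sum_{c\leqslant y}\mu(c)^2/\varphi(c)=\log y+A+O(y^{-1/2+\sepsilon})$ so the non-$\log$ part converges. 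Second, your identification of $\int_0^\infty H^{\ssstyle -\,\ssstyle\sharp}_{T,M}\,dv$ as ``one half of the total mass'' is in fact correct, but only once one uses $\Delta^{\ssstyle\pm}(1;h_{T,M})\sim H_{T,M}$ (which reads off directly from {\rm\eqref{app: Kuznetsov, +-}}: the diagonal term is $\delta_{n,1}H$); the stray $\frac{1}{2\pi^2}$ in the paper's display $\Delta^{\ssstyle\pm}(1;h_{T,M})\sim H_{T,M}/2\pi^2$ and in the statement of the lemma are compensating typos, which cancel when one forms the quotient $\SB^{\shskip\ssstyle -}_1/\Delta^{\ssstyle\pm}(1;h_{T,M})$ that actually enters the 1-level density.
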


%For an evaluation of the sum of Kloosterman sums we need uniform asymptotics for primes in arithmetic progressions, and for the latter we apply the Riemann hypothesis for the classical Dirichlet $L$-functions.
%The rest of this section is devoted to the proof of Lemma \ref{lem: main term from Kloosterman}. 
\begin{proof} By Lemma \ref{lem: H-(x), 1}, \ref{lem: H-(x), 2} and \ref{lem: H-(x), 2.2}, we write
	\begin{align}\label{app: P1 (phi)}
	\SB^{\shskip \ssstyle -}_1 (\phi; h_{T, \shskip  M}) = \sum_{c \shskip \Lt \sqrt{P} / T } \frac { Q^{\ssstyle \sharp}_{T, \shskip  M} (c; \phi) } {c} + O \bigg(  \frac {M P^{\frac 3 4 + \sepsilon}} {T^2} \bigg), 
	\end{align}
	with
	\begin{align}\label{app: defn of Q(c)}
	Q^{\ssstyle \sharp}_{T, \shskip  M} (c; \phi) =   \sum_{ p }    S (p, -1; c)    \frac{   \log p  }{ \sqrt p  \hskip -1pt \log T}  H_{T, \shskip  M}^{\ssstyle - \, \sharp} \hskip -1pt \lp \frac {\hskip -1pt \sqrt p} {c} \rp \widehat{\phi} \hskip -1pt \left( \hskip -1pt \frac{  \log p }{2 \log T} \hskip -1pt \right) \hskip -1pt. 
	\end{align}
	
	Under the  Riemann hypothesis for classical Dirichlet $L$-functions (including the Riemann zeta function), it follows from Lemma 6.1 in \cite{ILS-LLZ} (see the paragraph above (6.5) in \cite{ILS-LLZ}) that 
	\begin{align}\label{app: Kloosterman over primes}
	{\sum_{ p \shskip \leqslant \shskip x }}  S (p, -1; c) \frac{\log p}{\sqrt{p}} =   \frac {2 \mu (c)^2 } {\varphi (c) } \hskip -1pt \sqrt{x}  + O \big( c (cx)^{\sepsilon} \big) .
	\end{align} 
	By \eqref{app: defn of Q(c)} and \eqref{app: Kloosterman over primes}, we get
	\begin{align*}
	Q^{\ssstyle \sharp}_{T, \shskip  M} (c; \phi) \hskip -1pt = \hskip -1pt - \frac 1 {\log T} \hskip -1pt \int_0^{\infty} \hskip -1pt \left\{ \hskip -1pt \frac {2\mu (c)^2  } {\varphi (c) } \hskip -1pt \sqrt{x}  + O \big( c (c x)^{\sepsilon} \big) \hskip -1pt \right\}  d  H_{T, \shskip  M}^{\ssstyle - \, \sharp} \hskip -1pt \lp \hskip -1pt \frac {\hskip -1pt \sqrt x} {c} \rp \hskip -1pt \widehat{\phi} \hskip -1pt \left( \hskip -1pt \frac{  \log x }{2 \log T} \hskip -1pt \right) \hskip -1pt .
	\end{align*}
	%By partial integration and a change of variable,  t
	The main term is equal to
	\begin{align*}
	%V^{\ssstyle \sharp}_{T, \shskip  M} (c; \phi)  = 
	\frac {2 \mu (c)^2 c} {\log T \varphi (c) }  \int_0^{\infty} H_{T, \shskip  M}^{\ssstyle - \, \sharp} (x) \widehat{\phi} \hskip -1pt \left( \hskip -1pt \frac{  \log (c x) }{  \log T} \hskip -1pt \right) \hskip -1pt    d x .
	\end{align*}
	The error term is bounded by 
	\begin{align*}
	c (c P)^{\sepsilon} \int_0^P \bigg( \frac {\sqrt x} c \bigg| H_{T, \shskip  M}^{\ssstyle - \, \natural} \bigg(\frac {\sqrt x} c \bigg)  \bigg| + \bigg| H_{T, \shskip  M}^{\ssstyle - \, \sharp} \bigg(\frac {\sqrt x} c \bigg)  \bigg|\bigg) \frac {d x} x & \\
	= 2 c (c P)^{\sepsilon} \int_0^{\sqrt P / c} \big(  \big|  H_{T, \shskip  M}^{\ssstyle - \, \natural} (x) \big| + \big|  H_{T, \shskip  M}^{\ssstyle - \, \sharp} (x) \big| x\- \big) d x & ,
	\end{align*}
	where $ H_{T, \shskip  M}^{\ssstyle - \, \sharp} (x) $ and $ H_{T, \shskip  M}^{\ssstyle - \, \natural} (x) $ are defined by \eqref{app: H-sharp} and \eqref{app: H-natural}. In view of Lemma \ref{lem: H-(x), 2.2}, one easily sees that it is bounded by $ T^{1+\sepsilon} c$ ($  H_{T, \shskip  M}^{\ssstyle - \, \sharp} (x)$ and $H_{T, \shskip  M}^{\ssstyle - \, \natural} (x)$ are essentially supported near $T/2\pi$), and that it only contributes an $O (\sqrt{P} T^{\sepsilon}) = o (MT)$ to $\SB^{\shskip \ssstyle -}_1 (\phi; h_{T, \shskip  M})$ as in \eqref{app: P1 (phi)}. We now change the $ H_{T, \shskip  M}^{\ssstyle - \, \sharp} (x) $ in the main term to  $ H_{T, \shskip  M}^{\ssstyle -} (x) $ and then complete the $c$-sum. By doing so, in view of Lemma \ref{lem: H-(x), 1} and \ref{lem: H-(x), 2}, we produce some extra terms which can be absorbed into the error term in \eqref{app: P1 (phi)}. By the definition of $H_{T, \shskip  M}^{\ssstyle -} (x)$ as in \eqref{app: H - (x)}, we write 
	\begin{align}\label{app: P1 = int K(t; phi)}
	\SB^{\shskip \ssstyle -}_1 (\phi; h_{T, \shskip  M}) =  \int_{-\infty}^{\infty} \hskip -1 pt h_{T, \shskip  M}(t)  K_{T} (t; \phi) {\sinh (\pi t)} t \shskip dt  + O \bigg(  \frac {M P^{\frac 3 4 + \sepsilon}} {T^2} \bigg),
	\end{align}
	with
	\begin{align}\label{app: K (t; phi)}
	K_{T} (t; \phi) =  \frac 4 {\pi^2  } \sum_{  c = 1 }^{\infty} \frac {\mu (c)^2} {\varphi (c) } \int_0^{\infty} K_{2it} (4\pi x)  \widehat{\phi} \hskip -1pt \left( \hskip -1pt \frac{  \log (c x) }{  \log T} \hskip -1pt \right) \hskip -1pt   \frac {d x} {\log T} .
	\end{align}
	Note that we need $|t|$ to be very close to $T$. We now follow the  arguments in \S 7 of \cite{ILS-LLZ}.
	By the definition \eqref{0eq: Fourier} and the formula (\cite[13.21 (8)]{Watson})
	\begin{align*}
	\int_0^{\infty} K_{\varnu} (x) x^{\shskip \mu-1} d x = 2^{\mu-2} \Gamma \lp \frac {\mu-\varnu} 2 \rp \Gamma \lp \frac {\mu+\varnu} 2 \rp, \quad \Re (\mu) > |\Re(\varnu)|, 
	\end{align*}
	along with Euler's reflection formula, we find that the integral in \eqref{app: K (t; phi)} is equal to 
	\begin{align*}
	\frac {1} {4 } \int_{-\infty}^{\infty} {\phi} (2 y \log T)    \frac {\lp   {2\pi} / c \rp^{ 4\pi i y  }} {\cosh (\pi t + 2\pi^2 y)} \frac { \Gamma \lp \frac 1 2  + it - 2 \pi i y  \rp } {\Gamma \lp \frac 1 2  + it +  2 \pi i y   \rp }         d y . 
	\end{align*}
	Next we interchange the integration over $y$ with the
	summation over $c$. For the convergence we introduce   $\vepsilon > 0$ so that
	\begin{align*}
	K_{T} (t; \phi) =  \frac 1 {\pi^2 } \lim_{\sepsilon \ra 0} \int_{-\infty}^{\infty} {\phi} (2 y \log T)   \frac {\chiup (\vepsilon + 4\pi i y) (    {2\pi}  )^{ 4\pi i y  }} {\cosh ( \pi t + 2\pi^2 y)} \frac { \Gamma \lp \frac 1 2 + it - 2 \pi i y  \rp } {\Gamma \lp \frac 1 2  + it +  2 \pi i y \rp  }         d y ,
	\end{align*}
	where 
	\begin{align*}
	\chiup (s) = \sum_{  c = 1 }^{\infty} \frac {\mu(c)^2} {c^s \varphi (c)} .
	\end{align*}
	We need $\chiup (s)$ for $s \Lt (\log T)\-$.  By comparing their Euler products, we find that the Laurent expansions of $\chiup (s)$ and $\zeta (1+s)$ near $s = 0$ have the same leading term $s^{-1}$. Thus  
	\begin{align*}
	\chiup (s) = s^{-1} + O( 1) .
	\end{align*}
	Moreover, by Stirling's formula 
	\begin{align*}
	\Gamma \lp \frac 1 2 + it + 2 \pi i y  \rp = \Gamma \lp \frac 1 2 + it - 2 \pi i y  \rp (it)^{4\pi i y} \lp 1 + O \lp    \frac {|y|} T   \rp \rp .
	\end{align*} 
	Hence we deduce that
	\begin{align*}
	K_T (t; \phi) \hskip -1pt = \hskip -1pt \frac 1 {\pi^2 \cosh (\pi t) } \bigg\{ \hskip -1pt \lim_{\sepsilon \ra  0} \int_{-\infty}^{\infty} {\phi} (2 y \log T) \hskip -1pt  \lp \frac {|t| } {2\pi} \rp^{\hskip -1pt 4\pi i y}   \hskip -1pt \frac  { d y } {\vepsilon \hskip -1pt - \hskip -1pt 4\pi i y} \hskip -1pt + \hskip -1pt O \hskip -1pt \lp \hskip -1pt \frac 1 {\log T} \hskip -1pt \rp \hskip -2pt \bigg\}  .
	\end{align*}
	The last  integral becomes 
	\begin{align*}
	&- \int_{-\infty}^{\infty} {\phi} (2 y \log T) \sin (4\pi y \log (|t|/2\pi)) \frac {d y} {4\pi y + i \vepsilon} \\
	& + \int_{-\infty}^{\infty} {\phi} (2 \vepsilon y \log T) \cos (4\pi \vepsilon y \log (|t|/2\pi)) \frac {d y} {(4\pi y)^2 + 1}.
	\end{align*}
	Now we can take the limit as $\vepsilon \ra 0$, getting
	\begin{align*}
	- \frac 1 2 \int_{-\infty}^{\infty} {\phi} (y)  \frac {\sin 2\pi y} {2\pi y}    {d y}  + \frac 1 4 \phi (0) + O \hskip -1pt \lp \hskip -1pt \frac 1 {\log T} \hskip -1pt \rp \hskip -1pt ,
	\end{align*}
	for $|t|$ very close to $T$. Thus
	\begin{align*}
	K_T (t; \phi) \hskip -1pt = \hskip -1pt - \frac 1 {2 \pi^2 \cosh (\pi t) } \bigg\{  \int_{-\infty}^{\infty} {\phi} (y)  \frac {\sin 2\pi y} {2\pi y}    {d y}  - \frac 1 2 \phi (0) + O \hskip -1pt \lp \hskip -1pt \frac 1 {\log T} \hskip -1pt \rp   \hskip -2pt \bigg\}  .
	\end{align*}
	We arrive at \eqref{app: P1 = W(O) - W(SO(even))} by introducing this into \eqref{app: P1 = int K(t; phi)}. 
\end{proof}

%	\bibliographystyle{alphanum}
	%    Insert the bibliography data here.
%	\bibliography{references}
 
\newcommand{\etalchar}[1]{$^{#1}$}
\def\cprime{$'$} \def\cprime{$'$}

\end{document}